\documentclass[twoside,a4paper,reqno,11pt]{amsart} 
\usepackage[top=28mm,right=30mm,bottom=28mm,left=30mm]{geometry}

\usepackage{amsfonts, amsmath, amssymb, mathrsfs, bm, latexsym, stmaryrd, array, mathtools, bold-extra}

\usepackage{color}
\usepackage{xcolor}

\usepackage[pagebackref]{hyperref}

\usepackage{bbm}
\usepackage[all,cmtip]{xy}

\usepackage{enumerate}
\usepackage{comment}
\usepackage[shortlabels]{enumitem}

\newtheorem{theorem}{Theorem} 
\newtheorem*{conj*}{Conjecture}

\newtheorem{hypothesis}[theorem]{Hypothesis}

\newtheorem{thm}{Theorem}[section] 
\newtheorem{prop}[thm]{Proposition} 
\newtheorem{lem}[thm]{Lemma}
\newtheorem{cor}[thm]{Corollary} 

\newtheorem{hyp}[thm]{Hypothesis}

\theoremstyle{definition}
\newtheorem{rem}[thm]{Remark}

\newtheorem{ex}[thm]{Example}

\newtheorem{problem}[theorem]{Problem}

\newtheorem{conjecture}[theorem]{Conjecture}

\theoremstyle{definition}

\theoremstyle{remark}

\DeclareMathOperator{\Irr}{Irr}

\DeclareMathOperator{\GL}{GL}

\DeclareMathOperator{\kernel}{Ker}

\newcommand{\Lin}{{\mathrm{ Lin}}}

\newcommand{\la}{\lambda}

\DeclareMathOperator{\Sym}{S}

\newcommand{\HH}{\mathcal{H}}

\newcommand{\Centralizer}{\mathrm{C}}
\newcommand{\Center}{\mathrm{Z}}

\def\norm#1#2{{\bf N}_{#1}(#2)}

\numberwithin{equation}{section}

\newcommand{\Alt}{{\mathrm {A}}}

\def\irr#1{{\rm Irr}(#1)}
\def\C{{\Bbb C}}

\begin{document}

\title[A conjugacy class that is a difference]{Groups with a conjugacy class that is the difference of two normal subgroups}

\author[M. L. Lewis]{Mark L. Lewis}
\address{Department of Mathematical Sciences, Kent State University, Kent, OH 44242, USA}
\email{lewis@math.kent.edu}

\author[L. Morotti]{Lucia Morotti}
\address{Department of Mathematics, University of York, York, YO10 5DD, UK}
\email{lucia.morotti@york.ac.uk} 

\author [E. Pacifici]{Emanuele Pacifici}
\address{Dipartimento di Matematica e Informatica U. Dini, Universit\`a degli Studi di Firenze, Viale Morgagni 67/A, 50134 Firenze, Italy.}
\email{emanuele.pacifici@unifi.it}

\author [L. Sanus]{Lucia Sanus}
\address{Departament de Matem\`atiques, 
Universitat de Val\`encia,
46100 Burjassot, Val\`encia, Spain. }
\email{lucia.sanus@uv.es}

\author[H. P. Tong-Viet]{Hung P. Tong-Viet}
\address{Department of Mathematics and Statistics, Binghamton University, Binghamton, NY 13902-6000, USA}
\email{htongvie@binghamton.edu}

\renewcommand{\shortauthors}{M. L. Lewis et al.}
\begin{abstract}  
We consider finite groups having a conjugacy class that is the difference of two normal subgroups.  That is, suppose $G$ is a group and $M$ and $N$ are normal subgroups so that $N < M$, and suppose that there is an element $g \in G$ so that the conjugacy class of $g$ is $M \setminus N$.  We find a character-theoretic characterization of this condition, and we determine some structural properties of groups with such a conjugacy class.  If we add the condition that $M/N$ is the unique minimal normal subgroup of $G/N$, then we obtain a generalization of a result by S.M. Gagola.
\end{abstract}

\thanks{While starting to work on this paper the second author was working at the Mathematisches Institut of the Heinrich-Heine-Universit\"{a}t D\"usseldorf. The second author was supported by the Royal Society grant URF$\backslash$R$\backslash$221047. The third author is partially supported by InDaM-GNSAGA and by the italian project PRIN 2022-2022PSTWLB - Group Theory and Applications - CUP B53D23009410006. The research of the fourth author is supported by Ministerio de Ciencia e Innovación (PID2022-137612NB-I00 funded by MCIN/AEI/10.13039/501100011033 and `ERDF A way of making Europe') and Generalitat Valenciana CIAICO/2021/163. }

\subjclass[2010]{Primary 20C15; Secondary 20D06, 20D10}



\keywords{Gagola characters, Camina elements, anti-central elements}

\maketitle

\centerline{\it In honor of Alan Camina on his 85th birthday}

\section{Introduction}

Throughout this paper, all groups are finite groups unless otherwise stated.  Let $G$ be a group and let $C$ be a conjugacy class of $G$; then $M = \langle C \rangle$ is a normal subgroup of $G$ and, for any normal subgroup $N$ with $N < M$, we see that $C \subseteq M \setminus N$.  

In this paper, we consider the conditions under which we have equality between $C$ and $M \setminus N$.  When $C = M \setminus N$, we will prove in Lemma \ref{lem:M le G} that $M/N$ is a chief factor for $G$ and that in fact $N$ is the unique normal subgroup of $G$ so that $M/N$ is a chief factor of $G$.  Our goal in this paper is to find an equivalent condition on the irreducible characters of $G$.

Given a group $G$ and an element $g \in G$, we denote by $\irr G$ the set of the irreducible complex characters of $G$, and by $g^G$ the conjugacy class of $g$ in $G$.  When $N$ is a normal subgroup of $G$, we write $\irr {G \mid N}$ for the set of irreducible characters of $G$ whose kernels do not contain $N$.  We obtain the following character-theoretic characterization of the aforementioned situation.

\begin{theorem}\label{intro-standard conj}
Let $G$ be a group, let $N < M$ be normal subgroups of $G$, and let $g \in M \setminus N $.  Then $g^G = M \setminus N$ if and only if every character in $\irr {G/N}$ is constant on $M/N \setminus \{ N \}$ and every character in $\irr {G \mid N}$ vanishes on $g$.
\end{theorem}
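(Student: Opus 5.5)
The plan is to convert the condition $g^G = M\setminus N$ into a statement about class sizes and centralizer orders, and then use the second orthogonality relation twice: once for $G$ and once for $G/N$. Recall that $\sum_{\chi\in\irr G}|\chi(g)|^2 = |\Centralizer_G(g)|$, and the part of this sum over $\irr{G/N}$ equals $|\Centralizer_{G/N}(gN)|$. Hence
\[
\sum_{\chi\in\irr{G\mid N}}|\chi(g)|^2 = |\Centralizer_G(g)| - |\Centralizer_{G/N}(gN)|.
\]
So every character in $\irr{G\mid N}$ vanishes on $g$ exactly when $|\Centralizer_G(g)| = |\Centralizer_{G/N}(gN)|$. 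Equivalently, $|g^G| = |N|\,|(gN)^{G/N}|$, which holds iff $g^G$ is a union of full $N$-cosets, i.e. iff $gN \subseteq g^G$. This is the standard Camina-type equivalence.

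\emph{Forward direction.} Assume $g^G = M\setminus N$. Then $gN\subseteq M\setminus N = g^G$, so by the equivalence above every $\chi\in\irr{G\mid N}$ vanishes on $g$. Moreover the image of $g^G$ in $G/N$ is $(gN)^{G/N} = (M/N)\setminus\{N\}$. Thus all nonidentity elements of $M/N$ are conjugate in $G/N$, and every class function of $G/N$, in particular every $\chi\in\irr{G/N}$, is constant on $M/N\setminus\{N\}$.

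\emph{Converse.} Assume the two character conditions hold. The vanishing condition gives $gN\subseteq g^G$, so $g^G$ is the full preimage of $(gN)^{G/N}$. It therefore suffices to show that $(gN)^{G/N} = M/N\setminus\{N\}$, working in $\bar G = G/N$ with $\bar M = M/N$. Take any $\bar x\in\bar M\setminus\{1\}$. Since the irreducible characters of $\bar G$ form a basis of the class functions, and each of them takes the same value at $\bar x$ as at $\bar g$, every class function does too. Applying this to the characteristic function of $\bar g^{\bar G}$ shows that $\bar x$ is conjugate to $\bar g$. Hence $(gN)^{G/N} = \bar M\setminus\{1\}$, and pulling back gives $g^G = M\setminus N$.

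The only point needing care is the pullback in the converse: $g^G\supseteq gN$ together with $G$-invariance shows that $g^G$ is a union of $N$-cosets, namely those $yN$ with $yN$ conjugate to $gN$. This is exactly the preimage claimed. Everything else is routine orthogonality.
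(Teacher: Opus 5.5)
Your proof is correct and uses the same decomposition as the paper's proof. The constancy of $\irr{G/N}$ on $M/N\setminus\{N\}$ is matched with conjugacy of the nontrivial cosets via the class-function basis (as in Lemma~\ref{min standard conj}), and the vanishing of $\irr{G\mid N}$ at $g$ is matched with $gN\subseteq g^G$. The only difference is that you derive this second equivalence yourself from the second orthogonality relation and a class-size count, whereas the paper cites Lemma~3 of \cite{flat}, so your version is self-contained.
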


We will see that, in the situation of the previous theorem, $M/N$ is an elementary abelian $p$-group for a prime $p$, and that $g$ has $p$-power order.  We will also show that, while there are nonsolvable groups $G$ that satisfy this condition, in fact $M$ must be solvable.

\begin{theorem}\label{intro-standard res}
Let $G$ be a group, let $N < M$ be normal subgroups of $G$, and let $g \in M \setminus N$ be such that $g^G = M \setminus N$.   Then there is a prime $p$ so that the following are true:
\begin{enumerate}[{\rm (a)}] 
\item  $g$ has $p$-power order, so every element in $M \setminus N$ has $p$-power order.
\item  $M$ is solvable and has a normal $p$-complement.
\end{enumerate}
\end{theorem}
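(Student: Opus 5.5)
The plan is to first show that $M/N$ is an elementary abelian $p$-group, and then transfer $p$-power order from $M/N$ to $g$ itself through a Camina-type centralizer count. Since $g^G = M\setminus N$ and $N<M$, every element of $M\setminus N$ is conjugate to $g$. All these elements therefore have the same order, and their images in $G/N$ are all conjugate. So every nonidentity element of $M/N$ has the same order, which forces that order to be a prime $p$. In particular $M/N$ is a $p$-group of exponent $p$; it is a minimal normal subgroup of $G/N$ by Lemma \ref{lem:M le G}, hence elementary abelian.

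Next I show that $g$ has $p$-power order. The key identity is $|C_G(g)| = |G|/|M\setminus N| = |G|/(|M|-|N|)$. I would compare this with the centralizer of $gN$ in $G/N$. The class of $gN$ is $(M/N)\setminus\{N\}$, so $|C_{G/N}(gN)| = |G:N|/(|M:N|-1)$. Since $|M|-|N| = |N|(|M:N|-1)$, we get $|C_G(g)| = |C_{G/N}(gN)|$, i.e. $C_G(g)N/N = C_{G/N}(gN)$ and $C_G(g)\cap N = 1$. Write $g = g_pg_{p'}$. The image $gN$ has order $p$, so $g_{p'}\in N$. But $g_{p'}$ is a power of $g$, hence centralizes $g$, so $g_{p'}\in C_G(g)\cap N = 1$. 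Thus $g$ is a $p$-element, and so is every element of $M\setminus N$.

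For (b), the aim is to show that $N$ has a normal $p$-complement, and then that this complement is normal in $M$ with $p$-group quotient. The counting above gives more generally that for every $x\in M\setminus N$ we have $C_G(x)\cap N=1$; equivalently, $x$ acts on the coset $xN$ transitively by conjugation, so $xN\subseteq x^N$ (the Camina-type condition). Fix a $p$-element $x\in M\setminus N$ and a Sylow $p$-subgroup $P$ of $M$ containing $x$. Then $\langle x\rangle$ acts fixed-point-freely on $N$, since $C_N(x)=1$. A nontrivial fixed-point-free automorphism of prime order forces $N$ to be nilpotent by Thompson's theorem. Here I would apply this to $x^{p^{k-1}}$, which has order $p$, lies in $M$, and may lie in $N$, so care is needed. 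If $x$ itself has order $p$ then $N$ is nilpotent directly. Otherwise, I would use that the elements of $M\setminus N$ are all conjugate to $g$, so they have a common order $p^k$. Every element $y\in M\setminus N$ has $y^p \in N$, and $y$ acts fixed-point-freely on $N$, so $\langle y\rangle\cap N = \langle y^p\rangle$ must have $C_N(y)\supseteq\langle y^p\rangle$. Hence $y^p = 1$, so $k=1$ and Thompson applies. So $N$ is nilpotent, and $M$ is solvable because $M/N$ is abelian.

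Finally, let $K = O_{p'}(N)$, the Hall $p'$-subgroup of the nilpotent group $N$; it is characteristic in $N$, hence normal in $G$. Then $M/K$ is a $p$-group, because $N/K$ is a $p$-group and $M/N$ is a $p$-group, and $K$ is a $p'$-group. So $K$ is a normal $p$-complement of $M$. The main obstacle is the step showing that $N$ is nilpotent, namely justifying that the fixed-point-free element has prime order. The argument that $y^p\in C_N(y)=1$ handles this, but I would double-check that $C_G(x)\cap N=1$ holds for all $x\in M\setminus N$, which follows from the same centralizer count applied to the conjugates of $g$.
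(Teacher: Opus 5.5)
\textbf{There is a genuine gap, at the step you flagged yourself.} The identity $|\mathrm{C}_G(g)| = |\mathrm{C}_{G/N}(gN)|$ is correct, but you then conclude $\mathrm{C}_G(g)N/N=\mathrm{C}_{G/N}(gN)$ and $\mathrm{C}_G(g)\cap N=1$. Equal orders do not give this. One only has $|\mathrm{C}_G(g)N/N| = |\mathrm{C}_G(g)|/|\mathrm{C}_G(g)\cap N|\le|\mathrm{C}_{G/N}(gN)|$. What the count actually shows is that the preimage $C$ of $\mathrm{C}_{G/N}(gN)$ acts transitively on $gN$ with stabilizer $\mathrm{C}_G(g)$, so $|C|=|N|\,|\mathrm{C}_G(g)|$; the subgroup $\mathrm{C}_G(g)N$ can be proper in $C$.

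The claim already fails for $G=\mathrm{SL}_2(3)$, $M=Q_8$, $N=\mathrm{Z}(G)$. An element $g$ of order $4$ has $g^G=M\setminus N$, yet $\mathrm{C}_G(g)=\langle g\rangle\supseteq N$. In (a) this is easily repaired. Since $g_{p'}\in N$, we get $g_p=g\,g_{p'}^{-1}\in gN\subseteq M\setminus N$, so $g_p$ is conjugate to $g$ and $g$ is a $p$-element. This is essentially the paper's argument: it takes a Sylow $p$-subgroup $P$ of $M$, notes $M=NP$ so $P\not\subseteq N$, and conjugates $g$ into $P$.

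Part (b), however, rests entirely on $\mathrm{C}_N(x)=1$ for $x\in M\setminus N$, and the conclusions you draw from it are false. The $\mathrm{SL}_2(3)$ example has elements of order $p^2=4$ in $M\setminus N$, so ``$y^p=1$'' fails. Worse, $N$ need not be nilpotent. Take $G=\mathbb{F}_3^2\rtimes\mathrm{SL}_2(3)$ with the natural action, $M=\mathbb{F}_3^2\rtimes Q_8$ and $N=\mathbb{F}_3^2\rtimes\langle -I\rangle$.
\begin{itemize}
\item Each $x\in Q_8\setminus\{\pm I\}$ fixes no nonzero vector, so the whole coset $x\mathbb{F}_3^2$ lies in the class of $x$.
\item The six elements of order $4$ are conjugate in $\mathrm{SL}_2(3)$.
\item Hence $M\setminus N$ is a single class of size $54$, while $N$ is a Frobenius group of order $18$.
\end{itemize}
So no route through Thompson's theorem applied to $N$ can work. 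What survives of your idea is only that $\mathrm{C}_N(x)$ is a $p$-group for $x\in M\setminus N$, since $xn$ must be a $p$-element for every $n\in N$. That is far too weak.

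The missing ingredient is a way to get a normal $p$-complement (and solvability) of $N$ without nilpotence. The paper does this with characters. By Theorem~\ref{standard conj}(2), every $\psi\in\irr{G\mid N}$ vanishes at the $p$-element $g$, so $p\mid\psi(1)$ by Remark~4.1 of \cite{DPSS}. Since $\irr{G\mid N'}\subseteq\irr{G\mid N}$, Berkovich's theorem (Theorem~D of \cite{IK}) shows that $N$ is solvable with a normal $p$-complement. Your final paragraph then goes through, with $K$ taken to be that normal $p$-complement.
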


Next, we show that $2$-groups are the only $p$-groups for which this situation can occur.

\begin{theorem} \label{intro-conj p groups}
Let $G$ be a $p$-group for some prime $p$, and let $N < M$ be normal subgroups of $G$ with $g \in M \setminus N$ so that $g^G = M \setminus N$.  Then $p = 2$ and $|M:N| = 2$.
\end{theorem}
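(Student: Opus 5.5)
Our plan is a counting argument in the $p$-group $G$. Since $g^G = M \setminus N$, the class size is $|g^G| = |M| - |N| = |N|(|M:N| - 1)$. In a $p$-group every class size is a power of $p$, so $|N|(|M:N|-1)$ is a power of $p$. Write $|M:N| = p^a$ with $a \ge 1$ (this is legitimate since $M/N$ is a nontrivial $p$-group; by the remark after Theorem \ref{intro-standard conj} it is in fact elementary abelian). Then $p^a - 1$ divides a power of $p$. Since $\gcd(p^a-1,p)=1$, we must have $p^a - 1 = 1$, that is, $p^a = 2$. Hence $p = 2$ and $|M:N| = 2$.

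One subtlety needs checking: we need $N \neq 1$ or, if $N = 1$, the argument still works. If $N=1$ we have $|g^G| = |M|-1$, and the same reasoning applies with $|N|=1$: $p^a-1$ is a power of $p$, forcing $p^a=2$. So no case split is needed.

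The case $N = 1$ would in fact force $|M| = 2$, which is consistent, since then $M \le \Center(G)$ and $g$ is central, so $|g^G|=1$. There is no real obstacle here. The only point requiring care is that $|g^G|$ is a power of $p$. This holds because $|g^G| = |G:\Centralizer_G(g)|$ divides $|G|$, which is a $p$-power. We do not need any character theory or Lemma \ref{lem:M le G} beyond the observation that $M/N$ is a nontrivial $p$-group.
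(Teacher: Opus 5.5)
Your proof is correct and is essentially the paper's argument: the paper also writes $|G:\Centralizer_G(g)| = |M|-|N| = p^b(p^{a-b}-1)$ and concludes that $p^{a-b}-1=1$ because it must be a power of $p$. The only difference is that the paper first treats the case $M \not\le G'$ separately via Lemma~\ref{lem:M le G}(a). That case split is unnecessary, since your counting argument applies uniformly.
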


We now return to the situation of a general group.  

\begin{theorem} \label{intro-mutual}
Let $G$ be a group, and let $N$ be a normal subgroup of $G$.  Then the following are equivalent:
\begin{enumerate}[$(1)$]
\item There is a normal subgroup $M$ of $G$ with $N < M$ and an element $g \in M \setminus N$ such that $g^G = M \setminus N$ and $|\irr {G/N \mid M/N}|= 1$. 
\item There is an element $g \in G \setminus N$, and $\chi \in \irr G$ with $N \le \ker \chi$, such that $\chi$ vanishes on all of the $G$-conjugacy classes in $G \setminus N$ except $g^G$ and every character in $\irr {G \mid N}$ vanishes on $g$.
\item There is normal subgroup $M$ of $G$ with $N < M$, and a unique $\chi \in \irr G$ that does not vanish on $M \setminus N$ and does not have the elements of $M \setminus N$ in its kernel.
\end{enumerate}
\end{theorem}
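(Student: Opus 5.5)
We prove $(1)\Rightarrow(2)\Rightarrow(3)\Rightarrow(1)$. Throughout we use the column orthogonality relations in $G$ and in $\bar G=G/N$. We also use the fact that characters of $G/M$ take the value $\psi(1)$ on $M$. The only external input is Theorem~\ref{intro-standard conj}, used in the first step.

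\emph{$(1)\Rightarrow(2)$.} Let $\chi$ be the unique character in $\irr{G/N\mid M/N}$. By Theorem~\ref{intro-standard conj}, every character in $\irr{G\mid N}$ vanishes on $g$. Moreover, $\irr{G/N}=\irr{G/M}\cup\{\chi\}$.

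First, $\chi$ vanishes on $G\setminus M$. Take $x\in G\setminus M$ and apply column orthogonality in $\bar G$ to the columns of $xN$ and $gN$. Since $\psi(g)=\psi(1)$ for $\psi\in\irr{G/M}$, the contribution of $\irr{G/M}$ is $\sum_{\psi\in\irr{G/M}}\psi(x)\psi(1)$. This sum is $0$ because $x\notin M$. Hence $\chi(x)\overline{\chi(g)}=0$.

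Second, $\chi(g)\neq0$. The class of $gN$ in $\bar G$ is $(M\setminus N)/N$, which has size $|M:N|-1$. So
\[
|C_{\bar G}(gN)|=\frac{|G:N|}{|M:N|-1}>|G:M|.
\]
On the other hand, column orthogonality gives $|C_{\bar G}(gN)|=|G:M|+|\chi(g)|^2$, which forces $\chi(g)\neq0$.

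Since every element of $M\setminus N$ lies in $g^G$, the character $\chi$ vanishes on every $G$-class in $G\setminus N$ other than $g^G$. This is (2).

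\emph{$(2)\Rightarrow(3)$.} This is the main step. We must produce $M$ and show that $M\setminus N$ is a single class. Work in $\bar G$, where $\chi$ vanishes off $\{1\}\cup \bar g^{\bar G}$.

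For each $\psi\in\irr{\bar G}$ with $\psi\ne\chi$, the relation $\langle\chi,\psi\rangle=0$ reads
\[
\chi(1)\psi(1)+|\bar g^{\bar G}|\,\chi(g)\overline{\psi(g)}=0.
\]
Taking $\psi=1$ gives $\chi(1)=-|\bar g^{\bar G}|\chi(g)$, so $\chi(g)\neq0$. Substituting back gives $\psi(g)=\psi(1)$ for every $\psi\neq\chi$. (If $\chi=1_{\bar G}$, the vanishing condition forces $\bar G=\{1\}\cup\bar g^{\bar G}$; this degenerate case is handled directly in the same way.)

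Let $M\ge N$ be the preimage of $\bigcap_{\psi\ne\chi}\ker\psi$. Then $g\in M$, so $N<M$, and $\chi$ is the only character of $\bar G$ whose kernel does not contain $M/N$.

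Next, every nontrivial $x N\in M/N$ is conjugate to $gN$. Column orthogonality against the identity column gives
\[
\chi(x)\chi(1)=-\sum_{\psi\ne\chi}\psi(1)^2\neq0,
\]
so $\chi(x)\neq0$, and hence $xN\in\bar g^{\bar G}$.

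To lift this to $G$, use that all of $\irr{G\mid N}$ vanishes on $g$. This gives $|C_G(g)|=|C_{\bar G}(gN)|$, so $g^G$ is the full preimage of $\bar g^{\bar G}$. Therefore $g^G=M\setminus N$.

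Now (3) follows. Characters in $\irr{G/N}$ other than $\chi$ contain $M$ in their kernels. Characters in $\irr{G\mid N}$ vanish on $g^G=M\setminus N$. Finally, $\chi$ neither vanishes on $M\setminus N$ nor contains it in its kernel. The same argument also yields (1) directly.

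\emph{$(3)\Rightarrow(1)$.} Let $\psi\in\irr{G\mid N}$. If $\ker\psi\supseteq M\setminus N$, then $\ker\psi\supseteq\langle M\setminus N\rangle=M\supseteq N$, which is impossible. So each such $\psi$ other than $\chi$ vanishes on $M\setminus N$.

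Next, $\chi\in\irr{G/N}$. Otherwise every character in $\irr{G/N}$ would contain $M\setminus N$ in its kernel, giving $M\le N$, a contradiction. Hence $\irr{G/N\mid M/N}=\{\chi\}$.

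For $x\in M\setminus N$, orthogonality with the identity column gives $|G:M|+\chi(x)\chi(1)=0$. So $\chi$ is constant on $M\setminus N$, with the real value $-|G:M|/\chi(1)$.

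Now take $x,y\in M\setminus N$. The orthogonality sum for the columns of $x$ and $y$ equals $|G:M|+\chi(x)\chi(y)>0$, since $\chi(x)=\chi(y)$ is real. A nonzero sum means $x$ and $y$ are conjugate. Thus $M\setminus N=g^G$, which is (1).

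I expect the main obstacle to be the step $(2)\Rightarrow(3)$, in particular identifying $M$ as the intersection of the other kernels and lifting from $\bar G$ to $G$ via the vanishing of $\irr{G\mid N}$ on $g$.
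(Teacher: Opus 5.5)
Your implications $(1)\Rightarrow(2)$ and $(2)\Rightarrow(3)$ are correct. However, $(3)\Rightarrow(1)$ has a gap exactly where $|\irr{G/N\mid M/N}|=1$ has to come out.

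The sentence ``Otherwise every character in $\irr{G/N}$ would contain $M\setminus N$ in its kernel'' does not follow from (3). For $\psi\in\irr{G/N}$ with $\psi\ne\chi$, hypothesis (3) only says that $\psi$ \emph{either} vanishes identically on $M\setminus N$ \emph{or} contains $M\setminus N$ in its kernel, and you never rule out the first alternative. Your ``Hence $\irr{G/N\mid M/N}=\{\chi\}$'' rests on the same unproved claim.

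The missing fact is the following. Write $\bar G=G/N$ and $\bar M=M/N\ne 1$; then no $\psi\in\irr{\bar G}$ can vanish on all of $\bar M\setminus\{1\}$. Otherwise $\psi_{\bar M}=(\psi(1)/|\bar M|)\rho_{\bar M}$ has $1_{\bar M}$ as a constituent. By Clifford's theorem every constituent of $\psi_{\bar M}$ is then trivial, i.e.\ $\bar M\le\ker\psi$, contradicting $\psi(x)=0\ne\psi(1)$ for $x\in\bar M\setminus\{1\}$. The paper inserts precisely this argument in its proof of (3)$\Rightarrow$(2) of Theorem~\ref{mutual 1}. It gets $\chi\in\irr{G/N}$ from column orthogonality at $gN$ against the identity column: the $\irr{G/M}$-part contributes $|G:M|>0$, so some character of $G/N$ outside $\irr{G/M}$ is nonzero at $g$, and that can only be $\chi$.

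A Clifford-free repair in your own style also works. Your last two paragraphs are valid as they stand, since characters vanishing on $M\setminus N$ contribute nothing to those column sums, so you do get $g^G=M\setminus N$ and $\chi(g)=-|G:M|/\chi(1)$. Then Theorem~\ref{intro-standard conj} and $\chi(g)\ne 0$ give $\chi\in\irr{G/N}$. Comparing $|\Centralizer_G(g)|=|G:M|+|\chi(g)|^2$ with $|G|/(|M|-|N|)$ yields $\chi(1)^2=|G:N|-|G:M|=\sum_{\psi\in\irr{G/N\mid M/N}}\psi(1)^2$, which forces $\irr{G/N\mid M/N}=\{\chi\}$.

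Apart from this, your route is genuinely different from the paper's. The paper derives the statement from Theorem~\ref{mutual 1} (for $|G/N|>2$) and Proposition~\ref{degenerate} (for $|G/N|=2$). Its implications (4)$\Rightarrow$(3) and (2)$\Rightarrow$(5) lean on Gagola's Theorem~\ref{gagola}: from (2) it first recognizes $\chi$ as a Gagola character of $G/N$ and then takes $M/N$ to be the unique minimal normal subgroup.

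Your $(2)\Rightarrow(3)$ avoids Gagola's theorem altogether. You define $M$ as the preimage of $\bigcap_{\psi\ne\chi}\ker\psi$, use only inner products and column orthogonality, and lift to $G$ via $|\Centralizer_G(g)|=|\Centralizer_{\bar G}(gN)|$. This makes the argument self-contained and covers $|G/N|=2$ almost uniformly. What the paper's detour buys is extra structure: uniqueness of $M/N$ as minimal normal subgroup of $G/N$, and the Gagola/Camina-pair description in condition (4) of Theorem~\ref{mutual 1}.
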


As shown by Proposition~\ref{degenerate}, in the case when $G/N$ has order $2$ (and $M=G$) the three equivalent conditions of the above theorem are also equivalent to the fact that $G$ is either of order $2$ or a Frobenius group with kernel $N$ and complements of order $2$.

We also are able to characterize some properties of the group and character for the situation considered in Theorem \ref{intro-mutual}.  With this in mind, we make the following hypothesis:

\begin{hypothesis}\label{Hypothesis 1}  
Let $G$ be a group and let $N < M$ be normal subgroups of $G$ with $g \in M \setminus N$.  Assume that  $g^G = M \setminus N$ and $\irr {G/N \mid M/N} = \{ \chi\}$.
\end{hypothesis}

We now obtain some arithmetical results about groups that satisfy Hypothesis \ref{Hypothesis 1}.  Let $p$ be a prime.  When $n$ is an integer, we write $n_p$ for the largest power of $p$ that divides $n$ and we set $n_{p'}=n/n_p$.

\begin{theorem}\label{intro-mutual 1}
Assume Hypothesis $\ref{Hypothesis 1}$.  Then there is a prime $p$ so that the following are true:
\begin{enumerate}[{\rm (a)}]
\item  $\displaystyle |\Centralizer_G (g)| = \frac {|G:M| (|G:M| + \chi (1)^2)}{\chi (1)^2} = |\Centralizer_{G/N} (gN)| = |G/N|_p$.  In particular, $\Centralizer_G (g)$ is a $p$-group.
\item  $\displaystyle \chi (x) = \left\{ \begin{array}{ll}|G:M|_{p'} \sqrt {|G:M|_p} & {\rm if~~} x \in N \cr
	\displaystyle -\frac {|G:M|}{\chi (1)} = -\sqrt{|G:M|_p} & {\rm if~~} x \in g^G = M \setminus N \cr 
	0 & {\rm if~~} x \in G \setminus M .\end{array} \right. $ 
\end{enumerate}
\end{theorem}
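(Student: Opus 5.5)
Throughout I would work in $\bar G = G/N$, with $\bar M = M/N$ and $\bar g = gN$, and write $m = |M:N|$, $k = |G:M|$. The plan has four steps: derive the centralizer formula from column orthogonality, identify the values of $\chi$, use a Sylow argument to show that the relevant quotient is a $p$-power, and then read off (a) and (b).

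\textbf{Step 1: the centralizer formula.} By Theorem~\ref{intro-standard res}, $g$ is a $p$-element, and I will take $p$ to be that prime. The image of $g^G = M\setminus N$ in $\bar G$ is $\bar M\setminus\{1\}$. Hence
\[
|G:\Centralizer_G(g)| = |N|(m-1) \quad\text{and}\quad |\bar G:\Centralizer_{\bar G}(\bar g)| = m-1,
\]
so $|\Centralizer_G(g)| = |\Centralizer_{\bar G}(\bar g)|$. Alternatively, this follows from the second orthogonality relation, since every character in $\irr{G\mid N}$ vanishes on $g$ by Theorem~\ref{intro-standard conj}.

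Now $\irr{\bar G} = \irr{\bar G/\bar M}\cup\{\chi\}$, so the sum of squares of degrees gives
\[
km = k + \chi(1)^2, \qquad\text{that is,}\qquad \chi(1)^2 = k(m-1).
\]
Apply column orthogonality to the columns of $1$ and $x$.
\begin{itemize}
\item For $x\notin \bar M$ we get $\sum_{\psi\in\irr{\bar G/\bar M}}\psi(1)\psi(x)=0$, which forces $\chi(1)\chi(x)=0$. So $\chi$ vanishes off $M$.
\item For $x=\bar g$ we get $k + \chi(1)\chi(g) = 0$, so $\chi(g) = -k/\chi(1)$.
\end{itemize}
Then
\[
|\Centralizer_{\bar G}(\bar g)| = \sum_{\psi}|\psi(\bar g)|^2 = k + \frac{k^2}{\chi(1)^2} = \frac{k(k+\chi(1)^2)}{\chi(1)^2},
\]
which is the first equality in (a).

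\textbf{Step 2: the value of $\chi$ on $g$.} Put $q = k/(m-1)$. Then $\chi(1)^2 = q(m-1)^2$ and $\chi(g)^2 = k^2/\chi(1)^2 = q$. Since $\chi(g)$ is rational and an algebraic integer, $q$ is a perfect square and $\chi(g) = -\sqrt q$. Because $\chi$ is constant on $\bar M\setminus\{1\}$ (Theorem~\ref{intro-standard conj}), we get
\[
\chi_{\bar M} = \sqrt q\,(\rho_{\bar M} - 1_{\bar M}),
\]
where $\rho_{\bar M}$ is the regular character of $\bar M$.

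\textbf{Step 3 (main obstacle): $q$ is a power of $p$.} I expect this to be the crux. Let $r\neq p$ be a prime and let $R\in\mathrm{Syl}_r(\bar G)$. Since $\bar M$ is a $p$-group, $R\cap\bar M = 1$. As $\chi$ vanishes off $\bar M$,
\[
[\chi_R, 1_R] = \frac{\chi(1)}{|R|}
\]
is an integer, so $|R| = (q(m-1)m)_r = q_r (m-1)_r$ divides $\chi(1)_r = (m-1)_r\sqrt{q_r}$. This forces $q_r = 1$. Therefore $q$ is a $p$-power. Since $m-1$ is prime to $p$, we conclude that $q = k_p$ and $m - 1 = k_{p'}$.

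\textbf{Step 4: reading off (a) and (b).} Now
\[
|\Centralizer_{\bar G}(\bar g)| = \frac{km}{m-1} = k_p\, m = |\bar G|_p,
\]
which gives the remaining equalities in (a). In particular $\Centralizer_G(g)$ is a $p$-group, since $|\Centralizer_G(g)| = |\Centralizer_{\bar G}(\bar g)|$ is a power of $p$.

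For (b), on $N$ the character $\chi$ takes the value
\[
\chi(1) = (m-1)\sqrt q = |G:M|_{p'}\sqrt{|G:M|_p}.
\]
On $M\setminus N$ it takes the value $-\sqrt q = -k/\chi(1)$, and on $G\setminus M$ it is $0$ by Step 1.
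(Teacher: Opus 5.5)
Your proof is correct, and it differs from the paper's mainly in being self-contained. Several steps coincide with the paper's:
\begin{itemize}
\item $\chi(g)=-|G:M|/\chi(1)$ and $|\Centralizer_G(g)|=|G:M|+\chi(g)^2$ come from the second orthogonality relation, exactly as you do.
\item $|\Centralizer_G(g)|=|\Centralizer_{G/N}(gN)|$ comes from Theorem~\ref{mutual 1}, via the same class count you use.
\end{itemize}
For the arithmetic core, however, the paper simply cites Corollary~2.3 and the computations on p.~382 of Gagola's paper. This core consists of $|\Centralizer_{G/N}(gN)|=|G/N|_p$, $\chi(1)=|G:M|_{p'}\sqrt{|G:M|_p}$, and $\chi=-\sqrt{|G:M|_p}$ on $M\setminus N$.

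You prove these facts directly, in three steps:
\begin{itemize}
\item The degree equation gives $\chi(1)^2=|G:M|(|M:N|-1)$.
\item Integrality of the rational number $\chi(g)$ shows that $q=|G:M|/(|M:N|-1)$ is the square of an integer.
\item For each prime $r\neq p$, restrict $\chi$ to a Sylow $r$-subgroup $R$ of $G/N$. Since $R$ meets $M/N$ trivially, $\chi_R$ is a multiple of the regular character. Hence $|R|$ divides $\chi(1)$, which forces $q_r=1$.
\end{itemize}

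Your route buys three things:
\begin{itemize}
\item It is independent of the external reference.
\item It gives the extra information $|M:N|-1=|G:M|_{p'}$.
\item It treats the degenerate case $|G/N|=2$ (so $M=G$) uniformly. Theorem~\ref{mutual 1} and the cited Gagola results are stated for $|G/N|>2$ and $|G|>2$ respectively. So in the paper's approach that case strictly needs a separate, though trivial, check via Proposition~\ref{degenerate}.
\end{itemize}
The paper's route is shorter on the page because it outsources exactly the step you identified as the crux.
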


Our setting is a generalization of the one that S.M. Gagola studies in \cite{Gagola}. In that paper, the author considers groups having a nonlinear irreducible character $\chi$ that vanishes on all but two conjugacy classes of $G$.  He shows that $G$ has a unique minimal normal subgroup $M$, this $M$ is an elementary abelian $p$-group for a prime $p$, and the nonidentity elements in $M$ form a single conjugacy class of $G$.  He also shows that $\chi$ is the only irreducible character of $G$ that does not have $M$ in its kernel.  Furthermore, he shows that $G$ has a conjugacy class which consists of all nonidentity elements of a minimal normal subgroup if and only if $G$ has an irreducible character that vanishes on all but two conjugacy classes.  In other words, Gagola handles the case for Theorems \ref{intro-mutual} and \ref{intro-mutual 1} when $M$ is a minimal normal subgroup and $N = 1$.  In particular, we say that a group $G$ has a {\it Gagola character} if it has a nonlinear irreducible character $\chi$ that vanishes on all but two conjugacy classes of $G$, and we say $G$ is a {\it Gagola group} if it has a Gagola character. Note that our definition of a Gagola character differs from Gagola's in \cite{Gagola}, in
that we are requiring $\chi$ to be nonlinear.  This has the highly
beneficial effect of ruling out the very degenerate case when $|G| = 2$.

There are two other situations that we wish to highlight in this paper.  The first is to generalize the condition characterizing Gagola groups, by assuming that  a group $G$ has an irreducible character $\chi$ which vanishes on all but one {\it noncentral} conjugacy class of $G$.   We obtain the following generalization of Gagola's theorem, which, as can be read from the last claim of the statement, fits in our analysis about conjugacy classes that are the difference of two normal subgroups. 

\begin{theorem} \label{th:character}
Let $G$ be a group, and suppose $\chi$ is a nonlinear irreducible character of $G$.  Then the following properties are equivalent:
\begin{enumerate}[$(1)$]
\item There exists $g \in G \setminus \Center (G)$ such that $\chi$ vanishes on all noncentral conjugacy classes of $G$ except $g^G$.
\item We have $\Center (G) \leq \kernel(\chi)$, and there exists a noncentral element $g$ of $G$ such that, for every character $\psi \in \Irr(G)$ with $\psi \ne \chi$, either $g \in \kernel(\psi)$ when $\psi \in \Irr(G/\Center (G))$ or otherwise $\psi (g) = 0$.
\end{enumerate} 
Furthermore, if the above properties hold, then we have $\Center (G) = \kernel (\chi) < G'$, the value $\chi (g)$ is $-|\Center (G)| \chi(1)/|g^G| \in \mathbb{Z}$, and $ |\Centralizer_G (g)| = |\Centralizer_{G/\Center (G)} (g\Center (G))|$ is a $p$-power for a suitable prime $p$. In particular, $\Center(G)$ is a $p$-group. Finally, $G/\Center(G)$ has a unique minimal normal subgroup $M/\Center(G)$, and $M\setminus\Center(G)$ is a conjugacy class of $G$.
\end{theorem}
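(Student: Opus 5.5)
The plan is to reduce both directions to Theorem~\ref{intro-mutual} with $N=\Center(G)$, and then to read off the ``furthermore'' part from Theorem~\ref{intro-mutual 1} together with orthogonality relations. Throughout write $Z=\Center(G)$. In (1) I take $\chi(g)\neq 0$ as part of the hypothesis. If $\chi(g)=0$, then $\chi$ vanishes on all of $G\setminus Z$, and the condition holds for every $g$; in that case $Z\le\kernel(\chi)$ can fail, for instance for a fully ramified character. So this nondegeneracy is implicitly needed.

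\emph{(1)$\Rightarrow$(2).} Write $\chi_Z=\chi(1)\lambda$ and fix $z\in Z$. Since $gz$ is noncentral and $\chi(gz)=\lambda(z)\chi(g)\neq 0$, we get $gz\in g^G$. Hence $\lambda(z)\chi(g)=\chi(gz)=\chi(g)$, so $\lambda(z)=1$ and $Z\le\kernel(\chi)$. The same observation handles $\psi\in\irr{G\mid Z}$. Write $\psi_Z=\psi(1)\mu$ with $\mu\neq 1_Z$ and choose $z$ with $\mu(z)\ne1$. Then $\psi(g)=\psi(gz)=\mu(z)\psi(g)$, so $\psi(g)=0$. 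Thus condition (2) of Theorem~\ref{intro-mutual} holds with $N=Z$, since the noncentral classes are exactly the classes in $G\setminus Z$. Condition (1) of that theorem then gives $M$ with $g^G=M\setminus Z$ and $\irr{G/Z\mid M/Z}=\{\chi\}$. Hence every $\psi\in\irr{G/Z}$ with $\psi\neq\chi$ has $M$, and so $g$, in its kernel.

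\emph{(2)$\Rightarrow$(1).} Let $h$ be noncentral and not conjugate to $g$. Column orthogonality gives $\sum_{\psi\in\irr G}\psi(g)\overline{\psi(h)}=0$. In this sum the characters in $\irr{G\mid Z}$ contribute $0$, and each $\psi\in\irr{G/Z}$ with $\psi\neq\chi$ has $\psi(g)=\psi(1)$. Since $hZ\neq Z$, the regular character of $G/Z$ gives $\sum_{\psi\in\irr{G/Z}}\psi(1)\psi(h)=0$. Subtracting, we obtain $(\chi(g)-\chi(1))\overline{\chi(h)}=0$. If $\chi(g)=\chi(1)$, then $g$ would lie in the kernel of every character of $G/Z$, that is $g\in Z$, which is a contradiction. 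Hence $\chi(h)=0$.

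\emph{Furthermore part.} From the above, Hypothesis~\ref{Hypothesis 1} holds with $N=Z$. Theorem~\ref{intro-mutual 1}(a) then gives $|\Centralizer_G(g)|=|\Centralizer_{G/Z}(gZ)|=|G/Z|_p$, a $p$-power. Since $Z\le\Centralizer_G(g)$, the centre $Z$ is a $p$-group. Orthogonality of $\chi$ with $1_G$ gives $|Z|\chi(1)+|g^G|\chi(g)=0$, which is the stated formula. Theorem~\ref{intro-mutual 1}(b) shows $\chi(g)=-\sqrt{|G:M|_p}$; this is rational and an algebraic integer, hence an integer. For $\kernel(\chi)=Z$, let $k\in\kernel(\chi)\setminus Z$. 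Then $\chi(k)=\chi(1)\ne0$ forces $k\in g^G$, so $\chi(g)=\chi(1)$, which was excluded above.

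For $Z<G'$, take $z\in Z$. Then $gz\in M\setminus Z=g^G$, so $gz=g^x$ for some $x$ and $z=g^{-1}g^x\in G'$. Equality $Z=G'$ is impossible, since otherwise $\chi\in\irr{G/Z}$ would be linear. For uniqueness of the minimal normal subgroup, let $K/Z$ be minimal normal in $G/Z$ with $K/Z\neq M/Z$. Since $M/Z$ is a chief factor by Lemma~\ref{lem:M le G}, we have $K\cap M=Z$. The characters of $G/K$ are characters of $G/Z$ different from the faithful character $\chi$, so they all contain $M$ in their kernels. This forces $M\le K$, a contradiction. The last claim, that $M\setminus Z$ is a conjugacy class of $G$, was already obtained above.

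The main obstacle I expect is the first step, showing $Z\le\kernel(\chi)$, because it depends on $\chi(g)\neq0$; this is the point where the degenerate reading of (1) must be excluded.
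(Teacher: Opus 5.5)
Your proof is correct in substance. Your observation that (1) has to be read with $\chi(g)\neq 0$ is also right: the paper's own argument uses this tacitly (``Since $\chi(g)\neq 0$''), and a faithful character of an extra-special group satisfies the literal form of (1) while $\Center(G)\not\le\kernel(\chi)$.

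With that reading, however, your (2)$\Rightarrow$(1) is one line short. You show $\chi(h)=0$ for noncentral $h\notin g^G$, but you never show $\chi(g)\neq 0$. Your ``furthermore'' part passes through (1), so it needs this fact when one starts from (2). It follows from the same column orthogonality with $h=1$: $0=\sum_{\psi\in\irr{G/Z}}\psi(1)\psi(g)=|G:Z|-\chi(1)^2+\chi(1)\chi(g)$, and $|G:Z|-\chi(1)^2\ge 1$ since $1_G\neq\chi$.

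Your route differs from the paper's mainly in where the work is done.

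\emph{(1)$\Rightarrow$(2).} The paper argues purely by first orthogonality, summing $\chi$ and $\psi\overline{\chi}$ over $G$. This yields $Z\le\kernel(\chi)$, the value of $\chi(g)$, and $\psi(g)\in\{\psi(1),0\}$ all at once. Your translation argument $gZ\subseteq g^G$ is a slicker way to get $Z\le\kernel(\chi)$ and the vanishing of $\irr{G\mid Z}$ at $g$. But you then need Theorem~\ref{intro-mutual} for the $\irr{G/Z}$ part. There you implicitly use that the $M$, $g$, $\chi$ it produces are yours, which the proof of Theorem~\ref{mutual 1} and the uniqueness of Gagola characters guarantee.

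\emph{(2)$\Rightarrow$(1).} The paper returns through the Gagola machinery: Theorem~\ref{gagola 2} applied to $G/Z$, then Theorem~\ref{mutual 1}. Your direct orthogonality computation is self-contained.

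\emph{Furthermore part.} Your commutator argument for $Z<G'$ and your kernel argument for the uniqueness of $M/Z$ replace the paper's citations of Theorem~\ref{mutual 2} and Theorem~\ref{gagola}. In both versions the $p$-power claim rests on Gagola's Corollary~2.3, through Theorem~\ref{intro-mutual 1}.
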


Note that the hypothesis of Theorem \ref{th:character} is one step from groups that have an irreducible character that vanishes on every noncentral element.  A group with such a character is called a {\it group of  central type}.  In \cite{DeJa}, F.R. DeMeyer and G. Janusz prove that the Sylow subgroups of groups of central type are also of central type, and R.B. Howlett and I.M. Isaacs in \cite{IsHo} build on the work of Gagola from \cite{Gafr} to show that every group of central type is solvable.  We note that groups with a Gagola character need not be solvable.  A nonsolvable example is constructed in \cite{nonsolv}.  There are also nonsolvable examples described in Theorem 1.20 of \cite{GGLMNT}.  In Section \ref{examples}, we will present a nonsolvable example for Theorem~\ref{th:character}.

It is known that there seems to be a dual theory between conjugacy classes and irreducible characters of a group.   
%
Recall that an element $g\in G$ is called a {\it Camina element} (or an {\it anticentral element}) (see \cite[Lemma~2.1]{Lewis} or \cite[Proposition~1.1]{Ladisch}, respectively) if every nonlinear irreducible character of $G$ vanishes at $g$. It is known that groups with such an element are solvable (see \cite[Theorem~4.3]{Ladisch}). 
The next step is to consider an element of $G$ on which all but one of the nonlinear irreducible characters vanish.  In this situation, 
%
%
%
we are able to show that $G$ must be solvable.  Notice that the alternating group $A_5$ has a conjugacy class that is a zero of all but two nonlinear irreducible characters, so we cannot extend this condition any further and still obtain solvability.  We now specialize Theorems \ref{intro-mutual} and \ref{intro-mutual 1} to the case where $M = G'$, and we obtain the following theorem:

\begin{theorem}\label{th:class}
Let $G$ be a group. Assume there exist an element $g \in G \setminus \Center (G)$ and a nonlinear character $\chi \in \irr G$ such that $\chi(g)\neq 0$ and, for every nonlinear irreducible character $\psi \neq \chi$ of $G$, we have $\psi (g) = 0$. Then the following conclusions hold.
\begin{enumerate}[{\rm (a)}]
\item $g\in G'$ and $\chi (g) = -|G:G'|/\chi(1)$.
\item $G' = \kernel(\chi) \cup g^G$ and $\chi$ vanishes off of $G'$.
\item $\chi$ is the unique nonlinear irreducible character of $G/\kernel(\chi)$. 
\item $G$ is solvable.
\end{enumerate}
\end{theorem}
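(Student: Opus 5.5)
The plan is to work entirely with the second orthogonality relation applied to the column of $g$. Every irreducible character other than $\chi$ that is nonzero at $g$ is linear. Write $\Lambda$ for the set of linear characters of $G$, so $|\Lambda| = |G:G'|$. Since $g \ne 1$, orthogonality of the columns of $g$ and $1$ gives
\[ \sum_{\lambda \in \Lambda} \lambda(g) + \chi(1)\chi(g) = 0 . \]
The sum $\sum_{\lambda\in\Lambda}\lambda(g)$ is the permutation-type sum over $\irr{G/G'}$. It equals $|G:G'|$ if $g \in G'$ and $0$ otherwise. Since $\chi(g)\neq 0$ we must have $g \in G'$, and then $\chi(g) = -|G:G'|/\chi(1)$. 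This is (a). In particular $\chi(g)$ is a negative real number, hence $\chi(g)\neq \chi(1)$.

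For (b), let $K = \kernel(\chi)$ and apply orthogonality to the columns of $g$ and an arbitrary $h \in G$.

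First take $h \notin G'$. Then $\sum_\lambda \lambda(g)\overline{\lambda(h)} = \sum_\lambda \overline{\lambda(h)} = 0$, because $g\in G'$. Hence $\chi(g)\overline{\chi(h)} = 0$, so $\chi(h)=0$ and $\chi$ vanishes off $G'$. Consequently $K \subseteq G'$.

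Next take $h \in G'$ with $h \notin g^G$. Now the linear part contributes $|G:G'|$, so $|G:G'| + \chi(g)\overline{\chi(h)} = 0$. Using (a), this gives $\overline{\chi(h)} = \chi(1)$, so $h \in K$.

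Since $g^G \subseteq G'$, we get $G' = K \cup g^G$. The union is disjoint because $\chi(g)\ne\chi(1)$. Note also that $K<G'$ is normal, so $g^G = G'\setminus K$. Thus we are exactly in the setting $M = G'$, $N = K$ of Theorem~\ref{intro-standard conj} and Theorem~\ref{intro-standard res}.

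For (c), suppose $\psi \ne \chi$ is a nonlinear irreducible character with $K \le \kernel(\psi)$. By hypothesis $\psi(g) = 0$. By Theorem~\ref{intro-standard conj}, applied with $M=G'$ and $N=K$, $\psi$ is constant on $G'\setminus K = g^G$, so $\psi$ vanishes on $G'\setminus K$ and equals $\psi(1)$ on $K$. Then
\[ [\psi_{G'}, 1_{G'}] = \frac{\psi(1)|K|}{|G'|} > 0, \]
so $G' \le \kernel(\psi)$. This forces $\psi$ to be linear, a contradiction. Hence $\chi$ is the unique nonlinear irreducible character of $G/K$.

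Finally, for (d), Theorem~\ref{intro-standard res}(b) gives that $M = G'$ is solvable. Since $G/G'$ is abelian, $G$ is solvable.

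I expect no serious obstacle. The only delicate point is the bookkeeping in the orthogonality sums, namely splitting into the cases $h\notin G'$ and $h\in G'\setminus g^G$. The heavy lifting for (c) and (d) is delegated to Theorems~\ref{intro-standard conj} and~\ref{intro-standard res}.
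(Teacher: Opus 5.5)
Your proof is correct. Parts (a) and (b) follow the paper's argument, namely second orthogonality of the column of $g$ against the column of $1$ and of an arbitrary $h$. You streamline (b) slightly: testing an arbitrary $h\notin G'$ gives both that $\chi$ vanishes off $G'$ and that $K\subseteq G'$ in one step. The paper only tests $x\in K$ at that point and recovers the vanishing later from the fact that $\chi$ is a Gagola character of $G/K$.

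For (c) and (d) your route genuinely differs from the paper's.

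For (c), the paper checks Condition (3) of Theorem~\ref{mutual 1} with $M=G'$, $N=K$. It passes to Condition (1), i.e. Hypothesis~\ref{Hypothesis 1}, and reads (c) off $|\irr{G/K\mid G'/K}|=1$. You instead prove (c) directly by computing $[\psi_{G'},1_{G'}]=\psi(1)|K|/|G'|>0$.
\begin{itemize}
\item The step from $[\psi_{G'},1_{G'}]>0$ to $G'\le\ker\psi$ is Clifford's theorem, since $1_{G'}$ is $G$-invariant.
\item Citing Theorem~\ref{intro-standard conj} for constancy of $\psi$ on $G'\setminus K$ is unnecessary, because that set is the single class $g^G$.
\end{itemize}

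For (d), the paper invokes Seitz's classification of groups with a unique nonlinear irreducible character. This shows that $G/K$ is solvable and that $g$ is an $r$-element. The paper then reruns the vanishing-implies-$r$-divisibility argument and the Berkovich-type theorem on the last term of the derived series of $K$. You simply observe that $g^G=G'\setminus K$ puts you in the hypotheses of Theorem~\ref{standard res} with $M=G'$, $N=K$. That gives $G'$ solvable, hence $G$ solvable.

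Your route avoids Seitz's theorem entirely and is shorter. It uses the same character-degree input, just packaged inside Theorem~\ref{standard res}. What the paper's longer route buys is the extra structure recorded in Theorem~\ref{detailedThm6}\,(f). That is, $G/K$ is an extra-special $2$-group or a Frobenius group of the stated type, and $|\Centralizer_G(g)|$ is explicit. None of this is needed for the statement you were asked to prove.
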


In Section \ref{secn:background}, we provide some background information.  We then review the results of Gagola regarding groups with an irreducible character that vanishes on all but two conjugacy classes and results about Camina pairs.  We continue in Section \ref{secn:between} where we prove Theorems \ref{intro-standard conj}, \ref{intro-standard res},  \ref{intro-conj p groups}, \ref{intro-mutual}, and \ref{intro-mutual 1}.  We start Section \ref{occurs 1} by proving Theorems \ref{th:character} and \ref{th:class}.  We then consider a number of special cases of those theorems.  In Section \ref{occurs 2}, we discuss $p$-groups that have a Gagola character and their extensions.  Finally, in Section \ref{examples}, we present examples on non $p$-groups having a Gagola charcter (including a nonsolvable example) and their extensions.


\section{Background} \label{secn:background}

A conjugacy class $C$ that coincides with a set of the form $M \setminus N$ where $N < M$ are normal subgroups of $G$ is, in some sense, a conjugacy class of maximal size.  By way of explanation, let $C$ be a conjugacy class of $G$, and take $M = \langle C \rangle$ to be the (normal) subgroup of $G$ generated by $C$.  We show that $C$ is contained in the difference $M\setminus N$ for every normal subgroup $N$ of $G$ that is properly contained in $M$.

\begin{lem} \label{conj containment}
Let $G$ be a group, let $C$ be a conjugacy class of $G$, and let $M = \langle C \rangle$.  If $N$ is a normal subgroup of $G$ such that $N < M$, then $C \subseteq M \setminus N$. 
\end{lem}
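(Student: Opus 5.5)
Since $C \subseteq \langle C \rangle = M$, it suffices to show that $C \cap N = \emptyset$. We argue by contradiction. Suppose some $x \in C$ lies in $N$. Because $N$ is normal in $G$, every $G$-conjugate of $x$ also lies in $N$. Hence $C = x^G \subseteq N$.

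Now $N$ is a subgroup containing $C$, so it contains the subgroup generated by $C$, that is, $M = \langle C \rangle \le N$. This contradicts the hypothesis $N < M$. Therefore $C \cap N = \emptyset$, and so $C \subseteq M \setminus N$.

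There is no real obstacle here. The only point to state explicitly is that a conjugacy class meeting a normal subgroup is entirely contained in it; this is immediate from the closure of normal subgroups under conjugation. We also note, although the argument does not need it, that $M$ is normal in $G$: it is generated by a $G$-invariant set.
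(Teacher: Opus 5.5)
Your proof is correct and follows essentially the same route as the paper. The paper likewise notes that $C \not\subseteq N$ because $N < \langle C \rangle$, and that $N$, being a union of $G$-conjugacy classes, must then be disjoint from $C$. You make the same two observations, in the opposite order, as a proof by contradiction.
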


\begin{proof}
Since $M = \langle C \rangle$, we have that $C$ is not contained in $N$.  Since $N$ is a union of conjugacy classes of $G$, we have that $C \cap N$ is empty, and hence, $C \subseteq M \setminus N$.
\end{proof}

We now consider the situation when $g^G = M \setminus N$ for some $g \in M \setminus N$.

\begin{lem}\label{lem:M le G}
Let $G$ be a group and let $C$ be a conjugacy class of $G$.  Suppose that $N < M$ are normal subgroups of $G$ such that $C = M \setminus N$.  Then the following are true:
\begin{enumerate}[{\rm (a)}]
\item If $M \not\le G'$, then $N = M \cap G'$, $|M:N| = 2$, and $C = aN$ for any $a \in C$.
\item $M = \langle C \rangle$.
\item $M/N$ is a chief factor of $G$.
\item If $K$ is normal in $G$ so that $K < M$, then $K \le N$.  In particular, $N$ is the unique normal subgroup of $G$ contained in $M$ so that $M/N$ is a chief factor for $G$.
\end{enumerate}
\end{lem}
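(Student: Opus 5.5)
I would prove the four parts in the order (b), (d), (c), (a), since (d) gives (c) at once and (a) uses only elementary coset facts.

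For (b), let $L=\langle C\rangle$, so $L\le M$ and $L$ is normal in $G$. If $L\le N$, then $C\subseteq N$, contradicting $C=M\setminus N$. So $L\not\le N$, and $L$ contains an element of $M\setminus N=C$, hence all of $C$. Moreover $N$ itself lies in $L$: take $a\in C$ and $n\in N$; then $an\in M\setminus N=C$, so $n=a^{-1}(an)\in L$. Therefore $L\supseteq N\cup C=M$, and $M=\langle C\rangle$.

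For (d), let $K$ be normal in $G$ with $K<M$, and suppose $K\not\le N$. Then $K$ contains some element of $M\setminus N=C$. Since $K$ is normal, $K\supseteq C$, so $K\supseteq\langle C\rangle=M$ by (b), which is a contradiction. Hence $K\le N$.

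Part (c) follows immediately: any normal $K$ with $N\le K<M$ satisfies $K\le N$ by (d), so $M/N$ is a chief factor. Uniqueness also follows: if $M/N'$ is a chief factor with $N'$ normal in $G$, then $N'<M$ gives $N'\le N$, and minimality of $M/N'$ forces $N'=N$.

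For (a), suppose $M\not\le G'$. Then $M\cap G'$ is normal in $G$ and properly contained in $M$, so $M\cap G'\le N$ by (d). Since $G/G'$ is abelian, each $a\in C$ satisfies $a^G\subseteq aG'$, and because $C\subseteq M$ this gives $C\subseteq a(G'\cap M)$. Thus $|C|\le |M\cap G'|\le |N|$. On the other hand $|C|=|M|-|N|=(|M:N|-1)|N|$. Since $|M:N|\ge 2$, this forces $|M:N|=2$ and $|C|=|N|$, so all the inequalities are equalities. Then $|M\cap G'|=|N|$ together with $M\cap G'\le N$ gives $N=M\cap G'$. Finally, $C=M\setminus N$ is the nontrivial coset of $N$ in $M$, so $C=aN$ for any $a\in C$.

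The main subtlety is the order of the arguments: (b) must be established first, in particular $N\subseteq\langle C\rangle$, because the rest depends on it. The counting inequality in (a) is the only quantitative step.
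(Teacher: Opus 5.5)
Your proof is correct. For (b), (c) and (d) it is essentially the paper's argument. The paper obtains (b) from $M=\langle C\rangle\cup N$, implicitly using that a group is not the union of two proper subgroups. You instead show $N\subseteq\langle C\rangle$ directly via $n=a^{-1}(an)$. Your proof of (d) is just Lemma~\ref{conj containment} written out inline.

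The genuine difference is in (a).

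\emph{The paper's route.} It argues structurally: $C\subseteq aG'$ and $M=\langle a^G\rangle\subseteq\langle a\rangle G'$. Since $M\setminus N$ is closed under inversion, $a^{-1}\in C\subseteq aG'$, so $a^2\in G'$. Hence $|M:M\cap G'|=|MG':G'|=2$, and only then is $N=M\cap G'$ identified. The paper cites (c) at this point, though it is really the uniqueness in (d) that pins down $N$.

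\emph{Your route.} You apply (d) first to get $M\cap G'\le N$, then note $C\subseteq aG'\cap M=a(M\cap G')$. Comparing $|C|\le|M\cap G'|\le|N|$ with $|C|=(|M:N|-1)|N|$ gives $|M:N|=2$ and $N=M\cap G'$ at once.

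\emph{What each buys.} Your counting argument is a bit shorter and makes the dependence on (d) explicit. The paper's argument explains \emph{why} the index is $2$: the image of $a$ generates $MG'/G'$ and equals its own inverse there.
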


\begin{proof}
Since we are assuming $C = M \setminus N$, we have $M = \langle C \rangle \cup N$; since $\langle C \rangle$ is not contained in $N$, the converse must be true, hence $M = \langle C \rangle$ and (b) is proved. 

Next, let $K$ be normal in $G$ so that $K < M$.  By Lemma \ref{conj containment}, we have $C \subseteq M \setminus K$.  Since $C = M \setminus N$, this implies $M \setminus N \subseteq M \setminus K$ and hence $K \subseteq N$.  It follows that $N$ contains every normal subgroup of $G$ that is proper in $M$.  This implies that $M/N$ is a chief factor for $G$, and (c), (d) are proved.

Finally, for (a), let us assume that $M$ is not contained in $G'$. For $a\in C$, it is easily seen that $M\setminus N=C\subseteq aG'$, and $M=\langle C\rangle=\langle a^G\rangle\subseteq \langle a\rangle G'$. In particular, $a\not\in G'$; moreover, since $a^{-1}\not\in N$, we have $a\in C$ and therefore $a^{-1}\in aG'$. Thus we get $aG'=a^{-1}G'$, and so $a^2\in G'$. It follows that $|\langle a\rangle G': G'|=2$, and hence $|M:M\cap G'|=2$. Now, by (c), we must have $N=M\cap G'$ and the conclusion follows.
\end{proof}

To see examples of groups as in Lemma~\ref{lem:M le G} such that $M$ is not contained in $G'$, take $M=G$ to be a Frobenius group whose Frobenius complements have order $2$. In this case, $N$ will be the Frobenius kernel $G'$, and if $g \in G \setminus N$, then $g^G = G \setminus N$. As another example of this kind, but with $M\neq G$, one can consider $G$ to be the dihedral group of order $8$, $M$ a maximal subgroup of $G$ and $N=G'$.  We note that the situation when $M$ is not contained in $G'$, which yields $g^G = gN$, is handled in Lemma 3 and Corollary 4 of \cite{flat}, and discussed further in Proposition~\ref{degenerate}.

Still assuming that $C$ is a conjugacy class of $G$ and $M=\langle C\rangle$, let $N_1, \ldots, N_k$ be the various normal subgroups of $G$ contained in $M$ so that each $M/N_i$ is a chief factor for $G$.  
Observe that $C \subseteq M \setminus N_i$ for each $i$.  In Theorem \ref{intro-standard conj}, we are finding equivalent conditions on $C$ and on the irreducible characters of $G$ for $k$ to equal $1$ and equality to hold.  It would be interesting to see if one can find conditions on either a conjugacy class $C$ of $G$ or the irreducible characters of $G$ that yield either that $k = 1$ (without requiring the class equal $M \setminus N_1$) or that equality holds so that $\displaystyle C =  G \setminus \cup_{i=1}^k N_i$.


When $G$ is a Gagola group, we have the following (Lemma~2.1 in \cite{Gagola} or Theorem~4.16 in \cite{Lewis}): $G$ has a unique minimal normal subgroup $N$ and $N$ is an elementary abelian $p$-group for some prime $p$, the zeros of $\chi$ are precisely the elements of $G \setminus N$, and $N \setminus \{ 1 \}$ is a conjugacy class of $G$ (which implies, by Corollary 6.33 of \cite{Isaacs}, that the conjugation action of $G$ on $\irr N \setminus \{ 1_N \}$ is transitive).  We summarize this result  in the following theorem:

\begin{thm}\label{gagola} \emph{[Lemma 2.1 of \cite{Gagola}]}
Let $G$ be a group with $|G| > 2$.  Assume that $G$ has an irreducible character $\chi$ that does not vanish on exactly two conjugacy classes of $G$.  Then the following hold:
\begin{enumerate}[{\rm (a)}]
\item $\chi$ is the unique irreducible character that vanishes on all but two conjugacy classes of $G$.
\item $\chi$ is the unique irreducible faithful character of $G$.
\item $G$ has a unique minimal normal subgroup $N$.
\item $N$ is an elementary abelian $p$-group for some prime $p$.
\item $\chi$ vanishes on $G \setminus N$ and does not vanish on $N$.
\item The action of $G$ by conjugation on $N$ is transitive on the nonidentity elements.
\end{enumerate}
\end{thm}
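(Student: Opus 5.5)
We argue with the orthogonality relations and with how $\chi$ restricts to normal subgroups. Let $\chi$ be nonzero exactly on the classes of $1$ and of some $z \ne 1$; it must be nonzero at $1$. We may assume $\chi$ nonlinear, since a linear $\chi$ never vanishes and then $G$ has exactly two classes, forcing $|G|=2$.

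\textbf{Step 1: $\chi$ is faithful.} Set $K=\ker\chi$. If $K \ne 1$, then $\chi$ is nonzero on $K$, so $K = \{1\}\cup z^G$ and $\chi$ vanishes off $K$. By the standard lemma, a character vanishing off a normal subgroup $K$ that contains its kernel satisfies $\chi(1)^2 \ge |G:K|$; since $\chi$ is constant equal to $\chi(1)$ on $K$, the first orthogonality relation $|G| = \sum_g |\chi(g)|^2 = |K|\chi(1)^2$ gives $\chi(1)^2 = |G:K|$. But $\chi$ is a character of $G/K$ and $\chi(1)^2 < |G/K|$ unless $|G/K|=1$, which is impossible for nonlinear $\chi$. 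Hence $K=1$.

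\textbf{Step 2: the minimal normal subgroup.} Put $N = \{1\}\cup z^G \cup (z^{-1})^G\cup\cdots$. More precisely, I would show that $S=\{1\}\cup z^G$ is itself a normal subgroup. Write $\rho = \sum_{\psi}\psi(1)\psi$ for the regular character. Using column orthogonality of $\chi\bar\chi$ against characters, the function $\chi\bar\chi$ is supported on $S$, so $\chi\bar\chi = a\cdot 1_S^G$-type combination. Since $\chi\bar\chi$ is a character supported on the set $S$, the values there give $\chi\bar\chi = \frac{\chi(1)^2}{|G:\langle S\rangle|}(1_{\langle S\rangle})^G$ provided $S$ is a subgroup. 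The cleaner route is Gagola's: consider the class sums $\hat 1$ and $\hat{z^G}$. The central idempotent $e_\chi$ is a linear combination of exactly these two class sums, so the span of $\hat 1, \hat{z^G}$ contains the idempotent $e_\chi$. Then $e_\chi = \alpha + \beta \hat{z^G}$, and $e_\chi^2=e_\chi$ forces $\hat{z^G}^2 \in \mathrm{span}(1,\hat{z^G})$. Hence $z^Gz^G\subseteq S$, so $S$ is closed under multiplication, and $N:=S$ is a normal subgroup with $N\setminus\{1\}$ a single class. This gives (f), and (e) follows because $\chi$ vanishes off $S=N$ and is nonzero on $N$.

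\textbf{Step 3: structure of $N$ and uniqueness.} All nonidentity elements of $N$ are conjugate, so they have the same prime order $p$. Thus $N$ is a $p$-group, and its center is a nontrivial normal subgroup meeting the single class, so $N$ is abelian and elementary abelian. This gives (d). For any normal $L\ne1$, faithfulness shows that $\chi_L$ is not a multiple of $1_L$. If $L\cap N=1$, then $\chi$ vanishes on $L\setminus\{1\}$, so $\chi_L$ is a multiple of the regular character of $L$. Since $\chi_L$ is also a sum of conjugates of a $\theta \in \operatorname{Irr}(L)$, Clifford's theorem forces $L$ to be abelian and to act regularly. Then $\ker\chi\supseteq$ the commutator $[L,N]\subseteq L\cap N=1$, and I expect the contradiction to come from $\chi$ restricted to $LN$. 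This step is the main obstacle. I would try first the alternative argument that a minimal normal $L\ne N$ gives $\chi_{L}$ a multiple of $\rho_L$, hence $\chi(1)^2\ge$ bounded, and then derive a contradiction with the inequality $\chi(1)^2=|G:N|$ obtained from orthogonality as in Step 1. This gives (c).

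\textbf{Step 4: (a) and (b).} Every other faithful $\psi$ restricts nontrivially to $N$. Since $G$ is transitive on $\operatorname{Irr}(N)\setminus\{1_N\}$, $\psi$ lies over the same orbit as $\chi$. Counting with $\sum_{\psi\in\operatorname{Irr}(G\mid N)}\psi(1)^2=|G|-|G/N|$ and $\chi(1)^2=|G:N|(|N|-1)$ (from orthogonality with $\chi$ vanishing off $N$) shows $\chi$ is the only character not containing $N$ in its kernel. This gives (b), and (a) follows since any character with the two-class property is faithful by Step 1.
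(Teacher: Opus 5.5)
\textbf{There is a genuine gap in (c).} Steps 1, 2 and 4 and your proof of (d) are sound. Step 2's idempotent argument is a clean way to see that $\{1\}\cup z^G$ is a normal subgroup. The paper gives no proof of this statement at all; it simply cites Gagola's Lemma 2.1, so there is no argument of the paper's to compare against here.

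The gap is the uniqueness of the minimal normal subgroup. Your Step 3 does not finish it, for two reasons:
\begin{itemize}
\item Clifford's theorem does not force $L$ to be ``abelian and acting regularly.''
\item Your fallback rests on $\chi(1)^2=|G:N|$, which is false. With $\chi$ supported on $N=\{1\}\cup z^G$, the relations $\sum_g\chi(g)=0$ and $\sum_g|\chi(g)|^2=|G|$ give $\chi(z)=-\chi(1)/(|N|-1)$ and $\chi(1)^2=|G:N|(|N|-1)$. You use this correct value yourself in Step 4. For $S_3$ the two sides are $4$ and $2$.
\end{itemize}
So, as written, (c) is unproved. Parts (e) and (f) are phrased in terms of \emph{the} unique minimal normal subgroup, so they depend on (c) as well.

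You already had both halves of the missing argument. Suppose $1\neq L$ is normal in $G$ with $L\cap N=1$. Then $\chi$ vanishes on $L\setminus\{1\}$, so $\chi_L=\frac{\chi(1)}{|L|}\rho_L$, and $1_L$ is a constituent of $\chi_L$. By Clifford, every constituent of $\chi_L$ is $G$-conjugate to $1_L$. Since $1_L$ is $G$-invariant, this gives $\chi_L=\chi(1)1_L$. Hence $L\le\ker\chi=1$, a contradiction. This is the same trick the paper uses in the proof of Theorem~\ref{mutual 1}, (3)$\Rightarrow$(2). So every nontrivial normal $L$ meets $N$ nontrivially. Then $L\cap N$ contains an element of the single class $N\setminus\{1\}$, so $N\le L$, which is (c).

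Alternatively, reorder your steps. The counting in Step 4 gives $\Irr(G\mid N)=\{\chi\}$ directly; the transitivity on $\Irr(N)\setminus\{1_N\}$ is not needed there, and would otherwise require Brauer's permutation lemma. If $L\neq N$ were another minimal normal subgroup, then $L\cap N=1$, so $N\not\le L$. Hence some $\psi\in\Irr(G/L)$ does not contain $N$ in its kernel. This forces $\psi=\chi$, and so $L\le\ker\chi=1$, a contradiction.

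Finally, delete two pieces that play no role: the abandoned $\chi\bar\chi$ discussion in Step 2, and the inequality $\chi(1)^2\ge|G:K|$ in Step 1.
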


To introduce more properties of groups with Gagola characters, we also need to recall the concept of a \emph{Camina pair}, which is a generalization of the concept of a Camina element.  Let $N \neq 1$ be a proper normal subgroup of $G$: we say that $(G,N)$ is a Camina pair if, for every element $g \in G \setminus N$, we have $gN\subseteq g^G$. There are a number of equivalent conditions for being a Camina pair. The following lemma is stated as Lemma 4.1 in \cite{Lewis}. 

\begin{lem}\label{Camina pair-equiv}
Let $N\neq 1$ be a proper normal subgroup of a group $G$.  Then the following are equivalent:
\begin{enumerate}[$(1)$]
\item $(G,N)$ is a Camina pair.
\item If $x \in G \setminus N$, then $|\Centralizer_G (x)| = |\Centralizer_{G/N} (xN)|$.
\item If $xN$ and $yN$ are conjugate in $G/N$ and nontrivial, then $x$ is conjugate to $y$ in $G$.
\item For all $x \in G \setminus N$ and $z \in N$, there exists an element $y \in G$ such that $[x,y] = z$.
\item Every $\chi \in \irr {G \mid N}$ vanishes on $G \setminus N$. 
\end{enumerate}
\end{lem}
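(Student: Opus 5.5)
We prove the cycle $(1)\Rightarrow(2)\Rightarrow(3)\Rightarrow(4)\Rightarrow(5)\Rightarrow(1)$, together with the additional equivalences $(2)\Leftrightarrow(4)$ where convenient, and each step uses only class counting or the orthogonality relations. Throughout, $N$ is a nontrivial proper normal subgroup, so $G/N$ and its centralizers are well defined.

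$(1)\Rightarrow(3)$. Let $xN$ and $yN$ be conjugate and nontrivial, say $yN = x^hN$. Then $y \in x^hN$. Since $x^hN \subseteq (x^h)^G = x^G$ by (1), $y$ is conjugate to $x$.

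$(3)\Rightarrow(1)$. Take $x \notin N$ and $n \in N$. The coset $xnN = xN$ is conjugate to $xN$ and nontrivial, so by (3) $xn$ is conjugate to $x$. Hence $xN \subseteq x^G$.

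$(1)\Leftrightarrow(2)$. The class $x^G$ maps onto $(xN)^{G/N}$. Each fibre over a coset $x^hN$ is $x^G \cap x^hN$, and these fibres all have the same size because conjugation by $h$ permutes them. Therefore
\[
|x^G| = |(xN)^{G/N}| \cdot |x^G \cap xN|,
\]
so $|x^G| = |N|\,|(xN)^{G/N}|$ if and only if $xN \subseteq x^G$. Writing this in terms of centralizers, $|\Centralizer_G(x)| = |G|/|x^G|$ and $|\Centralizer_{G/N}(xN)| = |G:N|/|(xN)^{G/N}|$, and the equality of these two numbers is exactly (2).

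$(1)\Leftrightarrow(4)$. The set $\{[x,y] : y \in G\}$ equals $x^{-1}x^G$. Under (1), $x^{-1}x^G \supseteq x^{-1}xN = N$, which gives (4). Conversely, (4) gives $N \subseteq x^{-1}x^G$, hence $xN \subseteq x^G$, which is (1).

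$(2)\Leftrightarrow(5)$. For $x \notin N$, apply the second orthogonality relation in $G$ and in $G/N$:
\[
|\Centralizer_G(x)| = \sum_{\chi\in\irr G}|\chi(x)|^2 = |\Centralizer_{G/N}(xN)| + \sum_{\chi\in\irr{G\mid N}}|\chi(x)|^2 .
\]
Here we split $\irr G$ into $\irr{G/N}$ and $\irr{G \mid N}$; the first part sums to $|\Centralizer_{G/N}(xN)|$ by orthogonality in $G/N$. The remaining sum is a sum of nonnegative terms, so it vanishes, i.e. (2) holds, if and only if every $\chi \in \irr{G\mid N}$ vanishes at $x$. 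Ranging over all $x \notin N$ gives exactly (5).

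The only slightly delicate point is the fibre-counting in $(1)\Leftrightarrow(2)$, namely that all fibres have equal size, which follows from the transitivity of conjugation on the class.
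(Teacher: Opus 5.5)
Your proof is correct and complete. The paper does not prove this lemma; it quotes it as Lemma 4.1 of \cite{Lewis}, so your write-up gives a self-contained proof where the paper has none. Your step $(2)\Leftrightarrow(5)$ is the same splitting of the second orthogonality sum over $\irr{G/N}$ and $\irr{G\mid N}$ that the paper itself uses later, in the proof of Theorem~\ref{mutual 1}. The fibre count for $(1)\Leftrightarrow(2)$ and the identity $\{[x,y] : y\in G\} = x^{-1}x^G$ for $(1)\Leftrightarrow(4)$ are elementary and correctly justified. In $(1)\Rightarrow(3)$ you tacitly use $x^h\notin N$, which holds because $xN\neq N$ and $N$ is normal.

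Your approach has a useful by-product: conditions $(1)$, $(2)$, $(4)$, $(5)$ are shown equivalent for each fixed $x\in G\setminus N$, not only globally. In particular you prove directly the single-element statement ($xN\subseteq x^G$ if and only if every character in $\irr{G\mid N}$ vanishes at $x$). The paper imports that statement from Lemma~3 of \cite{flat} in the proof of Theorem~\ref{standard conj}.

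One cosmetic point: your opening sentence announces the cycle $(1)\Rightarrow(2)\Rightarrow(3)\Rightarrow(4)\Rightarrow(5)\Rightarrow(1)$. What you actually prove is $(1)\Leftrightarrow(2)$, $(1)\Leftrightarrow(3)$, $(1)\Leftrightarrow(4)$ and $(2)\Leftrightarrow(5)$. That suffices, so only the announcement needs changing.
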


Now we can see that Gagola groups and Camina pairs are closely related. The following lemma is known, but we believe that it has not been explicitly proved in the literature; as it will be helpful to have it, we provide an explicit proof.   In particular, this shows that groups with a Gagola character are precisely the Camina pairs where there is a unique irreducible character lying in $\irr {G \mid N}$.  Note that these Camina pairs have $N$ that is minimal normal, so in particular, $N$ is an elementary abelian $p$-group for some prime $p$.

\begin{lem}\label{gagola pairs}
Let $N\neq 1$ be a proper normal subgroup of a group $G$ and let $\chi \in \irr G$.  Then the following are equivalent:
\begin{enumerate}[$(1)$]
\item $\chi$ is a Gagola character for $G$  and $N$ is the unique minimal normal subgroup of~$G$.
\item $(G,N)$ is a Camina pair and $\irr {G \mid N} = \{ \chi \}$.
\end{enumerate}	
\end{lem}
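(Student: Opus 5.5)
The plan is to prove the two implications separately, using Theorem~\ref{gagola} for (1)$\Rightarrow$(2) and column orthogonality for (2)$\Rightarrow$(1).

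\textbf{(1)$\Rightarrow$(2).} Assume $\chi$ is a Gagola character and $N$ is the unique minimal normal subgroup of $G$. Then $|G|>2$, because $\chi$ is nonlinear. So Theorem~\ref{gagola} applies.

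\begin{itemize}
\item By part (c) of that theorem, the minimal normal subgroup appearing there is our $N$.
\item By (e), $\chi$ vanishes exactly on $G\setminus N$, so the two classes on which $\chi$ is nonzero are $\{1\}$ and $N\setminus\{1\}$. Also $N\not\le\kernel(\chi)$, since $\chi$ is faithful by (b).
\item By (f), $N\setminus\{1\}$ is a single conjugacy class $x^G$.
\end{itemize}

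Since $\chi$ is constant on $N\setminus\{1\}$ and vanishes off $N$, the first orthogonality relation gives
\[|G| = \chi(1)^2 + |N\setminus\{1\}|\,|\chi(x)|^2.\]
Next take any $\psi\in\irr{G\mid N}$. Its restriction $\psi_N$ is constant on $N\setminus\{1\}$ and is not a multiple of $1_N$. Hence $\psi_N$ contains no copy of $1_N$, which forces $\psi(x)=-\psi(1)/(|N|-1)$.

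Now apply column orthogonality to the column of $1$ and the column of $x$:
\[\sum_{\psi}\psi(1)\psi(x)=0.\]
The characters in $\irr{G/N}$ contribute $|G:N|$. The characters in $\irr{G\mid N}$ contribute $-\sum_{\psi\in\irr{G\mid N}}\psi(1)^2/(|N|-1)$. Therefore
\[\sum_{\psi\in\irr{G\mid N}}\psi(1)^2=|G:N|(|N|-1)=|G|-|G:N|.\]

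Apply the same relation to the column of $x$ with itself and compare. The aim is to show that $\chi$ already exhausts this sum, so $\irr{G\mid N}=\{\chi\}$.

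A shorter route is to use a more precise form of the cited results, namely Gagola's Lemma 2.1 as quoted via \cite[Theorem 4.16]{Lewis}. That statement includes that $\chi$ is the only character not containing $N$ in its kernel, and that $(G,N)$ is a Camina pair because the zeros of $\chi$ are exactly $G\setminus N$. Given that, condition (5) of Lemma~\ref{Camina pair-equiv} holds, so $(G,N)$ is a Camina pair. I would try to derive uniqueness directly from the computation above, and fall back on the cited statement if it does not close.

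\textbf{(2)$\Rightarrow$(1).} Assume $(G,N)$ is a Camina pair and $\irr{G\mid N}=\{\chi\}$.

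\begin{itemize}
\item By Lemma~\ref{Camina pair-equiv}(5), $\chi$ vanishes on $G\setminus N$.
\item $N$ is minimal normal: if $1<K<N$ with $K$ normal in $G$, some character of $G/K$ would lie in $\irr{G\mid N}$ and differ from $\chi$, since $\ker\chi\cap N=1$. Here $\ker\chi\cap N=1$ because every irreducible character of $G$ nontrivial on $N$ equals $\chi$, so $\bigcap_{\psi}\ker\psi=1$ gives $\ker\chi\cap N=1$.
\item $N$ is the unique minimal normal subgroup: any minimal normal $L\ne N$ satisfies $L\cap N=1$, so $L\not\le\ker\chi$. Then some $\psi\in\irr{G/N}$ is nontrivial on $L$, and likewise $\chi$ is faithful on $L$. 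The kernel intersection $\ker\chi\cap\bigcap_{\psi\in\irr{G/N}}\ker\psi$ is trivial and $\ker\chi\cap N=1$. The cleanest way to finish this point is to note $\ker\chi\cap N=1$ and that $\ker\chi$ is normal, so if $\ker\chi\ne1$ it contains a minimal normal $L\ne N$. Then $\chi$ is trivial on $L$, while $L\cap N=1$ makes $LN/L$ a nontrivial normal subgroup of $G/L$. Some irreducible character of $G/L$ is nontrivial on $LN/L$, and that character lies in $\irr{G\mid N}$, so it equals $\chi$. This gives no contradiction yet, so I would instead show $\chi$ is faithful via the column sum: with $\chi(g)=0$ off $N$, the column orthogonality of $k\in\ker\chi\setminus N$ against $1$ gives $\sum_{\psi\in\irr{G/N}}\psi(1)\psi(k)=-\chi(1)^2\neq0$. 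But $k\notin N$ makes this sum $0$ by orthogonality in $G/N$, so $\ker\chi\le N$, hence $\ker\chi=1$.
\item $\chi$ is nonlinear: if $\chi(1)=1$, then $N\cap G'=1$, and minimality together with the Camina condition force $|G|=2$, contrary to $N$ being proper and nontrivial.
\end{itemize}

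\textbf{Main obstacle.} The key remaining step is showing that $\chi$ is nonzero on exactly two classes, namely $\{1\}$ and $N\setminus\{1\}$. The Camina condition gives $gN\subseteq g^G$, but it does not immediately give $N\setminus\{1\}$ as one class. I would use orthogonality: for $x,y\in N\setminus\{1\}$, the columns of $x$ and $y$ satisfy
\[\sum_{\psi\in\irr{G/N}}\psi(1)^2+\chi(x)\overline{\chi(y)}=0\ \text{ or }\ |\Centralizer_G(x)|.\]
The value $\chi(x)$ is determined by the first relation as $-|G:N|/\chi(1)$, independent of $x$. So the left side is $|G:N|+|G:N|^2/\chi(1)^2$, which is nonzero. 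Hence $x$ and $y$ are conjugate, so $N\setminus\{1\}$ is a single class and $\chi$ is a Gagola character.
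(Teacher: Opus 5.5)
Your strategy works, and for the central step of (2)$\Rightarrow$(1) you use a genuinely different argument from the paper. The paper notes that $\irr{G\mid N}=\{\chi\}$ means $G$ has exactly two orbits on $\irr{N}$. It then invokes Brauer's permutation lemma (Corollary~6.33 of \cite{Isaacs}) to get two orbits on the classes of $N$, so $N\setminus\{1\}$ is a single class.

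You instead obtain $\chi(x)=-|G:N|/\chi(1)$ for every $x\in N\setminus\{1\}$ by orthogonality against the identity column. The inner product of the columns of any $x,y\in N\setminus\{1\}$ is then $|G:N|+|G:N|^2/\chi(1)^2\neq 0$, which forces $x$ and $y$ to be conjugate. This is the regular-character argument the paper uses for Theorem~\ref{gagola 2}, (3)$\Rightarrow$(1). It is more elementary, avoids Brauer's lemma, and gives the value of $\chi$ on $N\setminus\{1\}$. It also shows the Camina hypothesis is needed only to make $\chi$ vanish off $N$. Even that is automatic, since $\chi=(\rho_G-\rho^*)/\chi(1)$ with $\rho^*$ the inflated regular character of $G/N$.

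For (1)$\Rightarrow$(2), the paper compares $\theta^G$ with $\chi$ for a constituent $\theta$ of $\chi_N$ and uses Gagola's transitivity on the nontrivial characters of $N$. You work instead from Theorem~\ref{gagola} and orthogonality.

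Several steps do not close as written.

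\textbf{Uniqueness of $\chi$ in (1)$\Rightarrow$(2).} Once $\psi(x)=-\psi(1)/(|N|-1)$ for $\psi\in\irr{G\mid N}$, both column relations you propose are tautologies. They reduce to $\sum_{\psi\in\irr G}\psi(1)^2=|G|$ and $|x^G|=|N|-1$, so they cannot single out $\chi$. The row relation you already wrote does close it. Since $\chi$ is faithful, $\chi(x)=-\chi(1)/(|N|-1)$, and then $|G|=\chi(1)^2+(|N|-1)|\chi(x)|^2$ gives $\chi(1)^2=|G:N|(|N|-1)=\sum_{\psi\in\irr{G\mid N}}\psi(1)^2$. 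Hence $\irr{G\mid N}=\{\chi\}$. A shorter route: because $N$ is the unique minimal normal subgroup, every $\psi\in\irr{G\mid N}$ is faithful, so $\psi=\chi$ by Theorem~\ref{gagola}(b). Either way, no fallback citation is needed.

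\textbf{Uniqueness of the minimal normal subgroup in (2)$\Rightarrow$(1).} Your bullet never reaches a contradiction, and the detour through faithfulness is unnecessary. If $L\neq N$ is minimal normal, then $L\cap N=1$. For $1\neq l\in L$, the Camina condition gives $lN\subseteq l^G\subseteq L$, so $N\le L$, which is absurd. Alternatively, once your last step shows $\chi$ is a Gagola character, Theorem~\ref{gagola}(c) together with the minimality of $N$ gives uniqueness.

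\textbf{Nonlinearity of $\chi$.} This is immediate: $\chi$ vanishes on the nonempty set $G\setminus N$, and linear characters have no zeros. Your route via $N\cap G'=1$ ending in ``$|G|=2$'' is not actually derived.
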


\begin{proof}
Suppose first that $G$ has a Gagola character $\chi$ and $N$ is the unique minimal normal subgroup of $G$.  Let $\theta \in \irr N$ be an irreducible constituent of $\chi_N$.  We 
know that $\theta^G$ vanishes on $G \setminus N$.  
Moreover, setting $m = [\chi_{N},\theta]$, taking $T$ to be the stabilizer of $\theta$ in $G$, writing $t = |G:T|$, and denoting by $\theta_1, \dots, \theta_t$ the distinct $G$-conjugates of $\theta$, we have $\chi_N = m \sum_{i=1}^{t} \theta_i$.  Since we also have $(\theta^G)_N = |T:N| \sum_{i=1}^t \theta_i$, we see that $(\theta^G)_N$ is a complex multiple of $\chi_N$.  

Since $\chi$ is a Gagola character, we know that it also vanishes on $G \setminus N$. This implies that $\theta^G$ is a complex multiple of $\chi$. In other words, $\chi$ is the unique irreducible constituent of $\theta^G$.  By Lemma 2.1 of \cite{Gagola}, we know that $\theta$ is conjugate to all of the nontrivial characters of $N$, and so, $\irr {G \mid N} = \{ \chi \}$.  Since $\chi$ vanishes on $G \setminus N$, we see that every irreducible character in $\irr {G \mid N}$ vanishes on $G \setminus N$, and so by Lemma \ref{Camina pair-equiv}, we have that $(G,N)$ is a Camina pair.

Conversely, suppose that $(G,N)$ is a Camina pair and $\irr {G \mid N} = \{ \chi \}$.  Thus, the action of $G$ on $\irr N$ has two orbits.  Using Corollary 6.33 of \cite{Isaacs}, we see that the action of $G$ on the conjugacy classes of $N$ has two orbits.  This implies that all of the nonidentity elements of $N$ are conjugate.  It follows that $\chi$ vanishes on all but two conjugacy classes $\{1\}$ and $N \setminus \{ 1\}$, which implies that $G$ has a Gagola character and $N$ is the unique minimal normal subgroup of $G$.
\end{proof}


\section {Conjugacy classes that are the differences of two normal subgroups} \label{secn:between}

In this section, we prove the results regarding a conjugacy class which is the difference of two normal subgroups.  We begin with the special case where the smaller normal subgroup is the trivial subgroup, and hence, the larger normal subgroup is a minimal normal subgroup.   This is the more general version of the situation studied by Gagola.  In Gagola's scenario, the minimal normal subgroup is the unique minimal normal subgroup of $G$.  We are not assuming that the minimal normal subgroup is unique.  Thus, it is not surprising that we obtain a weaker conclusion. 

\begin{lem}\label{min standard conj}
Let $G$ be a group and let $M$ be a normal subgroup of $G$.  Suppose $g \in M \setminus \{ 1 \}$.  Then the following are true:
\begin{enumerate}[{\rm (a)}]
\item If $g^G = M \setminus \{ 1 \}$, then $M$ is a minimal normal subgroup, and is an elementary abelian $p$-group for some prime $p$.  Also, $G/\Centralizer_G (M)$ acts transitively on $M \setminus \{ 1 \}$.
\item $g^G = M \setminus \{ 1 \}$ if and only if every character in $\irr {G}$ is constant on $M \setminus \{ 1 \}$.
\end{enumerate}   
\end{lem}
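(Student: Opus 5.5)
For part (a), I will use Lemma~\ref{lem:M le G} with $N=1$ and $C=g^G=M\setminus\{1\}$. Part (c) of that lemma says $M/1$ is a chief factor, so $M$ is a minimal normal subgroup of $G$. Next, every nonidentity element of $M$ is conjugate to $g$, so all of them have the same order. Take a prime $p$ dividing $|M|$; by Cauchy's theorem $M$ contains an element of order $p$. Hence every nonidentity element of $M$ has order $p$, and $M$ is a $p$-group. A nontrivial $p$-group has a nontrivial centre, and $\Center(M)$ is characteristic in $M$, hence normal in $G$. Minimality then gives $\Center(M)=M$, so $M$ is abelian. An abelian group of exponent $p$ is elementary abelian. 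Finally, $g^G=M\setminus\{1\}$ says exactly that $G$ acts transitively by conjugation on $M\setminus\{1\}$. This action factors through $G/\Centralizer_G(M)$, which gives the last claim.

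For part (b), the forward direction is immediate: characters are class functions, so every $\chi\in\irr G$ is constant on the single class $g^G=M\setminus\{1\}$. For the converse, suppose every $\chi\in\irr G$ is constant on $M\setminus\{1\}$, and let $h\in M\setminus\{1\}$. Then $\chi(h)=\chi(g)$ for all $\chi\in\irr G$. Since $\irr G$ spans the space of class functions, equivalently the class indicator functions are separated by irreducible characters, $h$ and $g$ must be conjugate. Concretely, I will use the second orthogonality relation:
\[
\sum_{\chi\in\irr G}\chi(g)\overline{\chi(h)}=\sum_{\chi\in\irr G}|\chi(g)|^2=|\Centralizer_G(g)|\neq 0,
\]
and this sum is $0$ unless $h$ is conjugate to $g$. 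Hence $M\setminus\{1\}\subseteq g^G$. The reverse inclusion $g^G\subseteq M\setminus\{1\}$ holds because $M$ is normal and $g\neq 1$, so $g^G=M\setminus\{1\}$.

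Neither part presents a real obstacle. The only point needing care is the step in (a) from "all nonidentity elements conjugate" to "elementary abelian $p$-group". The equal-orders argument combined with the characteristic centre handles it, and it avoids invoking solvability of minimal normal subgroups in general. Alternatively, equal element orders give prime exponent $p$, so $M$ is a $p$-group, and a minimal normal $p$-subgroup is elementary abelian.
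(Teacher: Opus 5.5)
Your proof is correct and follows the paper's argument in substance: Cauchy's theorem plus the fact that conjugate elements have equal orders for (a), and the fact that $\irr G$ separates conjugacy classes for the converse in (b), which you make explicit via the second orthogonality relation. The differences are small improvements. Your forward direction in (b) is the immediate class-function observation, whereas the paper takes a detour through showing that $\chi_M$ is a multiple of $\rho_M - 1_M$. Your characteristic-centre argument, after getting minimality from Lemma~\ref{lem:M le G}(c), supplies the abelianness that the paper simply asserts when it passes from ``exponent $p$'' to ``elementary abelian''.
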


\begin{proof}
Suppose that $g^G = M \setminus \{ 1 \}$.  By Cauchy's theorem, we know that some element of $M$ has order $p$ for some prime $p$, and since all the elements in $M \setminus \{ 1 \}$ are conjugate, all nonidentity elements of $M$ have order $p$, and so, $M$ is an elementary abelian $p$-group.  We see that $G$ acts transitively on $M \setminus \{ 1 \}$.   It follows that $G/\Centralizer_G (M)$ acts transitively on $M \setminus \{ 1 \}$, and $M$ is a minimal normal subgroup of $G$.   

Thus, we see that if $g^G = M \setminus \{ 1 \}$, then $G$ acts transitively on the nonprincipal characters in $\irr M$.   Hence, if $\chi \in \irr {G \mid M}$, then the irreducible constituents of $\chi_M$ consist of all of the nonprincipal irreducible characters of $M$.  We use the fact that characters are class functions.  Because $\chi$ is a class function, so it follows that $\chi_M$ is a multiple of $\rho_M - 1_M$ where $\rho_M$ is the regular character of $M$, and thus, $\chi_M$ is constant on $M \setminus \{ 1 \}$.  This shows that all of the characters in $\irr {G \mid M}$ are constant on $M \setminus \{1 \}$.  On the other hand, all of the characters in $\irr {G/M}$ are constant on $M$.  Therefore, we can use the fact that $\irr G = \irr {G/M} \cup \irr {G \mid M}$ to see that if all characters in $\irr {G \mid M}$ are constant on $M \setminus \{ 1 \}$, then so are all characters of $G$.  This proves the forward direction of (b).

Conversely, suppose that every character in $\irr {G}$ is constant on $M \setminus \{ 1 \}$.  Now, because the irreducible characters form a basis for class functions of $G$, we deduce that all of the nonidentity elements of $M$ are $G$-conjugate, and so, $g^G = M \setminus \{ 1 \}$.
\end{proof}

We now consider the general situation of a conjugacy class that is the difference of two normal subgroups.  Observe that this theorem includes Theorem \ref{intro-standard conj} from the Introduction.

\begin{thm}\label{standard conj}
Let $G$ be a group and let $N < M$ be normal subgroups of $G$ with $g \in M \setminus N$.  Then the following are equivalent:
\begin{enumerate}[$(1)$]
\item $g^G = M \setminus N$. 
\item  Every character in $\irr {G/N}$ is constant on $M/N \setminus \{ N \}$ and every character in $\irr {G \mid N}$ vanishes on $g$.
\item  $|G:\Centralizer_G (g)| = |M| - |N|$.
\item  $gN$ is conjugate in $G/N$ to every element in $M/N \setminus \{ N \}$ and $g$ is $C$-conjugate to every element in $gN$ where $C/N = \Centralizer_{G/N} (gN)$. 
\end{enumerate}
\end{thm}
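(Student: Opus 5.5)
We prove the equivalences in the order $(1)\Leftrightarrow(3)$, $(1)\Leftrightarrow(4)$ and $(1)\Leftrightarrow(2)$. The first two are elementary counting and conjugation arguments; the last uses character theory.

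\emph{$(1)\Leftrightarrow(3)$.} Since $M$ and $N$ are normal and $g\in M\setminus N$, Lemma~\ref{conj containment} (or directly the normality of $N$) gives $g^G\subseteq M\setminus N$. Hence $|g^G|=|G:\Centralizer_G(g)|\le |M|-|N|$, with equality if and only if $g^G=M\setminus N$.

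\emph{$(1)\Leftrightarrow(4)$.} Assume (1). Every element $mN\neq N$ of $M/N$ has a representative $m\in M\setminus N$, and $m$ is conjugate to $g$, so $gN$ is conjugate to $mN$ in $G/N$. Now take $x\in gN$. Then $x=g^y$ for some $y\in G$, and $gN=xN=(gN)^y$. Thus $yN$ centralizes $gN$, which means $y\in C$. Conversely, assume (4) and let $m\in M\setminus N$. There is $h\in G$ with $m^hN=gN$, so $m^h\in gN$. Hence $m^h$ is $C$-conjugate to $g$, and therefore $m\in g^G$.

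\emph{$(2)\Rightarrow(1)$.} Let $m\in M\setminus N$. Since $gN$ and $mN$ lie in $M/N\setminus\{N\}$, every $\psi\in\irr{G/N}$ satisfies $\psi(m)=\psi(g)$. We also need $\psi(m)=0$ for every $\psi\in\irr{G\mid N}$, not just $\psi(g)=0$. To get this, use the second orthogonality relation:
\[|\Centralizer_G(g)|=\sum_{\psi\in\irr G}|\psi(g)|^2=\sum_{\psi\in\irr{G/N}}|\psi(g)|^2=|\Centralizer_{G/N}(gN)|.\]
Since all nontrivial elements of $M/N$ have equal character values in $G/N$, they form a single $G/N$-class. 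Therefore
\[|G:\Centralizer_G(g)|=|G/N:\Centralizer_{G/N}(gN)|\,|N|=(|M/N|-1)|N|=|M|-|N|,\]
and (3) holds, hence (1).

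\emph{$(1)\Rightarrow(2)$.} Under (1), each $\psi\in\irr G$ is constant on $M\setminus N$. In particular the classes of $G/N$ in $M/N\setminus\{N\}$ collapse to one, so characters of $G/N$ are constant there. For $\psi\in\irr{G\mid N}$, we compute $\psi_M$ against the characters of $M/N$. Let $\theta=\psi_M$. It takes the value $\psi(g)$ on all of $M\setminus N$. Then
\[\sum_{m\in M}\theta(m)=\sum_{n\in N}\psi(n)+(|M|-|N|)\psi(g).\]
The left side is $|M|[\theta,1_M]$, and $\sum_{n\in N}\psi(n)=|N|[\psi_N,1_N]$. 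By Clifford's theorem, $\psi_N$ has no trivial constituent, since its constituents are conjugate and $N\not\le\ker\psi$. Hence $\sum_{n\in N}\psi(n)=0$. Also $[\psi_M,1_M]=0$, because $\psi_M$ has constituents lying over nontrivial characters of $N$. So $(|M|-|N|)\psi(g)=0$, and $\psi(g)=0$.

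The main subtlety is the direction $(2)\Rightarrow(1)$. Hypothesis (2) only states that $\psi(g)=0$ for $\psi\in\irr{G\mid N}$, so one cannot directly compare the values of $g$ and $m$ on these characters. The centralizer-order computation is what bypasses this, and I would put it first in that part of the proof.
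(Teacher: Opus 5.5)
Your proof is correct. For $(1)\Leftrightarrow(3)$ and $(1)\Leftrightarrow(4)$ you argue exactly as the paper does. The genuine difference is in $(1)\Leftrightarrow(2)$.

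The paper splits condition (1) into two independent pieces. The first is that all nontrivial cosets in $M/N$ are conjugate in $G/N$, which it handles by Lemma~\ref{min standard conj}. The second is that $g$ is conjugate to every element of $gN$, which by Lemma~3 of \cite{flat} is equivalent to every character of $\irr{G \mid N}$ vanishing on $g$. It then combines these two facts in both directions.

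You avoid that external lemma. For $(2)\Rightarrow(1)$, you use the second orthogonality relation to get $|\Centralizer_G(g)|=|\Centralizer_{G/N}(gN)|$ and then reach the index condition (3). This is the same computation the paper itself carries out later, in the proof of Theorem~\ref{mutual 1}. For $(1)\Rightarrow(2)$, you sum $\psi$ over $M$ and use Clifford's theorem to get $[\psi_M,1_M]=[\psi_N,1_N]=0$, which forces $(|M|-|N|)\psi(g)=0$.

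Your version is self-contained and elementary. The paper's is more modular: it isolates the character-theoretic counterpart of the coset-fusion condition $gN\subseteq g^G$, which makes sense without reference to $M$. Your $(1)\Rightarrow(2)$ argument uses the full strength of (1), namely that $\psi$ is constant on all of $M\setminus N$. The cited lemma only needs $g$ to be conjugate to everything in $gN$.

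A minor expository point: in $(2)\Rightarrow(1)$ you say you ``need'' $\psi(m)=0$ for $\psi\in\irr{G\mid N}$. You never prove this directly; you bypass it by passing through (3), and it then follows a posteriori. That sentence is misleading and could be removed.
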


\begin{proof}
Suppose $g^G = M \setminus N$. 
Then we have $|G:\Centralizer_G (g)| = |M \setminus N| = |M| - |N|$, which is (3).  Conversely, suppose $|G:\Centralizer_G(g)| = |M| - |N|$.  This implies that $|g^G| = |M| - |N|$.  Since $M$ and $N$ are normal in $G$ and $g \in M \setminus N$, we have $g^G \subseteq M \setminus N$. But since $|g^G|=|M|-|N|=|M\setminus N|$, we must have $g^G = M \setminus N$. Therefore, (1) and (3) are equivalent. 

Next, we show that (1) is equivalent to (2). If $g^G=M\setminus N$, then $gN$ is conjugate in $G/N$ to all of the nonidentity elements of $M/N$.  By Lemma \ref{min standard conj}, we know that all the characters in $\irr{G/N}$ are constant on $M/N \setminus \{ N \}$.  Notice that $g$ is conjugate to every element of $gN$.  Thus, the results of \cite{flat} apply.  By Lemma 3 of that paper, every character in $\irr {G \mid N}$ vanishes on $g$.  This implies (2).   Conversely, suppose (2).  Since every character in $\irr {G \mid N}$ vanishes on $g$, we know from Lemma 3 of \cite{flat} that $g$ is conjugate to all of $gN$.  On the other hand, since every character in $\irr {G/N}$ is constant on $M/N \setminus \{ N \}$, we apply Lemma \ref{min standard conj} to see that $gN$ is conjugate to all of the cosets in $M/N \setminus \{ N \}$.  Combining these two results, we obtain the conclusion that $g$ is conjugate to all of the elements in $M \setminus N$, and this proves (1).	

Finally, we show that (1) is also equivalent to (4).  Again, $gN$ is conjugate in $G/N$ to all of the nonidentity elements of $M/N$.  Also, if $h \in gN$, then there exists $x \in G$ so that $g^x = h$.  It follows that $(gN)^x = g^xN = hN = gN$ and so, $x \in C$.  This implies (4).   Suppose (4).  Thus, we know that $gN$ is conjugate in $G/N$ to all of the nonidentity elements of $M/N$.   Combining this with the fact that $g$ is $C$-conjugate to every element in $gN$, we see that $g$ is conjugate to all of $M \setminus N$.  This is Condition (1).
\end{proof}

Observe that if $G$ is any group having a normal subgroup $M$ of order $2$ then, setting $N$ to be the trivial subgroup and $g$ to be the nonidentity element of $M$, we have that $M$, $N$ and $g$ satisfy Condition (1) (and hence all the conditions) of Theorem \ref{standard conj}. The other conditions are also easily checked in this case. In fact, since $M \setminus \{ 1 \} = \{ g \}$, every character in $\irr {G/N}=\irr G$ is automatically constant on $M \setminus \{ 1 \}$; also, as $\irr {G \mid N}$ is empty, the condition that every character in this set vanishes on $g$ is vacuously met, so Condition (2) holds as well.  Note that $g$ will be central in $G$ in this case, so $\Centralizer_G (g) = G$ and $|G:\Centralizer_G (g)| = 1 = |M| - |\{ 1 \}|$, and Condition (3) holds.  Finally, Condition (4) is also obviously met in this situation. 

Next, we are going to obtain some structural results in this situation.  We saw in Lemma~\ref{min standard conj} that $M/N$ will be an elementary abelian $p$-group for some prime $p$.  Surprisingly perhaps, we show that $g$ itself must have $p$-power order.  Since all of the elements of $M \setminus N$ are conjugate, this implies that all of the elements in $M \setminus N$ have $p$-power order.  This is a situation that is studied by the first author in \cite{Lewis}.  We describe one of the situations that arise in this case in the next paragraph.

As part of the next result, we need to discuss Frobenius-Wielandt triples.  In \cite{wielandt}, H. Wielandt considers a group $G$ that has a proper, nontrivial subgroup $H$ and a normal subgroup $L$ of $H$ that is proper in $H$ for which $H \cap H^g \le L$ for all $g \in G \setminus H$.  (Sometimes $L$ is chosen to be minimal among all subgroups that satisfy these conditions, and if we need to add this assumption, we will.  However, in general, we will allow any subgroup $L$ that satisfies the conditions.)  A. Espuelas in \cite{espuelas} calls a group $G$ with such subgroups a {\it Frobenius-Wielandt} group.  However, we will follow \cite{burkett} and say that $(G,H,L)$ is a {\it Frobenius-Wielandt triple}.  When $L = 1$, it is easy to see that $H$ is a Frobenius complement; so this is a generalization of Frobenius groups.  In \cite{wielandt}, Wielandt proves that there is a unique normal subgroup $N$ determined by $H$ and $L$ so that $G = NH$ and $N \cap H = L$.  In fact, the subgroup $N$ satisfies $N = G \setminus \cup_{g \in G}(H \setminus L)^g$, and $N$ is called the {\it Frobenius-Wielandt kernel}.  Note when $L = 1$ that $N$ is the usual Frobenius kernel.  Wielandt also proves in that paper that $(|G:H|,|H:L|) = 1$.  We note that Section 4 of \cite{KnSch} gives a different proof of Wielandt's results.

We will also prove that $M$ is solvable and has a normal $p$-complement.  To see that $G$ does not need to be solvable, we use the observation above that if $M$ has order $2$ and $g$ is the nonidentity element of $M$, then $M$, the trivial subgroup, and $g$ will satisfy our hypotheses.  Notice that this situation occurs when $G = {\rm SL}_2 (q)$ for any odd prime power $q$, and so long as $q \ge 5$, we know that ${\rm SL}_2 (q)$ is not solvable.  This next theorem includes Theorem \ref{intro-standard res} from the Introduction.

\begin{thm}\label{standard res}
Let $G$ be a group, let $N < M$ be normal subgroups of $G$, and suppose that $g \in M \setminus N$ is conjugate to every element in $M \setminus N$.   Then there is a prime $p$ so that the following are true:
\begin{enumerate}[{\rm (a)}] 
\item $g$ has $p$-power order; so every element in $M \setminus N$ has $p$-power order.
\item  Either $M$ is a $p$-group or $M$ is a Frobenius-Wielandt group with Frobenius-Wielandt kernel $N$.	
\item $M$ is solvable and has a normal $p$-complement.
\end{enumerate}
\end{thm}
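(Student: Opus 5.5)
The plan is to prove (a) by a short argument with $p$-parts and $p'$-parts, then use (a) to build a Frobenius--Wielandt structure for (b), and to deduce (c) from that structure. By Lemma~\ref{min standard conj}(a) applied to $G/N$, the quotient $M/N$ is an elementary abelian $p$-group for some prime $p$; this fixes $p$.

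For (a), write $g = g_p g_{p'}$ with commuting $p$-part and $p'$-part. Since $M/N$ is a $p$-group, $g_{p'} \in N$, so $g_pN = gN \neq N$. Hence $g_p \in M \setminus N$. By hypothesis $g_p$ is then conjugate to $g$, so $o(g) = o(g_p)$ and $g_{p'} = 1$. Thus $g$, and with it every element of $M \setminus N$, is a $p$-element. The same trick gives more: if $y \in \Centralizer_M(g)$ is a $p'$-element, then $y \in N$ (its image in $M/N$ is trivial), and $gy \in M\setminus N$ has $p'$-part $y$. By (a) this forces $y = 1$. So $\Centralizer_M(x)$ is a $p$-group for every $x \in M \setminus N$; equivalently, each element of $M\setminus N$ acts fixed-point-freely on the $p'$-elements of $N$.

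For (b), if $N$ is a $p$-group then $M$ is a $p$-group and we are done. Otherwise, let $P \in {\rm Syl}_p(M)$, $H = P$ and $L = P \cap N$, so that $P$ covers $M/N$ and $L < P$. I will verify the Frobenius--Wielandt condition $P \cap P^x \le L$ for $x \in M \setminus P$. Suppose $u \in (P\cap P^x)\setminus N$. I would use the fixed-point-free action of $u$ on the $p'$-part, via coprime action and the centralizer computation above, to force $x \in P$. Here I expect to use the Sylow structure together with the fact that $\Centralizer_M(u)$ is a $p$-group, and I expect this verification to need some care. Granting it, Wielandt's theorem gives a normal kernel $K = M \setminus \bigcup_x (P\setminus L)^x$ with $M = KP$ and $K\cap P = L$. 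Every element of $M\setminus N$ is a $p$-element and hence lies in some conjugate of $P$, and it lies outside $N$; this gives $\bigcup_x (P \setminus L)^x = M\setminus N$, so $K = N$.

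For (c), Wielandt's coprimeness $(|M:P|,|P:L|)=1$ together with $N \cap P = L$ suggests taking a normal $p$-complement of $N$ as the normal $p$-complement of $M$. The main obstacle is solvability, and in particular producing that normal $p$-complement inside $N$ and proving it is nilpotent. My first attempt is Thompson's theorem on fixed-point-free automorphisms of prime order. The difficulty is that an element $x \in P\setminus L$ need not have order $p$; only $x^p \in N$ is guaranteed. I would therefore pass to the action of $\langle x\rangle L/L$, which has order $p$, on a suitable $x$-invariant Hall $p'$-subgroup of $N$. The step I expect to need most work is showing that this induced action is fixed-point-free, using that every element of the coset $xN$ is conjugate to $x$. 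Thompson's theorem then gives nilpotency of the $p'$-part, and since $M/(\text{$p'$-part})$ is a $p$-group, $M$ is solvable.
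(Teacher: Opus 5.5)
Your argument for (a) is correct. It is a fine alternative to the paper's, which instead uses $M=NP$ for a Sylow $p$-subgroup $P$ of $M$ to conjugate $g$ into $P$. Your observation that $\Centralizer_M(x)$ is a $p$-group for every $x\in M\setminus N$ is also correct.

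In (b), however, the verification of $P\cap P^x\le L$ is the entire content, and you only assert it. The tool you name, the fixed-point-free action on ``the $p'$-part'', presupposes a normal $p$-complement, which you plan to obtain in (c) from (b). As written, the argument is circular. The step can be done without (c):
\begin{enumerate}
\item Let $D\not\le N$ be a $p$-subgroup of $M$ and $y$ a $p'$-element of $\norm{M}{D}$. Then $y\in N$, so $[D,y]\le D\cap N$, and coprime action gives $D=\Centralizer_D(y)(D\cap N)$.
\item Any $u\in\Centralizer_D(y)\setminus N$ gives $y\in\Centralizer_M(u)$, which is a $p$-group, so $y=1$.
\item Hence $\norm{M}{D}$ is a $p$-group. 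This yields $\norm{M}{P}=P$ and, via the usual maximal-Sylow-intersection argument, $P\cap P^x\le N$ whenever $P^x\neq P$.
\end{enumerate}
The paper instead quotes Theorem~1.1 of \cite{primepower} here.

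The genuine gap is (c). Nothing in your plan produces a normal $p$-complement. A Hall $p'$-subgroup of $N$ is not known to exist at that stage. The Frobenius--Wielandt structure by itself does not supply one either: $(S_4,D_8,V_4)$ is a Frobenius--Wielandt triple whose kernel $A_4$ has no normal $2$-complement. The Thompson route also fails, for three reasons:
\begin{enumerate}
\item $L$ need not centralize the $p'$-part, so $\langle x\rangle L/L$ does not act on it.
\item Every element of $P\setminus L$ has the order of $g$, which can exceed $p$, so there is no fixed-point-free automorphism of prime order to work with.
\item The nilpotency you are aiming for is false.
\end{enumerate}

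Here is an example showing the last point. Let $Q_8=\langle\alpha,\beta\rangle$ act on $\langle t\rangle\cong C_3$ with $\alpha$ inverting $t$ and $\beta$ centralizing it, and set $H=\langle t\rangle\rtimes Q_8$. Let $V=\mathbb{F}_{13}^2$ be the $H$-module induced from a faithful linear character of $\langle t,\beta\rangle\cong C_{12}$. Then $\alpha^2=\beta^2$ acts as $-1$ on $V$ and $t$ acts nontrivially. In $G=V\rtimes H$ put $Q=V\langle t\rangle$, $M=Q\langle\alpha\rangle$ and $N=Q\langle\alpha^2\rangle$.
\begin{enumerate}
\item Since $\alpha$ inverts $Q/V$ and fixes no nonzero vector of $V$, coprime action gives $\Centralizer_Q(\alpha)=1$. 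Hence $\alpha^Q=\alpha Q$ and $(\alpha^{-1})^Q=\alpha^{-1}Q$.
\item Together with $\alpha^\beta=\alpha^{-1}$, this gives $\alpha^G=M\setminus N$.
\end{enumerate}
Here $p=2$, the elements of $M\setminus N$ have order $4$, and $L=\langle\alpha^2\rangle$ acts as $-1$ on $V$. The normal $2$-complement $Q\cong\mathbb{F}_{13}^2\rtimes C_3$ of $M$ is not nilpotent.

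What you are missing is the character-theoretic input the paper uses. By Theorem~\ref{standard conj}, every $\psi\in\irr{G\mid N}$ vanishes on the $p$-element $g$, so $p\mid\psi(1)$ by Remark~4.1 of \cite{DPSS}. Hence $p$ divides every degree in ${\rm cd}(G\mid N')$. Theorem~D of \cite{IK}, Berkovich's theorem relative to a normal subgroup, then shows that $N$, and therefore $M$, is solvable with a normal $p$-complement.
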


\begin{proof}
By Lemma \ref{min standard conj}, we know that $M/N$ is a $p$-group for some prime $p$.  Let $P$ be a Sylow $p$-subgroup of $M$.  Observe that $M \le NP$.  On the other hand, since $N$ and $P$ are contained in $M$, we have $NP \le M$; so $M = NP$, and in particular, $P$ is not contained in $N$.  Hence, $M \setminus N$ contains some element of $P$.  Since $g$ is conjugate to every element in $M \setminus N$, it follows that $g$ is conjugate to an element in $P$, and thus, $g$ (and therefore every element in $M \setminus N$) has $p$-power order.  This proves (a). 

By Theorem 1.1 in \cite{primepower}, we see that $M$ is either a $p$-group or a Frobenius-Wielandt group with Frobenius-Wielandt kernel $N$.  This is (b).

Consider a character $\psi \in \irr {G \mid N}$.  By Theorem \ref{standard conj} (2), we know that $\psi$ vanishes on $g$.  Since $g$ has $p$-power order and $\psi$ vanishes on $g$, we see using Remark 4.1 of \cite{DPSS} that $p$ must divide $\psi (1)$.   Observe that ${\rm cd} (G \mid N')$ is a subset of ${\rm cd} (G \mid N)$ and so, $p$ divides every degree in ${\rm cd} (G \mid N')$.  Using a Theorem of Berkovich (Theorem D of \cite{IK}), $N$ is solvable and has a normal $p$-complement, and since $M/N$ is an abelian $p$-group, we conclude that $M$ is solvable and has a normal $p$-complement.  This is (c).
\end{proof}

We next show that the only $p$-groups for which such a conjugacy class can occur is when $p = 2$.  This next result includes Theorem \ref{intro-conj p groups} from the Introduction.  

\begin{cor} \label{conj p groups}
Let $G$ be a $p$-group for some prime $p$, and let $N < M$ be normal subgroups of $G$ with $g \in M \setminus N$ so that $g^G = M \setminus N$.  Then $p = 2$, $|M:N| = 2$, and $g^G = gN$.
\end{cor}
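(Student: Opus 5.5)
Since $G$ is a $p$-group, $|g^G|$ is a power of $p$. On the other hand, $g^G = M\setminus N$, so
\[
|g^G| = |M|-|N| = |N|\,(|M:N|-1).
\]
Here $|M:N|$ is a positive power of $p$ because $N<M$, so $|M:N|-1$ is prime to $p$. It must nevertheless divide a power of $p$, and hence
\[
|M:N| - 1 = 1, \qquad\text{that is,}\qquad |M:N| = 2.
\]
Because $|M:N|=2$ and $M$ is a $p$-group, $p=2$.

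For the last claim, $M/N$ has order $2$, so $M\setminus N$ is the single nontrivial coset of $N$ in $M$, namely $gN$. Therefore $g^G = M\setminus N = gN$.

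There is no real obstacle here: the key observation is simply that $|M|-|N|$ must be a $p$-power. No character theory is needed; this is just the counting in condition (3) of Theorem~\ref{standard conj}.
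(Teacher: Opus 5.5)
Your proof is correct and is essentially the paper's argument: both use $|g^G| = |M|-|N| = |N|(|M:N|-1)$ (condition (3) of Theorem~\ref{standard conj}) and note that $|M:N|-1$ is prime to $p$ yet divides a $p$-power, so it equals $1$. The only difference is that the paper first handles the case $M\not\le G'$ separately via Lemma~\ref{lem:M le G}(a); your argument shows this split is unnecessary, since the count works in every case.
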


\begin{proof}
Let $|M| = p^a$ and $|N| = p^b$ for integers $a > 0$ and $b \ge 0$.  If $M$ is not contained in $G'$, then the conclusion follows from Lemma \ref{lem:M le G}.  Thus, we may assume $M \le G'$.  By Theorem \ref{standard conj} (3), we know that $|G:\Centralizer_G (g)| = p^a - p^b = p^b (p^{a-b} - 1)$.  This implies that $p^{a-b} - 1$ must equal $1$ since it must be a power of $p$.  However, the only way $p^{a-b} - 1$ can equal $1$ is if $p = 2$ and $a-b = 1$ which implies $a = b+1$, and so, $|M:N| = 2$.  Since $|M:N| = 2$, we know that $g^G = M \setminus N = gN$, and the result is proved.
\end{proof}

Thus, for $2$-groups, we see that we can simplify the character condition that is equivalent for the conjugacy class to be the difference of two normal subgroups.

\begin{cor} \label{conj 2 groups}
Let $G$ be a $2$-group, let $N < M$ be normal subgroups of $G$ so that $M/N$ is minimal normal in $G/N$, and let $g \in M \setminus N$.  Then the following are equivalent:
\begin{enumerate}[$(1)$]
\item $g^G = M \setminus N$. 
\item Every character in $\irr {G/N}$ is constant on $M/N \setminus \{ N \}$ and every character in $\irr {G \mid N}$ vanishes on $g$.
\item Every character in $\irr {G \mid N}$ vanishes on $g$.
\end{enumerate}
\end{cor}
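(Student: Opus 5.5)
The equivalence of (1) and (2) is exactly Theorem \ref{standard conj}, and (2) trivially implies (3). So the only real work is (3) $\Rightarrow$ (1) (or (3) $\Rightarrow$ (2)), and the plan is to use the hypothesis that $M/N$ is minimal normal in $G/N$ together with the $2$-group structure.

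First I will show that $|M:N| = 2$. Since $G/N$ is a $p$-group and $M/N$ is minimal normal in it, $M/N$ is central in $G/N$ and has order $p = 2$. Hence $M \setminus N = gN$, and every character of $G/N$ is automatically constant on the single nonidentity element $gN$ of $M/N$. This already gives the first half of (2), so (3) implies (2). Theorem \ref{standard conj} then gives (1).

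To keep the argument self-contained, I can also derive (1) directly from (3). By Lemma 3 of \cite{flat}, as used in the proof of Theorem \ref{standard conj}, the vanishing of every character in $\irr {G \mid N}$ at $g$ means that $g$ is conjugate to every element of $gN$. Since $gN = M \setminus N$, this is (1).

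There is no real obstacle here. The only point needing care is the standard fact that a minimal normal subgroup of a $p$-group has order $p$: it meets the center nontrivially, and any subgroup of order $p$ inside that intersection is normal, so by minimality it is the whole minimal normal subgroup. In our case this is applied to $M/N$ in $G/N$ with $p = 2$.
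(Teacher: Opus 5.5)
Your proposal is correct and follows the paper's proof: (1)$\Leftrightarrow$(2) comes from Theorem~\ref{standard conj}, (2)$\Rightarrow$(3) is trivial, and for (3)$\Rightarrow$(1) you use $|M:N|=2$ (so $M\setminus N = gN$) together with Lemma~3 of \cite{flat}. Your alternative route via (3)$\Rightarrow$(2) is also valid, since $M/N\setminus\{N\}$ is the single coset $gN$, so every character of $G/N$ is trivially constant on it.
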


\begin{proof}
We know from Theorem \ref{standard conj} that (1) and (2) are equivalent.  Observe that if (2) occurs then obviously (3) occurs.  Thus, we assume (3).  Since $M/N$ is minimal normal, we have $|M:N| = 2$, and so, $M \setminus N = gN$.  By Lemma 3 of \cite{flat}, we know that the condition that all characters in $\irr{G \mid N}$ vanish on $g$ imply that $g$ is conjugate to all of $gN$.  Hence, we have that $g$ is conjugate to all of $M \setminus N$, and (1) holds.
\end{proof}

Notice that if $G$ is a $2$-group, $M$ is a minimal normal subgroup, and $g$ is the nonidentity element of $M$, then $\{ g \} = M  \setminus \{ 1 \}$ is a conjugacy class of $G$.  Thus, this situation occurs in every $2$-group with some frequency.  Notice that if $G$ is an extra-special $2$-group and $Z = \Center (G)$, $Z \le M  < G$ so that $|M :Z| = 2$, and $g \in M \setminus Z$, then $g^G = gZ = M  \setminus Z$.  So, this situation also occurs when $N > 1$. 

We now work to prove Theorems \ref{intro-mutual} and \ref{intro-mutual 1}.  Before we do this, we need the following refinement of Gagola's theorem.  Notice that by Lemma \ref{min standard conj} the subgroup $M$ in this next theorem is necessarily minimal normal.  On the other hand, by Theorem \ref{gagola}, we know that since $G$ is a Gagola group, that $G$ has a unique minimal normal subgroup and there is a unique irreducible character that satisfies the Gagola property.  This implies that the $M$ and $\chi$ will be uniquely defined in all parts of this theorem.  Similarly, $g$ is uniquely determined up to conjugacy.

\begin{thm}\label{gagola 2} 
Let $G$ be a group with $|G|>2$. Then the following are equivalent:
\begin{enumerate}[$(1)$]
\item There exists a normal subgroup $M$ so that $1 < M \le G$ with $g\in M$ such that $g^G = M \setminus \{ 1 \}$, and $|\irr {G \mid M}| = 1$.
\item  $G$ is a Gagola group with Gagola character $\chi$.
\item There exists a normal subgroup $M$ so that $1 < M \le G$ and a unique character $\chi$ so that $\irr {G \mid M} =\{ \chi \}$. 
\end{enumerate}
If $G$ and $\chi$ satisfy these conditions, then $\chi$  does not vanish on $M \setminus \{ 1 \}$. 
\end{thm}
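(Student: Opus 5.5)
We prove $(1)\Rightarrow(3)\Rightarrow(1)$, $(1)\Rightarrow(2)$ and $(2)\Rightarrow(1)$, and check the final claim at the end.

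\emph{$(1)\Rightarrow(3)$.} This is immediate: take $\chi$ to be the unique element of $\irr{G \mid M}$.

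\emph{$(3)\Rightarrow(1)$.} Suppose $\irr{G\mid M}=\{\chi\}$. Then $\chi_M = e\sum_i \theta_i$, where the $\theta_i$ run over a single $G$-orbit in $\irr M$. Every nonprincipal $\theta\in\irr M$ lies under some character of $\irr{G\mid M}$, so this orbit is all of $\irr M\setminus\{1_M\}$. Hence $G$ has exactly two orbits on $\irr M$. By Corollary 6.33 of \cite{Isaacs}, $G$ then has exactly two orbits on the conjugacy classes of $M$, namely $\{1\}$ and $M\setminus\{1\}$ in the sense that all nonidentity classes of $M$ are fused. Thus $M\setminus\{1\}$ is a single $G$-class $g^G$, which gives (1).

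\emph{$(1)\Rightarrow(2)$.} By Lemma \ref{min standard conj}, $M$ is minimal normal and elementary abelian. Let $\chi$ be the unique element of $\irr{G\mid M}$. Apply the second orthogonality relation to the column of $g$:
\[
\sum_{\psi\in\irr G}|\psi(g)|^2=|\Centralizer_G(g)|=\frac{|G|}{|M|-1}.
\]
Every $\psi\in\irr{G/M}$ satisfies $\psi(g)=\psi(1)$, so these terms contribute $|G:M|$. Therefore
\[
|\chi(g)|^2=\frac{|G|}{|M|-1}-\frac{|G|}{|M|}=\frac{|G|}{|M|(|M|-1)}=\frac{|G:M|}{|M|-1}>0 .
\]
Now look at the first column: $\sum_{\psi}\psi(1)^2=|G|$ gives $\chi(1)^2=|G|-|G:M|$. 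Column orthogonality between the columns of $1$ and $g$ gives $\chi(1)\chi(g)=-|G:M|$.

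Next we show $\chi$ vanishes off $M$. Since $\chi$ is the only character in $\irr{G\mid M}$, we have $\rho_G-\sum_{\psi\in\irr{G/M}}\psi(1)\psi=\chi(1)\chi$. The left side is $\rho_G-\rho_{G/M}$ inflated, and it vanishes on $G\setminus M$. Hence $\chi$ vanishes on $G\setminus M$. Since $\chi$ is constant on the class $M\setminus\{1\}$, it is nonzero only on the two classes $\{1\}$ and $g^G$. It remains to check that $\chi$ is nonlinear, so that it is a Gagola character. We have $\chi(1)^2=|G:M|(|M|-1)$. If this equals $1$, then $|G:M|=1$ and $|M|=2$, so $|G|=2$, which is excluded.

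\emph{$(2)\Rightarrow(1)$.} By Theorem \ref{gagola}, $G$ has a unique minimal normal subgroup $M$, and $M\setminus\{1\}$ is a single class $g^G$ by part (f). Lemma \ref{gagola pairs} then gives $\irr{G\mid M}=\{\chi\}$.

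The final claim, that $\chi$ does not vanish on $M\setminus\{1\}$, follows from the formula $|\chi(g)|^2=|G:M|/(|M|-1)>0$ above, or equivalently from Theorem \ref{gagola}(e).

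The main obstacle is bookkeeping in $(1)\Rightarrow(2)$. In particular, we must ensure that $\chi$ vanishes off $M$ and is nonlinear, and both points use the regular-character identity together with the hypothesis $|G|>2$.
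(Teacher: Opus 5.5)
Your proof is correct, but it is organized differently from the paper's, which runs the cycle $(1)\Rightarrow(2)\Rightarrow(3)\Rightarrow(1)$.

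\textbf{$(1)\Rightarrow(2)$.} You use the same key identity as the paper, $\rho_G-\rho_{G/M}=\chi(1)\chi$, to see that $\chi$ vanishes off $M$. Your extra orthogonality computations give $\chi(1)\chi(g)=-|G:M|$ and $\chi(1)^2=|G:M|(|M|-1)$. These check explicitly that $\chi(g)\neq 0$ and that $\chi$ is nonlinear. That is exactly where the hypothesis $|G|>2$ is needed, and the paper leaves it implicit.

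\textbf{$(3)\Rightarrow(1)$.} The paper stays with regular characters. Evaluating $\rho_G-\rho_{G/M}$ on $M\setminus\{1\}$ gives $\chi=-|G:M|/\chi(1)$ there. Hence every irreducible character is constant on $M\setminus\{1\}$, so these elements are conjugate. This is more elementary than your Clifford theory plus Brauer permutation lemma argument, which is the one the paper uses in the converse of Lemma~\ref{gagola pairs}. It also yields the final nonvanishing claim in the same stroke.

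\textbf{Returning from (2).} The paper argues $(2)\Rightarrow(3)$ directly. Every member of $\irr{G\mid M}$ is faithful because $M$ is the unique minimal normal subgroup, and $\chi$ is the unique faithful irreducible character by Theorem~\ref{gagola}(b). Your route through Lemma~\ref{gagola pairs} also works. However, that lemma assumes the normal subgroup is proper, so you should say why $M<G$. This holds because $M$ is abelian by Theorem~\ref{gagola}(d), while $G$ is not, since $\chi$ is nonlinear.
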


\begin{proof}
Suppose (1) occurs, and let $\chi$ be the unique character in $\irr {G \mid M}$.  Let $\rho$ be the regular character for $G$, let $\rho^*$ be the regular character for $G/M$ inflated to $G$, and observe that $\rho = \rho^* + \chi (1) \chi$, so $\chi = (\rho - \rho^*)/\chi (1)$.  If $g \in G \setminus M$, then both $\rho$ and $\rho^*$ vanish at $g$, so $\chi (g) = 0$.  Thus, $\chi$ vanishes on every nonidentity conjugacy class except $g^G$.  This proves (2).
	
Assume (2) occurs.  By Theorem \ref{gagola}, we know that $\chi$ is the unique faithful irreducible character of $G$ and $\chi$ does not vanish on $M$ where $M$ is the unique minimal normal subgroup of $G$ (see (b), (c), and (e) of Theorem \ref{gagola}).  On the other hand, any character in $\irr {G \mid M}$ will be faithful, so $\chi$ is the unique character in $\irr {G \mid M}$.  This proves (3).   

Suppose (3) occurs, so $\irr{G \mid M}= \{\chi\}$.  Let $\rho$ be the regular character for $G$, let $\rho^*$ be the regular character for $G/M$ inflated to $G$, and observe that $\rho = \rho^* + \chi (1) \chi$, so $\chi = (\rho - \rho^*)/\chi (1)$.  If $g \in M \setminus \{ 1 \}$, then $\rho (g) = 0$ and $\rho^* (g) =  |G/M|$, so $\chi(g) = -|G/M|/\chi (1)$. In particular, $\chi$ is constant on $M\setminus\{1\}$. On the other hand, all of the irreducible characters other than $\chi$ have $M$ in their kernel. It follows that all the irreducible characters of $G$ are constant on $M\setminus\{1\}$.  This implies that all of the nonidentity elements of $M$ are conjugate, which is (1).
\end{proof}

This next theorem proves more than Theorem \ref{intro-mutual}, under the assumption $|G/N|>2$.  As with Theorem \ref{gagola 2}, we note that in this next theorem, we see from Lemma \ref{min standard conj} that $M/N$ is a minimal normal subgroup of $G/N$ and since we obtain that $G/N$ is a Gagola group, we see from Theorem \ref{gagola} that $G/N$ has a unique minimal normal subgroup and a unique Gagola character.  Thus, $M$ and $\chi$ are uniquely determined in all parts of the theorem and $g$ is unique up to conjugacy.  

\begin{thm} \label{mutual 1}
Let $G$ be a group, and let $N$ be a normal subgroup of $G$ so that $|G/N|>2$. Then the following are equivalent:
\begin{enumerate}[$(1)$]
\item There is a normal subgroup $M$ of $G$ with $N < M$, and an element $g \in M \setminus N$, such that $g^G = M \setminus N$ and $|\irr {G/N \mid M/N}| = 1$. 
\item There is an element $g \in G \setminus N$, and $\chi\in\irr G$ with $N\leq\ker\chi$, such that $\chi$ vanishes on all of the $G$-conjugacy classes in $G \setminus N$ except $g^G$, and every character in $\irr {G \mid N}$ vanishes on $g$.
\item There is a normal subgroup $M$ of $G$ with $N < M$, and a unique $\chi\in\irr G$ that does not vanish on $M \setminus N$ and does not have the elements of $M \setminus N$ in its kernel.
\item $G/N$ is a Gagola group with Gagola character $\chi$. Also, $g$ is $C$-conjugate to every element in $gN$ where $C/N = \Centralizer_{G/N} (gN)$ and $gN$ is a representative of the nontrivial $G/N$-conjugacy class $\chi$ does not vanish on.
\item There is a normal subgroup $M$ of $G$ with $N < M$, and an element $g \in M \setminus N$, such that $|G:\Centralizer_G (g)| = |M| - |N|$ and $|\irr {G/N \mid M/N}| = 1$.  
\end{enumerate}
Furthermore, when the above conditions hold, we have $|{\rm C}_G (g)| = |{\rm C}_{G/N} (gN)|$.
\end{thm}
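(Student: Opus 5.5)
The plan is to pass to $\bar G = G/N$ and reduce everything to Theorem \ref{gagola 2} (applied to $\bar G$, which has $|\bar G|>2$) combined with Theorem \ref{standard conj}. The point is that Theorem \ref{standard conj} splits the condition $g^G = M\setminus N$ into two parts. The first is a statement about $\bar G$ alone: $gN$ is conjugate to every nonidentity element of $M/N$. The second is a statement about $g$ and $N$: every character in $\irr{G\mid N}$ vanishes on $g$, or equivalently $g$ is $C$-conjugate to all of $gN$, by Lemma 3 of \cite{flat}. I will prove $(1)\Leftrightarrow(5)$, then $(1)\Rightarrow(4)\Rightarrow(2)\Rightarrow(1)$, and finally $(1)\Leftrightarrow(3)$.

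The equivalence $(1)\Leftrightarrow(5)$ is immediate from $(1)\Leftrightarrow(3)$ of Theorem \ref{standard conj}.

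For $(1)\Rightarrow(4)$: by Theorem \ref{standard conj}(4), $gN$ is conjugate to every element of $M/N\setminus\{N\}$ and $g$ is $C$-conjugate to all of $gN$. The first fact, together with $|\irr{\bar G\mid \bar M}|=1$, is condition (1) of Theorem \ref{gagola 2} for $\bar G$. Hence $\bar G$ is a Gagola group with Gagola character $\chi$, and $\chi$ does not vanish on $\bar M\setminus\{1\}=(gN)^{\bar G}$.

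For $(4)\Rightarrow(2)$: Lemma 3 of \cite{flat} converts the $C$-conjugacy of $g$ to all of $gN$ into the statement that every member of $\irr{G\mid N}$ vanishes at $g$. By Theorem \ref{gagola}, $\chi$ vanishes on $\bar G$ off the two classes $\{1\}$ and $(gN)^{\bar G}$. So the classes of $G$ in $G\setminus N$ on which $\chi$ may be nonzero lie in the preimage $M\setminus N$ of $(gN)^{\bar G}$. The delicate point is that (2) asks that this preimage be a single $G$-class, namely $g^G$. This follows because $g$ is conjugate to all of $gN$ and $gN$ is conjugate in $\bar G$ to every coset in $\bar M\setminus\{1\}$, which gives $g^G=M\setminus N$.

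For $(2)\Rightarrow(1)$: $\chi$, viewed in $\irr{\bar G}$, vanishes off $\{1\}$ and $(gN)^{\bar G}$. It is nonlinear, since otherwise it would never vanish, forcing $|\bar G|\le 2$. So it is a Gagola character of $\bar G$. Set $M/N$ to be the unique minimal normal subgroup of $\bar G$. By Theorem \ref{gagola 2}(3)$\Rightarrow$(1), $(gN)^{\bar G}$ is the nonidentity part of $\bar M$ and $|\irr{\bar G\mid\bar M}|=1$; note that $\chi(gN)\ne0$ forces $gN\in\bar M$. Combined with the vanishing of $\irr{G\mid N}$ at $g$, Theorem \ref{standard conj}(2)$\Rightarrow$(1) gives $g^G=M\setminus N$.

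I expect condition (3) to be the main obstacle, because it does not obviously encode the vanishing of $\irr{G\mid N}$ at $g$. For $(1)\Rightarrow(3)$, take $\chi$ to be the unique character in $\irr{\bar G\mid\bar M}$. It is nonzero on $M\setminus N$ by Theorem \ref{gagola 2}. Characters of $\irr{G/M}$ contain $M\setminus N$ in their kernel. Characters of $\irr{G\mid N}$ vanish on $g^G=M\setminus N$. So $\chi$ is the unique character with the stated properties.

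For $(3)\Rightarrow(1)$: the characters in $\irr{G\mid N}$ have kernels not containing $N$, hence not containing $M\setminus N$, since $\ker\psi\supseteq M\setminus N$ would force $M\le\ker\psi$. So these characters are candidates in (3), and I would try the following. Suppose first that $\chi\in\irr{G\mid N}$. Then every character of $\bar G$ is constant on $M\setminus N$, so $M\setminus N$ is a union of cosets of $N$ all lying in one $\bar G$-class. Uniqueness of $\chi$ also forces all other members of $\irr{G\mid N}$, of which there is at least one more when $N>1$ by a degree count, to vanish on $M\setminus N$. I would then compare with the column orthogonality relations, using the regular-character decomposition as in the proof of Theorem \ref{gagola 2}, to derive a contradiction. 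This step is the one I am least sure of, and it may need the hypothesis $|G/N|>2$. Once $\chi\in\irr{\bar G}$ is established, all of $\irr{G\mid N}$ vanishes on $M\setminus N$, and $\irr{\bar G\mid \bar M}=\{\chi\}$. Theorem \ref{gagola 2}(3)$\Rightarrow$(1) and Theorem \ref{standard conj} then give (1).

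Finally, under the equivalent conditions, $(gN)^{\bar G}=\bar M\setminus\{1\}$, so $|g^G|=|M|-|N|=|N|\,|(gN)^{\bar G}|$. Hence $|\Centralizer_G(g)|=|G|/|g^G| = |\bar G|/|(gN)^{\bar G}|=|\Centralizer_{\bar G}(gN)|$.
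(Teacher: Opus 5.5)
Your steps $(1)\Leftrightarrow(5)$, $(1)\Rightarrow(4)\Rightarrow(2)\Rightarrow(1)$ and $(1)\Rightarrow(3)$ are correct, and so is the centralizer identity. Your $(1)\Rightarrow(3)$, read directly off the partition $\irr G=\irr{G/M}\cup\irr{G/N\mid M/N}\cup\irr{G\mid N}$, is more direct than the paper's $(4)\Rightarrow(3)$, which goes through $|\Centralizer_G(g)|=|\Centralizer_{G/N}(gN)|$ and the Second Orthogonality Relation.

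The problem is $(3)\Rightarrow(1)$, which you leave unproved at two points. The first point is easy, and your hint points the right way. Suppose $\chi\in\irr{G\mid N}$. Then every $\phi\in\irr{\bar G}$ differs from $\chi$, so at $x\in\bar M\setminus\{1\}$ either $\phi(x)=0$ or $\phi(x)=\phi(1)$. Column orthogonality then gives $0=\sum_{\phi\in\irr{\bar G}}\phi(1)\phi(x)\ge 1$, since the trivial character contributes $1$. This needs neither $|G/N|>2$ nor anything about the other members of $\irr{G\mid N}$. The ``degree count'' giving a second member of $\irr{G\mid N}$ is also not valid in general: $|\irr{Q_8\mid \Center(Q_8)}|=1$.

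The second point is a genuine missing step. After obtaining $\chi\in\irr{\bar G}$ you simply assert $\irr{\bar G\mid\bar M}=\{\chi\}$, which is exactly the input that Theorem~\ref{gagola 2}(3) requires. Condition (3) only tells you that any other $\psi\in\irr{\bar G\mid\bar M}$ vanishes on all of $\bar M\setminus\{1\}$. This holds because such a $\psi$ cannot contain $M\setminus N$ in its kernel, as $\langle M\setminus N\rangle=M$. You still have to show that no such $\psi$ exists. The paper does this with Clifford's theorem:
\begin{itemize}
\item $\psi_{\bar M}$ vanishes off the identity, so it is a positive multiple of the regular character of $\bar M$.
\item Hence $1_{\bar M}$ is a constituent of $\psi_{\bar M}$.
\item Since $1_{\bar M}$ is $G$-invariant, Clifford's theorem forces $\psi_{\bar M}$ to be a multiple of $1_{\bar M}$, that is, $\bar M\le\ker\psi$, a contradiction.
\end{itemize}
With these two arguments inserted, your finish via Theorem~\ref{gagola 2} and Theorem~\ref{standard conj} is valid. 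It is somewhat shorter than the paper's, which instead shows that $\chi$ vanishes off $M$ via $\theta^G$ and then invokes Lemmas~\ref{Camina pair-equiv} and~\ref{gagola pairs}.
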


\begin{proof}
Observe that the equivalence of (1) with (5) follows from  the equivalence of (1) and (3) in Theorem \ref{standard conj}.
Also, (1) implying (4) follows from from (1) implying (2) of Theorem \ref{gagola 2} and (1) implying (4) of Theorem \ref{standard conj}.  The implication of (4) implying (1) is Theorem \ref{gagola} with Theorem \ref{standard conj}.

	
Suppose now (4).  By Theorem \ref{gagola}, we see that $\chi$ is the unique character in $\irr {G/N}$ that does not contain $M/N$ in its kernel where $M/N$ is the unique minimal normal subgroup of $G/N$.   Since we know (4) is equivalent to (1) and (1) is equivalent to (5), we have $|G:\Centralizer_G (g)| = |M| - |N|$, so $\displaystyle |\Centralizer_G (g)| = \frac {|G|}{|M| - |N|}$.   Since $G/N$ is a Gagola group, we have $|G/N:\Centralizer_{G/N} (gN)| = |M/N| - 1$, so $\displaystyle |\Centralizer_{G/N} (gN)| = \frac {|G/N|}{|M/N|-1}$.  Multiplying the numerator and denominator by $|N|$ yields $\displaystyle \frac{|G/N|}{|M/N| -1} = \frac {|G|}{|M| - |N|}$.  We obtain $|\Centralizer_G (g)| = |\Centralizer_{G/N} (gN)|$ (which gives the last claim in our statement).  By the Second Orthogonality Relation (Theorem 2.18 of \cite{Isaacs}), 
we have
$$|\Centralizer_{G/N} (gN)| = \sum_{\phi \in \irr {G/N}} |\phi (gN)|^2 {\rm ~ and ~} |\Centralizer_G (g)| = \sum_{\gamma \in \irr G} |\gamma (g)|^2.$$
We see that 
$$|\Centralizer_G (g)| = \!\!\!\!\sum_{\gamma \in \irr G} \!\!\!\!|\gamma (g)|^2 = \!\!\!\!\sum_{\phi \in \irr {G/N}} \!\!\!\!|\phi (g)|^2 + \!\!\!\!\sum_{\psi \in \irr {G \mid N}} \!\!\!\!|\psi (g)|^2 = |\Centralizer_{G/N} (gN)| + \!\!\!\!\sum_{\psi \in \irr {G \mid N}} \!\!\!\!|\psi (g)|^2.$$
We deduce that $\displaystyle \sum_{\psi \in \irr {G \mid N}} \!\!\!\!|\psi (g)|^2 = 0$.  Since each $|\psi (g)|^2$ is a nonnegative real number, all these values must be zero.  Hence, we have that every irreducible character of $G$ other than $\chi$ either vanishes on $M \setminus N$ or has $M \setminus N$ in its kernel.  This is conclusion (3).
	
Next, suppose (3). Since, by the Second Orthogonality Relation, $\irr {G/N}$ must contain an irreducible character that neither vanishes on $gN$ nor has $gN$ in its kernel, we deduce that $\chi \in \irr {G/N}$.  Note that any other character $\psi$ in $\irr {G/N \mid M/N}$ will vanish on $M/N \setminus \{ N \}$. Since $\psi(1)\neq 0$, it follows that $\psi_{M/N}$ is a multiple of the regular character of $M/N$.  This implies that $\psi_M$ has both $1_M$ and all of the nonprincipal characters in $\irr {M/N}$ as irreducible constituents.  Now, we see that $1_M$ is conjugate to the nonprincipal character in $\irr {M/N}$ and this is a contradiction.  Thus, we see that $\chi$ is the only character that lies in $\irr {G/N \mid M/N}$, and hence, $\irr {G/N \mid M/N} = \{ \chi \}$. Suppose $\theta \in \irr {M/N}$ is nonprincipal.  Then $\theta^G$ is a multiple of $\chi$.  Since $\theta^G$ vanishes off of $M/N$, we deduce that $\chi$ vanishes off of $M/N$ as well.  Viewed as a character of $G$, we see that $\chi$ vanishes on $G \setminus M$.  By Lemmas \ref{Camina pair-equiv} and \ref{gagola pairs}, we see that $G/N$ has the Gagola character $\chi$. Furthermore, suppose $\psi \in \irr{G \mid N}$.  We know from (3) that either $\psi$ has $M \setminus N$ in its kernel or it vanishes on $M \setminus N$.  However, if it has $M \setminus N$ in its kernel, then it has $\langle M \setminus N \rangle$ in its kernel, and it is not difficult to see that $\langle M \setminus N \rangle = M$.  This implies that $\psi$ would have $N$ in its kernel.  Since we are assuming $N$ is not in the kernel of $\psi$, it follows that $\psi (g) = 0$.  Hence (2) holds.
	
Finally, suppose (2).  We see that $\chi$ is an irreducible character of $G/N$ that vanishes on all the nontrivial  conjugacy classes of $G/N$ except the class of $gN$, hence $\chi$ is a Gagola character of $G/N$ and thus, it has unique minimal normal subgroup of $G/N$ which we denote by $M/N$. It follows that $\chi$ is the only character lying in $\irr {G/N \mid M/N}$.  Using the Second Orthogonality Relation and the fact that the characters in $\irr {G \mid N}$ vanish on $g$, again we see that $|\Centralizer_{G/N} (gN)| = |\Centralizer_G (g)|$.  Since $G/N$ is a Gagola group, we have 
$$|G/N:\Centralizer_{G/N} (gN)| = |M:N| - 1.$$  
Now, 
\begin{align*}  |G:\Centralizer_G(g)| &= \frac {|G|}{|\Centralizer_G(g)|} \frac {|N|}{|N|} = \frac {|G/N|}{|\Centralizer_{G/N} (gN)|} |N| = \\ 
	&= |G/N:\Centralizer_{G/N} (gN)| |N| = (|M:N| - 1)|N| = |M| - |N|. 
\end{align*}   
This implies (5).
\end{proof}

It may be worth observing that, whenever the equivalent conditions of the above statement are satisfied, the normal subgroup $M$ which appears in conditions (1), (3) and (5) is necessarily a proper subgroup of $G$. This follows from the fact that all the nontrivial elements of $M/N$ are conjugate in $G/N$, and if $M=G$ this would imply $|G/N|=2$, against the assumptions.

To conclude the proof of Theorem~\ref{intro-mutual}, we have to consider the case when $G/N$ has order $2$ (hence, $M=G$). In this situation, it turns out that the three conditions of that theorem are all equivalent to $G$ being either of order $2$ or a Frobenius group with complements of order $2$. The following proposition should be compared with Proposition~2.1 of \cite{qsy}.

\begin{prop}\label{degenerate}
Let $G$ be a group, $N$ a subgroup of $G$ with $|G:N|=2$, and $g\in G\setminus N$. Then the following are equivalent:
\begin{enumerate}[$(1)$]
\item $g^G = G \setminus N$.
\item Every character in $\irr {G \mid N}$ vanishes on $g$.
\item There is a unique nontrivial irreducible character $\chi$ of $G$ that does not vanish on $G \setminus N$.
\item $|\Centralizer_G (g)| = 2$. 
\item Either $N$ is trivial (and $|G|=2$), or $G$ is a Frobenius group with kernel $N$ and $\langle g\rangle$ as a complement of order $2$. 
\end{enumerate}
\end{prop}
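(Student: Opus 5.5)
Since $|G:N|=2$, the subgroup $N$ is normal and $G\setminus N=gN$. I plan to prove the chain $(1)\Leftrightarrow(4)\Leftrightarrow(5)$ by counting and Frobenius arguments, and to link $(2)$ and $(3)$ to this chain through the Second Orthogonality Relation.

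$(1)\Leftrightarrow(4)$. This is Theorem \ref{standard conj} (3) with $M=G$: $g^G=G\setminus N$ holds if and only if $|G:\Centralizer_G(g)|=|G|-|N|=|N|$, that is, $|\Centralizer_G(g)|=2$.

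$(4)\Leftrightarrow(5)$. Assume (4). Then $g\in\Centralizer_G(g)$, so $\Centralizer_G(g)=\langle g\rangle$ and $g$ is an involution.
\begin{itemize}
\item If $N=1$, then $|G|=2$.
\item If $N\neq 1$, put $H=\langle g\rangle$. Then $H$ is self-normalizing, since its normalizer equals its centralizer. Hence $H\cap H^x=1$ for $x\notin H$, so $G$ is a Frobenius group with complement $H$.
\item The kernel of this Frobenius group is $G\setminus\bigcup_x (H\setminus 1)^x$. It has index $2$ and contains no conjugate of $g$, hence it lies in $N$ and so equals $N$.
\end{itemize}
Conversely, in a Frobenius group the centralizer of a nonidentity complement element lies in the complement, so $|\Centralizer_G(g)|=2$. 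The case $|G|=2$ is trivial.

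$(2)\Leftrightarrow(4)$. The quotient $G/N$ has order $2$, so $|\Centralizer_{G/N}(gN)|=2$. The Second Orthogonality Relation, split as in the proof of Theorem \ref{mutual 1}, gives
$$|\Centralizer_G(g)|=2+\sum_{\psi\in\irr{G\mid N}}|\psi(g)|^2 .$$
Thus $|\Centralizer_G(g)|=2$ if and only if every $\psi\in\irr{G\mid N}$ vanishes at $g$.

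$(3)$. The natural candidate for $\chi$ is the nontrivial linear character $\lambda$ of $G/N$, which never vanishes on $G\setminus N$.
\begin{itemize}
\item $(2)\Rightarrow(3)$: by the equivalences above, all of $G\setminus N=g^G$ is one class, so every $\psi\in\irr{G\mid N}$ vanishes on the whole of $G\setminus N$. Hence $\lambda$ is the unique nontrivial character not vanishing there.
\item $(3)\Rightarrow(2)$: since $\lambda$ qualifies, uniqueness forces $\chi=\lambda$, and every other nontrivial irreducible character, in particular every $\psi\in\irr{G\mid N}$, vanishes on $G\setminus N$ and so at $g$.
\end{itemize}

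The main obstacle is the identification of the Frobenius kernel with $N$ in $(4)\Rightarrow(5)$. I would handle it via the explicit description of the kernel as above, plus the count that the conjugates of $g$ number $|G:H|=|N|=|G\setminus N|$. A secondary point is reading ``nontrivial'' in (3) so that $\lambda$ is included. When $N=1$, $\lambda$ is the only nontrivial character and all conditions hold trivially.
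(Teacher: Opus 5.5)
Your proposal is correct and follows essentially the paper's route: a class-size count for (1), the Second Orthogonality Relation for (2) and (3), and $|\Centralizer_G(g)|=2$ as the hub that yields the Frobenius structure in (5). The differences are minor. You prove each link as a two-way equivalence, which makes explicit the converses the paper dismisses as ``easy to see''. You also identify the Frobenius kernel with $N$ through the self-normalizing complement $\langle g\rangle$ and the count $g^G=G\setminus N$, whereas the paper reads this off directly from $g$ acting fixed-point-freely on $N$.
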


\begin{proof} The condition $g^G=G\setminus N$ yields that $g^G$ contains half of the elements of $G$, which in turn implies $|{\rm C}_G(g)|=2$. The same conclusion is implied also by Condition (2): in fact, if all the
characters in $\irr {G \mid N}$ vanish on $g$, then the column in the character table of $G$ corresponding to $g$ consists of all zeros except the values taken on $g$ by the two irreducible characters of $G/N$ (namely, $1$ and $-1$). An application of the Second Orthogonality Relation yields now $|{\rm C}_G(g)|=2$. Similarly, Condition (3) implies that all the irreducible characters of $G$ vanish on $g$ except the trivial one and the nontrivial character of $G/N$, and again we get $|{\rm C}_G(g)|=2$ via the Second Orthogonality Relation.
Our conclusion so far is that each of (1), (2) and (3) implies (4).

Also, if Condition (4) holds, then $g$ has order $2$ and we get $G=N\langle g\rangle$ with $g$ not centralizing any nontrivial element of $N$; in other words, either $N$ is trivial and $|G|=2$, or $G$ is a Frobenius group with kernel $N$ and $\langle g\rangle$ as a complement, so we have Condition~(5).

Since it is easy to see that both a group $G$ of order $2$ and a Frobenius group $G$ with kernel $N$ and $\langle g\rangle$ as a complement of order $2$ satisfy all conditions (1), (2), (3) and (4), the proof is complete.
\end{proof}

The following result includes Theorem \ref{intro-mutual 1}.


\begin{thm} \label{mutual 2}
Assume Hypothesis $\ref{Hypothesis 1}$.   Then there is a prime $p$ so that the following are true:
\begin{enumerate}[{\rm (a)}]
\item  $M/N$ is an elementary abelian $p$-group.  Also, $gN$ is conjugate in $G/N$ to every coset in $M/N \setminus \{ N \}$.
\item $g$ has $p$-power order; so every element in $M \setminus N$ has $p$-power order.
\item  Either $M$ is a $p$-group or $M$ is a Frobenius-Wielandt group with $N$ the Frobenius-Wielandt kernel.
\item $M$ is solvable and has a normal $p$-complement.
\item $\displaystyle \chi (x) = \left\{ \begin{array}{ll}|G:M|_{p'} \sqrt {|G:M|_p} & {\rm if~~} x \in N \cr
	\displaystyle -\frac {|G:M|}{\chi (1)} = -\sqrt{|G:M|_p} & {\rm if~~} x \in g^G = M \setminus N \cr 
	0 & {\rm if~~} x \in G \setminus M. \end{array} \right.$ 
\item $\displaystyle |\Centralizer_G (g)| = \frac {|G:M| (|G:M| + \chi (1)^2)}{\chi (1)^2} = |\Centralizer_{G/N} (gN)| = |G/N|_p$.  In particular, $\Centralizer_G (g)$ is a $p$-group.
\end{enumerate}
Furthermore, if $|G/N|>2$, then we also have $M \le G'$.
\end{thm}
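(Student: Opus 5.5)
The plan is to work almost entirely in $\bar G=G/N$. Put $\bar M=M/N$ and $|\bar M|=p^n$. By Theorem~\ref{mutual 1} (or Theorem~\ref{gagola 2} when $|G/N|>2$), $\chi$ is a Gagola character of $\bar G$. Moreover $(\bar G,\bar M)$ is a Camina pair with $\irr{\bar G\mid \bar M}=\{\chi\}$ (Lemma~\ref{gagola pairs}).

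\textbf{Parts (a)--(d).} Part (a) is Lemma~\ref{min standard conj}(a) applied to $\bar G$ and $g^G=M\setminus N$. Parts (b), (c) and (d) are exactly Theorem~\ref{standard res}, whose hypothesis is $g^G=M\setminus N$.

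\textbf{The values of $\chi$.} I would use the regular-character trick from Theorem~\ref{gagola 2}. In $\bar G$ we have $\rho_{\bar G}=\rho^*_{G/M}+\chi(1)\chi$, which gives:
\begin{itemize}
\item $\chi(x)=0$ for $x\notin M$;
\item $\chi(x)=-|G:M|/\chi(1)$ for $x\in M\setminus N$;
\item $\chi(x)=\chi(1)$ on $N$.
\end{itemize}
Evaluating at the identity gives $\chi(1)^2=|G:M|(p^n-1)$. Next, $a:=|G:M|/\chi(1)=-\chi(g)$ is a rational algebraic integer, hence a positive integer. Consequently $a^2=|G:M|/(p^n-1)$.

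\textbf{The centralizer of $g$.} Apply the Second Orthogonality Relation in $\bar G$ at $gN$. The characters of $G/M$ are constant on $M$, so they contribute $\sum_{\phi\in\irr{G/M}}\phi(1)^2=|G:M|$. The character $\chi$ contributes $a^2$. This gives
\[
|\Centralizer_{\bar G}(gN)|=|G:M|+|G:M|^2/\chi(1)^2=|G:M|(|G:M|+\chi(1)^2)/\chi(1)^2=a^2p^n .
\]
By the last claim of Theorem~\ref{mutual 1} (its argument via orthogonality and the vanishing of $\irr{G\mid N}$ at $g$ works also when $|G/N|=2$), this equals $|\Centralizer_G(g)|$.

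\textbf{$\Centralizer_{\bar G}(gN)$ is a $p$-group.} I expect this to be the main step. Suppose $y\in\Centralizer_{\bar G}(gN)$ has prime order $q\neq p$. Then $y\notin\bar M$, since $\bar M$ is a $p$-group. Because $(\bar G,\bar M)$ is a Camina pair, $y\bar M\subseteq y^{\bar G}$, and Lemma~\ref{Camina pair-equiv}(2) gives $|\Centralizer_{\bar G}(y)|=|\Centralizer_{G/M}(y\bar M)|$. I would derive from this that $\Centralizer_{\bar M}(y)=1$. Concretely, the map $\Centralizer_{\bar G}(y)\to \Centralizer_{G/M}(y\bar M)$ is surjective by the Camina property, so by the equality of orders it is injective, and its kernel is $\Centralizer_{\bar M}(y)$. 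This contradicts $y$ centralizing $gN\neq 1$.

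Hence $|\Centralizer_{\bar G}(gN)|$ is a power of $p$. Since the class size $|\bar G:\Centralizer_{\bar G}(gN)|=p^n-1$ is prime to $p$, we get $|\Centralizer_{\bar G}(gN)|=|G/N|_p$, which proves (f). Then $a^2=|G/N|_p/p^n=|G:M|_p$, so $a=\sqrt{|G:M|_p}$. Finally $\chi(1)=|G:M|/a=|G:M|_{p'}\sqrt{|G:M|_p}$, which gives (e).

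\textbf{The final claim.} Suppose $|G/N|>2$ and $M\not\le G'$. By Lemma~\ref{lem:M le G}(a), $|M:N|=2$, and then $M\cap G'=N$ forces $\bar M\cap\bar G'=1$. So some linear character of $\bar G$ is nontrivial on $\bar M$. This character lies in $\irr{\bar G\mid\bar M}=\{\chi\}$, so $\chi$ is linear. Then $\chi(1)^2=|G:M|(2-1)$ gives $|G:M|=1$, hence $|G/N|=2$, a contradiction.
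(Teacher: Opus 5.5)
There is one genuine gap, in the step you single out as the main one. You assert that the map $\Centralizer_{\bar G}(y)\to\Centralizer_{\bar G/\bar M}(y\bar M)$ is surjective ``by the Camina property''. The Camina property only gives $|\Centralizer_{\bar G}(y)|=|\Centralizer_{\bar G/\bar M}(y\bar M)|$. Once the orders agree, surjectivity is equivalent to $\Centralizer_{\bar M}(y)=1$, which is exactly what you are trying to prove, so the step is circular. It is also false for Camina pairs in general, even Gagola ones. In $Q_8$ with $\bar M=\Center(Q_8)$ and $y=i$, the centralizer $\langle i\rangle$ has order $4=|\Centralizer_{Q_8/\Center(Q_8)}(i\Center(Q_8))|$. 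Yet its image in the quotient has order $2$, and $\Centralizer_{\bar M}(i)=\bar M$.

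What saves you is the coprimality you introduced but never used. Since $y$ has order $q\neq p$ and commutes with $gN$, which has order $p$, the element $y\cdot gN$ has order $pq$. But $y\cdot gN\in y\bar M\subseteq y^{\bar G}$ by the Camina property, so it has order $q$, a contradiction. Alternatively, set $T/\bar M=\Centralizer_{\bar G/\bar M}(y\bar M)$. Then $\bar M\langle y\rangle$ is normal in $T$ with Sylow $q$-subgroup $\langle y\rangle$. A Frattini argument together with $\langle y\rangle\cap y\bar M=\{y\}$ gives $T=\bar M\,\Centralizer_{\bar G}(y)$. So your surjectivity does hold here, but only because $q\neq p$.

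You should also handle $|G/N|=2$ explicitly. There $\bar M=\bar G$, so there is no Camina pair, and $\chi$ is linear rather than a Gagola character. However, $\bar G$ is then a $2$-group, so this step is vacuous.

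With that repair your proof is correct, and it is more self-contained than the paper's. The paper obtains $|\Centralizer_{G/N}(gN)|=|G/N|_p$ and the value of $\chi(1)$ by quoting Gagola's Corollary~2.3 and his computations. You derive them directly:
\begin{itemize}
\item the regular-character identity gives $\chi(1)^2=|G:M|(p^n-1)$ and $a=|G:M|/\chi(1)\in\mathbb{Z}$;
\item orthogonality gives $|\Centralizer_{\bar G}(gN)|=a^2p^n$;
\item the $p$-group property then forces $a^2=|G:M|_p$.
\end{itemize}
For $M\le G'$, the paper decomposes $G'M/N=G'/N\times M/N$, counts $G$-orbits of the characters $\theta\times\phi$, and treats $G'=N$ separately. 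Your argument is shorter and avoids the case split: a linear character of $\bar G$ nontrivial on $\bar M$ must be $\chi$, so $|G:M|=\chi(1)^2/(p^n-1)=1$.
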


\begin{proof}
Observe that Hypothesis \ref{Hypothesis 1} is Condition (1) of Theorem \ref{mutual 1}.  By Hypothesis \ref{Hypothesis 1}, we have that every element of $M \setminus N$ is conjugate to $g$, and so, we see that $gN$ is conjugate to all of the cosets in $M/N \setminus \{ N \}$.  By Lemma \ref{lem:M le G} (a), we have that either $M \le G'$ or $N = M \cap G'$ and $|M/N|=2$.   If $N = M \cap G'$, then $G'M/N = G'/N \times M/N$.  We see that $\irr {G'M/N \mid M/N} = \{ \theta \times \phi \mid  \theta \in \irr {G'/N}, \phi \in \irr {M/N}, \phi \ne 1 \}$.  Note that each $1 \times \phi$ and $\theta \times \phi$ will lie in different $G$-orbits when $\theta$ is not principal. This yields the contradiction that  $\irr {G/N \mid M/N}$ contains at least two characters \emph{unless $G'/N$ is trivial}. Thus, if $G'\neq N$, we get $M \le G'$. On the other hand, if $G'=N$, then $G/N$ is abelian, and our assumption $|\irr{G/N\mid M/N}|=1$ yields $|G/M|=|G/N|-1$, which easily implies $G=M$ (and $|G/N|=2$).

The fact that $M/N$ is an elementary abelian $p$-group is Lemma \ref{min standard conj} (a). 
%
We now obtain (b), (c), and (d) via Theorem \ref{standard res}.
	
	
Since $g \in M$, we have $gM = M$, and by the Second Orthogonality Relation in $G/M$, we see that $\displaystyle \sum_{\mu \in \irr {G/M}} \mu (g) \mu (1) = |G/M|$.    Applying Second Orthogonality Relation 
in $G/N$ where $gN$ is not conjugate to $N$, we have using the uniqueness of $\chi$ in Theorem \ref{mutual 1}:
$$0 = \sum_{\nu \in \irr {G/N}} \nu(g) \nu (1) = \sum_{\mu \in \irr {G/M}} \mu (g) \mu (1) + \chi (g) \chi (1) = |G/M| + \chi (g)\chi (1).$$
Solving for $\chi (g)$, we obtain $\chi (g) = -|G:M|/\chi (1)$.  
	
We know from Theorem \ref{mutual 1} that ${\rm C}_G (g)|=|\Centralizer_{G/N} (gN)|$, and, by Corollary 2.3 of \cite{Gagola}, we see that $|\Centralizer_{G/N} (gN)| = |G/N|_p$.  Notice that $\Centralizer_G (g)$ has $p$-power order, and so, $\Centralizer_G (g)$ is a $p$-group.  The computation of $\chi (1)$ is done in the last displayed equation on page 382 of \cite{Gagola}.  Note that the previous displayed equation on page 382 of \cite{Gagola} gives the fact that, for $x \in M$, $\chi (x) = -\sqrt {|G:M|_p}$.   We now have all of the parts of (e).
	
Finally, if we replace $\chi (g)$ by $-|G:M|/\chi (1)$ in $|\Centralizer_G (g)| = |G:M| + |\chi (g)|^2$, we obtain 
$$|\Centralizer_G (g)| = |G:M| + \left(\frac {-|G:M|}{\chi (1)}\right)^2 = \frac {|G:M|(|G:M| + \chi(1)^2)}{\chi (1)^2}.$$
This finishes conclusion (f). 
\end{proof}


We now outline properties of groups with a Gagola character.

\begin{cor}\label{gagola 1}
Suppose $G$ is a group with a Gagola character $\chi$, let $g$ be a representative of the nonidentity conjugacy class that $\chi$ does not vanish on, and take $M = \langle g^G \rangle$.  Then for some prime $p$:
\begin{enumerate}[{\rm (a)}]
\item $\displaystyle \chi (x) = \left\{ \begin{array}{ll} |G:M|_{p'} \sqrt {|G:M|_p} & {\rm if~~} x = 1 \cr
	\displaystyle -\frac {|G:M|}{\chi (1)} = -\sqrt{|G:M|_p} & {\rm if~~} x \in g^G = M \setminus \{ 1 \} \cr 
			0 & {\rm if~~} x \in G \setminus M. \end{array} \right.$
\item $\displaystyle |\Centralizer_G (g)| = \frac {|G:M| (|G:M| + \chi (1)^2)}{\chi (1)^2} = |G|_p$. 
\end{enumerate} 
\end{cor}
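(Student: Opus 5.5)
The plan is to derive this corollary as the special case $N = 1$ of Theorem~\ref{mutual 2}. That requires checking that Hypothesis~\ref{Hypothesis 1} holds with $N=1$ and with $M = \langle g^G\rangle$.

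First I will show that $M$ coincides with the unique minimal normal subgroup of $G$ and that $g^G = M\setminus\{1\}$. By Theorem~\ref{gagola}, since $\chi$ is nonlinear we have $|G|>2$. That theorem then gives a unique minimal normal subgroup, call it $K$, which is elementary abelian. It also gives that $\chi$ vanishes on $G\setminus K$ and does not vanish on $K$, and that $G$ acts transitively on $K\setminus\{1\}$.

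Since $\chi$ is nonzero on exactly two classes, and these lie in $K$, the classes are $\{1\}$ and $K\setminus\{1\}$. Hence $g^G = K\setminus\{1\}$. The group $\langle g^G\rangle$ is a nontrivial normal subgroup contained in $K$, so by minimality $M=K$ and $g^G=M\setminus\{1\}$.

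Next, the equivalence (2)$\Leftrightarrow$(3) of Theorem~\ref{gagola 2}, or Theorem~\ref{gagola} (b), gives $\irr{G\mid M}=\{\chi\}$. Therefore Hypothesis~\ref{Hypothesis 1} holds with $N=1$.

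Now I apply Theorem~\ref{mutual 2} (e) with $N=1$. Here the set $N$ is just $\{1\}$, which yields the displayed values of $\chi$ in (a).

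For (b), I use Theorem~\ref{mutual 2} (f). It gives $|\Centralizer_G(g)| = |G:M|(|G:M|+\chi(1)^2)/\chi(1)^2 = |G/N|_p$, and with $N=1$ this is $|G|_p$.

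The main point needing care is the identification $M=\langle g^G\rangle$ equal to the minimal normal subgroup, so that the $M$ in the statement matches the $M$ of the hypothesis. This is handled by the minimality argument above. The rest is direct specialization.
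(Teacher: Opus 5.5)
Your proposal is correct and follows the paper's route: the paper states this corollary without a separate proof, as the specialization $N=1$ of Theorem~\ref{mutual 2}. Your use of Theorem~\ref{gagola} to show that $M=\langle g^G\rangle$ is the unique minimal normal subgroup, that $g^G=M\setminus\{1\}$, and that $\irr{G\mid M}=\{\chi\}$ is exactly the check needed to put yourself in Hypothesis~\ref{Hypothesis 1}. Parts (e) and (f) of that theorem then give (a) and (b) directly.
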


\section{Proofs of Theorems \ref{th:character} and \ref{th:class} } \label{occurs 1}

We are ready to prove Theorem~\ref{th:character}. The following is an extended form of it.

\begin{thm} \label{detailedThm7}
Let $G$ be a group, and let $\chi$ be a nonlinear irreducible character of $G$.  Then the following properties are equivalent.
\begin{enumerate}[$(1)$]
\item There exists an element $g \in G \setminus \Center (G)$ such that $\chi$ vanishes on all noncentral conjugacy classes of $G$ except $g^G$.
\item We have $\Center (G) \leq \kernel (\chi)$, and there exists a noncentral element $g$ of $G$ such that, for every character $\psi \in \irr G$ with $\psi \ne \chi$, either $g \in \kernel (\psi)$ when $\psi \in \irr {G/\Center (G)}$ or otherwise $\psi (g) = 0$.
\item We have $\Center (G) \leq \kernel (\chi)$, and $\chi$ may be viewed as a character of $G/\Center (G)$ that vanishes on all but two conjugacy classes of $G/\Center (G)$, that is, $G/\Center (G)$ is a Gagola group.  In addition, if $\psi \in \irr {G \mid \Center (G)}$, then $\psi (g) = 0$ when $g \Center (G)$ is an element of the nontrivial conjugacy class of $G/\Center (G)$ that $\chi$ does not vanish on.  
\end{enumerate} 
Furthermore, if the above properties hold, then we have $\Center (G) = \kernel (\chi) < G'$, the value $\chi (g)$ is $-|\Center (G)|\chi(1)/|g^G| \in \mathbb{Z}$, and $|\Centralizer_G (g)| = |\Centralizer_{G/\Center (G)} (g\Center (G))|$ is a $p$-power for a suitable prime $p$. In particular, ${\rm Z}(G)$ is a $p$-group. Finally, if $M/{\Center(G)}$ denotes the unique minimal normal subgroup of $G/{\Center(G)}$, we have that $M\setminus{\Center(G)}$ is a conjugacy class of $G$.
\end{thm}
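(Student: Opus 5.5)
The plan is to reduce everything to Theorem \ref{mutual 1} applied with $N=\Center(G)$, after first showing that (1) forces $\Center(G)\le\kernel(\chi)$. Write $Z=\Center(G)$.

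\emph{(1) $\Rightarrow$ (3).} Assume (1). Since $\chi$ is irreducible, $\chi_Z=\chi(1)\lambda$ for some linear $\lambda\in\irr Z$. For $z\in Z$ we have $\chi(gz)=\lambda(z)\chi(g)$. If $\chi(g)\neq 0$ and $gz\notin g^G$, then $\chi(gz)\neq0$ on a noncentral class other than $g^G$, which is a contradiction. Hence $gZ\subseteq g^G$. The class $g^G$ is then a union of $Z$-cosets, so $\chi$ vanishes off $Z\cup g^G$, a union of $Z$-cosets.

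Next compute $[\chi,\chi]=1$ using $|\chi|=\chi(1)$ on $Z$ and $|\chi|=|\chi(g)|$ on $g^G$. This gives
\[
|G|=|Z|\chi(1)^2+|g^G||\chi(g)|^2 .
\]
Use also $[\chi,1_G]=0$. If $\lambda\neq1$, the sum over $Z$ vanishes, and since $g^G\supseteq gZ$ is stable under multiplication by $Z$, the sum over $g^G$ also vanishes (it is multiplied by $\lambda(z)$). So to get $Z\le\kernel(\chi)$ I instead use orthogonality of $\chi$ with the other constituents. A cleaner route is to consider $\sum_{x\in G}\chi(x)$ restricted to the coset $gZ$: it equals $\chi(g)\sum_z\lambda(z)$. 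But all of $gZ$ lies in one class, so $\chi$ is constant on $gZ$. This forces $\lambda(z)=1$ for all $z$, because $\chi(g)\ne0$. Hence $Z\le\kernel(\chi)$.

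Then $\chi$, viewed on $G/Z$, vanishes off $\{Z, gZ\text{-class}\}$, so $G/Z$ is a Gagola group; here nonlinearity gives $|G/Z|>2$. Moreover, $g$ being conjugate to all of $gZ$ gives $\psi(g)=0$ for $\psi\in\irr{G\mid Z}$ by Lemma 3 of \cite{flat}. This is (3).

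\emph{(3) $\Rightarrow$ (2) $\Rightarrow$ (1).} Assume (3). Then condition (4) of Theorem \ref{mutual 1} holds with $N=Z$: the $C$-conjugacy statement follows from the vanishing via \cite{flat} and the $C$-argument in Theorem \ref{standard conj}. Theorem \ref{mutual 1}(3) then says $\chi$ is the unique character not vanishing on $M\setminus Z$ and not containing it in its kernel, which is (2). For (2) $\Rightarrow$ (3), apply Theorem \ref{mutual 1}(2): the characters of $\irr{G/Z}$ other than $\chi$ contain $g$ in their kernel, hence are $1$ there, and column orthogonality at $gZ$ in $G/Z$ forces $\chi$ to be the only nontrivial contributor. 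I expect the main bookkeeping obstacle to be matching ``$g\in\kernel\psi$'' in (2) with ``vanishes on all classes except $g^G$'' in Theorem \ref{mutual 1}(2). I would handle it via Theorem \ref{gagola 2}, using that the kernel of such $\psi$ contains $\langle g^G\rangle Z=M$, together with the regular-character formula $\chi=(\rho-\rho^*)/\chi(1)$ on $G/Z$. Finally (3) $\Rightarrow$ (1) is immediate: $\chi$ vanishes off $M$ and $M\setminus Z=g^G$ by Theorem \ref{mutual 1}(1).

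\emph{Furthermore part.} Theorem \ref{mutual 2} (Hypothesis \ref{Hypothesis 1} holds with $N=Z$) gives $M\setminus Z=g^G$, $M\le G'$ (since $|G/Z|>2$), $|\Centralizer_G(g)|=|\Centralizer_{G/Z}(gZ)|=|G/Z|_p$, and $\chi(g)=-|G:M|/\chi(1)$. Now $|g^G|=|M|-|Z|$, and Gagola's formula $\chi(1)^2=|G:M|(|M:Z|-1)$ (from the norm computation above) converts this to $-|Z|\chi(1)/|g^G|$. Integrality follows since $\chi(g)=-\sqrt{|G:M|_p}$ is a rational algebraic integer, by Theorem \ref{mutual 2}(e). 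Also $Z\le\Centralizer_G(g)$ is a $p$-group.

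For $\kernel(\chi)=Z$: $\chi$ is faithful on $G/Z$ by Theorem \ref{gagola}(b). Then $Z<M\le G'$ gives $Z<G'$.
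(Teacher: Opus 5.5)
Your proof is correct, but the first implication takes a different route from the paper's; write $Z=\Center(G)$ below.

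\textbf{Your route.} For (1)$\Rightarrow$(3) you use a coset argument. The value $\chi(gz)=\lambda(z)\chi(g)$ is nonzero and $gz$ is noncentral, so $gZ\subseteq g^G$. Constancy of $\chi$ on $g^G$ then forces $\lambda=1_Z$. The same observation exhibits $\chi$ as a Gagola character of $G/Z$ and gives $\psi(g)=0$ for $\psi\in\irr{G\mid Z}$.

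\textbf{The paper's route.} It proves (1)$\Rightarrow$(2) with the first orthogonality relation. The equation $0=|G|[\chi,1_G]$ gives, in one line, both $Z\le\kernel(\chi)$ and the value $\chi(g)=-|Z|\chi(1)/|g^G|$, hence its integrality. The equations $[\psi,\chi]=0$ give, for each $\psi\neq\chi$, either $\psi(g)=\psi(1)$ or $\psi(g)=0$, according to whether $Z\le\kernel(\psi)$. So the paper lands in (2) directly. It then closes the cycle (2)$\Rightarrow$(3)$\Rightarrow$(1) via Theorem~\ref{gagola 2} and Theorem~\ref{mutual 1}, which is essentially what you do too.

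\textbf{Comparison.} Your argument is more structural: it shows that $g^G$ is a union of $Z$-cosets without any character sums. The cost is that you must pass through Theorem~\ref{mutual 1} to reach (2). You also have to recover $\chi(g)$ afterwards from Theorem~\ref{mutual 2}.

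\textbf{Two bookkeeping points.}

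First, Theorem~\ref{mutual 1}(3) only says that each $\psi\neq\chi$ satisfies $\psi(g)=0$ or $g\in\kernel(\psi)$. To get the sorted dichotomy required in (2), you need that every $\psi\in\irr{G/Z}$ other than $\chi$ contains $M$ in its kernel. This follows from Theorem~\ref{gagola}(b),(c) applied to $G/Z$, together with $gZ\in M/Z$ from Theorem~\ref{gagola}(e). For $\psi\in\irr{G\mid Z}$, the vanishing at $g$ is already part of (3).

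Second, the identity $\chi(1)^2=|G:M|(|M:Z|-1)$ is just the degree count $|G:Z|=|G:M|+\chi(1)^2$, coming from $\irr{G/Z\mid M/Z}=\{\chi\}$. The norm equation does not give it by itself. Combined with $\chi(g)=-|G:M|/\chi(1)$, it is a quadratic in $\chi(1)^2$ with the extra root $|G:M|$.

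Simpler still: once you have $Z\le\kernel(\chi)$, the relation $[\chi,1_G]=0$ reads $|Z|\chi(1)+|g^G|\chi(g)=0$. That is exactly the stated value of $\chi(g)$, and you had it in hand before switching to the coset argument.
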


\begin{proof}
Set $Z = \Center (G)$.  We first assume (1), that is, $\chi (x) = 0$ for all elements $x \in G \setminus (g^G \cup Z)$.  We have $\chi_Z = \chi(1) \lambda$ for some character $\lambda \in \irr Z$. Since $\chi \neq 1_G$, by the First Orthogonality Relation (Corollary 2.14 of \cite{Isaacs}), we have 
$$0 =|G| (\chi, 1_G) = \sum_{x \in G} \chi(x) = \sum_{x \in Z} \chi(x) + |g^G| \chi (g) = \chi (1) \sum_{x \in Z} \lambda (x) + |g^G| \chi (g).$$ 
Since $\chi (g) \neq 0$, the first sum cannot be zero so $\lambda = 1_Z$, which implies that $Z \leq \kernel (\chi)$. Moreover, we get $\chi(g)=-|Z|\chi(1)/|g^G|$, which is in $\mathbb{Z}$ because it is a rational algebraic integer.
	
Consider a character $\psi$ so that $\chi \neq \psi \in \irr G$. By the First Orthogonality Relation again, we have 
\[0 = \sum_{x \in G} \psi (x) \overline{\chi (x)} = \chi (1) \sum_{z \in Z} \psi(z) + |g^G| \psi (g) \chi (g).\] 
If $Z \leq \kernel (\psi)$, then $\sum_{z \in Z} \psi (z) = |Z| \psi(1)$, and hence, substituting the value of $\chi(g)$, we obtain $\psi(g) = \psi(1)$. On the other hand, if $Z \not\leq \kernel (\psi)$, then $\sum_{z \in Z} \psi (z) = 0$ so that $\psi (g) = 0$ and (2) follows.  
	
Next, assume (2).  Let $M = \langle g^G \rangle Z$.  We have $\chi \in \irr {G/Z}$ and if $\psi \in \irr {G/Z}$ with $\psi \ne \chi$, then $M \le \ker {\psi}$.  It follows that $\irr {G/Z \mid M/Z} = \{ \chi \}$.  Let $\rho_M$ be the regular character of $G/M$ and $\rho_Z$ be the regular character of $G/Z$.  Thus, $\rho_M = \sum_{\psi \in \irr {G/M}} \psi(1) \psi$ and $\rho_Z = \sum_{\psi \in \irr {G/M}} \psi (1) \psi + \chi (1) \chi = \rho_M + \chi (1) \chi$.  We know $g \in M$, so $\rho_M (g) = |G:M|$ and $g \not\in Z$, so $\rho_\Center (g) = 0$.  Hence, $0 = |G:M| + \chi (1) \chi (g)$, and so, $\chi (g) = -|G:M|/\chi (1) \ne 0$.  Hence, $\chi$ satisfies Condition (3) of Theorem \ref{gagola 2}.  It follows that $\chi$ is a Gagola character for $G/Z$.  Notice that if $\psi \in \irr {G \mid Z}$, then we must have that $\psi (g) = 0$.  Thus, (3) holds.  Notice that this also yields $\ker {\chi} = Z$.  

Next, assume (3).  Observe that this gives Condition (4) of Theorem \ref{mutual 1}.  Observe that Condition (2) of Theorem \ref{mutual 1} gives Condition (1) of this theorem.

Finally, assume that the above equivalent properties hold.  We have shown above that (3) is essentially (4) of Theorem \ref{mutual 1}.  This implies that $G$ satisfies Hypothesis \ref{Hypothesis 1} with respect to $N=\Center (G)$ and $M$ such that $M/\Center(G)$ is the unique minimal normal subgroup of $G/\Center(G)$. Thus, in particular $M\setminus\Center(G)$ is a conjugacy class of $G$. By Theorem \ref{mutual 2} we have $\Center (G) \le G'$ and $|\Centralizer_G (g)| = |\Centralizer_{G/\Center (G)} (g\Center (G))|$.  Note that since $G/\Center (G)$ is necessarily nonabelian, we obtain $\Center (G) < G'$.  
In Theorem \ref{mutual 2} (f) it is proved that $|\Centralizer_G (g)| = |G:\Center (G)|_p$ is a $p$-power; thus, since ${\rm Z}(G)$ is contained in ${\rm C}_G(g)$, we deduce that ${\rm Z}(G)$ is a $p$-group.  In Theorem \ref{mutual 2} (e) it is proved that $\chi (1) = |G:M|_{p'}\sqrt{|G:M|_p} = \chi (z)$ for all $z \in \Center (G)$.  Also, for $y \in M$, we have $\chi (y) = -|G:M|/\chi (1) = - \sqrt{|G:M|_p}$, and $\chi (x) = 0$ for $x \in G \setminus M$.  This implies that $\Center (G) = \kernel (\chi)$. We have already seen that $\chi(g)=-|Z|\chi(1)/|g^G|$, and the proof is concluded.
\end{proof}

The following theorem includes Theorem \ref{th:class}.

\begin{thm}\label{detailedThm6}
Let $G$ be a group. Assume there exist an element $g \in G\setminus \Center (G)$ and a nonlinear character $\chi \in \irr G$ such that $\chi(g)\neq 0$ and, for every nonlinear irreducible character $\psi \neq \chi$ of $G$, we have $\psi (g) = 0$. Then, setting $K = \kernel(\chi)$, the following conclusions hold.
\begin{enumerate}[{\rm (a)}]
\item $g \in G'$ and $\chi (g) = -|G:G'|/\chi (1)$.
\item $|\Centralizer_G (g)| = |G:G'|(|G:G'| + \chi(1)^2)/\chi(1)^2$.
\item $G' = K \cup g^G$ and $\chi$ vanishes off $G'$.
\item $G'/K$ is the unique minimal normal subgroup of $G/K$.
\item $\chi$ is the unique nonlinear irreducible character of $G/K$. 
\item $G/K$ is either an extra-special $2$-group of order $2^{1+2m}$ or a Frobenius group whose kernel is an elementary abelian $p$-group of order $p^m$ and a Frobenius complement is cyclic of order $p^m-1$, for a suitable prime $p$ and a suitable integer $m\ge 1$.  Moreover, $|\Centralizer_G(g)|=2^{1+2m}$ or $p^m$, respectively, and $g$ is an element of prime-power order.
\item $G$ is solvable.
\end{enumerate}
\end{thm}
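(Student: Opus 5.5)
Set $K=\kernel(\chi)$; the plan is to reduce to Theorem~\ref{mutual 1}/Theorem~\ref{mutual 2} with $N=K$ and $M=G'$.

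\textbf{Step 1: the value $\chi(g)$ and $g\in G'$.} Apply the Second Orthogonality Relation to the columns of $g$ and $1$. Only linear characters and $\chi$ are nonzero at $g$, so
\[0=\sum_{\lambda\in\Lin(G)}\lambda(g)+\chi(1)\chi(g).\]
If $g\notin G'$, the first sum is zero, which contradicts $\chi(g)\ne 0$. Hence $g\in G'$, and then $\chi(g)=-|G:G'|/\chi(1)$, giving (a).

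\textbf{Step 2: centralizer order and the class structure.} The same relation applied to the column of $g$ with itself gives $|\Centralizer_G(g)|=|G:G'|+|\chi(g)|^2$, which is (b).

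For (c), compare columns of $g$ and an arbitrary $x$. Take $x\in G'\setminus g^G$ and compute $\sum_\psi\psi(g)\overline{\psi(x)}=|G:G'|+\chi(g)\overline{\chi(x)}$. This must vanish, so $\chi(x)=\chi(1)$, i.e.\ $x\in K$. For $x\notin G'$, the linear part $\sum_\lambda\overline{\lambda(x)}$ is $0$, so $\chi(g)\overline{\chi(x)}=0$ and $\chi$ vanishes off $G'$. This also forces $K\le G'$ and $G'=K\cup g^G$; note $g\notin K$ since $\chi(g)\ne\chi(1)$.

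\textbf{Step 3: passing to $G/K$.} Now $g^G=G'\setminus K$ with $K<G'$ normal. By Lemma~\ref{lem:M le G}, $G'/K$ is a chief factor and the unique minimal normal subgroup of $G/K$ contained in $G'/K$. Any minimal normal subgroup of $G/K$ meets $G'/K$ nontrivially, because $\chi$ is faithful on $G/K$ and $G/K$ is nonabelian. Hence $G'/K$ is the unique minimal normal subgroup, giving (d).

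For (e), any other nonlinear $\psi\in\irr{G/K}$ is nonzero at $1$ and vanishes at $g$. Yet $\psi$ is constant on $G'/K\setminus\{K\}$, and by the argument in Theorem~\ref{mutual 1}(3)$\Rightarrow$(1) it would have $G'/K$ inside its kernel, hence be linear. So $\chi$ is the unique nonlinear irreducible character of $G/K$.

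Thus $\irr{G/K\mid G'/K}=\{\chi\}$ and Hypothesis~\ref{Hypothesis 1} holds with $N=K$, $M=G'$. Theorem~\ref{mutual 2} then gives that $g$ has $p$-power order and that $|\Centralizer_G(g)|=|G/K|_p$.

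\textbf{Step 4: the main obstacle, part (f).} Here I will invoke the classical classification of groups with exactly one nonlinear irreducible character (Seitz's theorem). Such a group is either extra-special of order $2^{1+2m}$, or a Frobenius group with elementary abelian kernel of order $p^m$ and cyclic complement of order $p^m-1$. I will use it as a known result rather than reprove it.

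The centralizer orders then follow from Theorem~\ref{mutual 2}(f):
\begin{itemize}
\item in the extra-special case, $|G/K|_2=2^{1+2m}$;
\item in the Frobenius case, $|G/K|_p=p^m$, since $p\nmid p^m-1$.
\end{itemize}

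Finally, for (g): $G/K$ is solvable in either case. Also $K<G'=M$, and Theorem~\ref{mutual 2}(d) gives that $M$ is solvable. Hence $K$ is solvable, and so $G$ is solvable.

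The point needing most care is (d) and (e). There one must ensure that no nonlinear character of $G/K$ other than $\chi$ can vanish on $G'\setminus K$ without being linear; this is handled by the transitivity on $\irr{G'/K}\setminus\{1\}$ argument above.
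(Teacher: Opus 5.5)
Your route is essentially the paper's: column orthogonality for (a) and (c), verifying Hypothesis~\ref{Hypothesis 1} with $N=K$ and $M=G'$, then Seitz's theorem for (f). The variations are harmless and slightly cleaner:
\begin{itemize}
\item You derive (b) and the vanishing of $\chi$ off $G'$ directly from orthogonality, instead of through Theorem~\ref{mutual 2}(f) and the Gagola property of $\chi$ on $G/K$.
\item For (g) you cite Theorem~\ref{mutual 2}(d), i.e.\ solvability of $M=G'$. The paper instead reruns the Remark~4.1 of \cite{DPSS} plus Theorem~D of \cite{IK} argument on the last term of the derived series of $K$. Both rest on the same two ingredients.
\end{itemize}

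The one step that does not hold as justified is in (d). You claim every minimal normal subgroup of $G/K$ meets $G'/K$ nontrivially ``because $\chi$ is faithful on $G/K$ and $G/K$ is nonabelian.'' Those two facts alone are not enough. In $D_8\times C_3$, the degree-$2$ character of $D_8$ times a faithful linear character of $C_3$ is faithful. Yet the central $C_3$ is minimal normal and meets the derived subgroup trivially.

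What closes the step is your own part (c). Suppose $L/K$ is minimal normal with $L\cap G'=K$. Then $[L,G]\le L\cap G'=K$, so $L/K\le \Center(G/K)$. Since $\chi$ is irreducible on $G/K$, this gives $|\chi(x)|=\chi(1)\neq 0$ for all $x\in L$. But every $x\in L\setminus K$ lies outside $G'$, where $\chi$ vanishes by (c), which is a contradiction. Hence $L\cap G'>K$, so $L\le G'$ by minimality, and Lemma~\ref{lem:M le G}(d) forces $L=G'$.

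Alternatively, as the paper does, once (e) and Hypothesis~\ref{Hypothesis 1} are established you can use Theorem~\ref{mutual 1}(4). It makes $\chi$ a Gagola character of $G/K$, and Theorem~\ref{gagola}(c) then gives the uniqueness of the minimal normal subgroup.
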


\begin{proof}
Let us denote by $\Lin(G)$ the set of linear characters of $G$, and by $X$ the set of nonlinear irreducible characters of $G$ different from $\chi$. Since $g$ is nontrivial and $\phi(g) = 0$ for all $\phi\in X$, it follows from the Second Orthogonality Relation 
that  
\[0 = \sum_{ \phi \in \irr G} \phi(g) \phi(1) = \sum_{ \phi \in \Lin(G)} \phi(g) \phi(1) + \chi(g) \chi(1). \]
If $g \not\in G'$, then  
$$\sum_{ \phi \in \Lin(G)} \phi(g) \phi(1) = 0$$ 
by the Second Orthogonality Relation, since $G' \neq gG' \in G/G'$. But this would imply $\chi (g) = 0$, a contradiction. Hence $g \in G'$, so $ \sum_{ \phi \in \Lin(G)} \phi(g) \phi(1) =|G/G'|$ and the equality above yields $\chi(g)=-|G/G'|/\chi(1)$, proving (a). Note that, if $x$ lies in $K$, then $x$ is not $G$-conjugate to $g$; hence, the Second Orthogonality Relation applied to $x$ and $g$ yields \[0 = \sum_{ \phi \in \Lin(G)} \phi(x) \phi(1) + \chi(1) \chi(g).\] Now, if $x\not\in G'$, then we get the contradiction $0=0+\chi(1)\chi(g)$, thus we conclude that $K\subseteq G'$. Also, if $y\in G'$ is not $G$-conjugate to $g$ then, again by the Second Orthogonality Relation applied to $y$ and $g$, we see that $0=|G/G'|+\chi(y)\chi(g)=|G/G'|-(|G/G'|\chi(y)/\chi(1))$; it easily follows that $\chi(y)=\chi(1)$, hence $y\in K$. We conclude that $g^G=G'\setminus K$, that is, $G'=K\cup g^G$. Now, Condition (3) of Theorem \ref{mutual 1} is satisfied (note that $|G/K|>2$, as $\chi$ is nonlinear), hence Condition (1) of that theorem is satisfied as well. So, Hypothesis~\ref{Hypothesis 1} holds, and (b) follows from (f) of Theorem \ref{mutual 2}. The second part of (c) and (d) are consequences of the fact that, by Theorem \ref{mutual 1} (4), $\chi$ is a Gagola character of $G/K$. As for (e), this is implied by Theorem \ref{mutual 1} (1) taking into account that the subgroup $M/N$ in our case is $(G/K)'$. 
Since $G/K$ has a unique nonlinear irreducible character, we may apply a theorem due to G. Seitz (\cite{Seitz}) to see that $G/K$ is either an extra-special $2$-group of order $2^{1+2m}$, or a Frobenius group whose kernel is an elementary abelian $p$-group of order $p^m> 2$ and a Frobenius complement is cyclic of order $p^m-1$, for a suitable prime $p$ and a suitable integer $m\ge 1$. (We will see that we could also appeal to groups with a Gagola character here where the $G'$ is the unique minimal normal subgroup.) 
	
If $G/K$ is an extra-special $2$-group, then $|G:G'|=2^{2m}$ and $\chi(1)=2^m$, whereas if $G/K$ is a Frobenius group, then $|G:G'| = \chi(1) = p^m - 1$. By (b), we have that $|\Centralizer_G(g)| = 2^{1+2m}$ or $p^m$. In particular, $g$ is a $2$-element or a $p$-element. Thus $g$ is an $r$-element for some prime $r\in \{ 2,p \}$, and the proof of (f) is complete.
	
Finally, we claim that $K$ is solvable. Suppose not, and let $L$ be the last term of the derived series of $K$. Then $L=L'\subseteq  G'$.  Let $\varphi$ be an irreducible character of $G$ whose kernel does not contain $L'$. Then $\varphi\neq \chi$ and is nonlinear, so $\varphi (g) = 0$. By Remark~4.1 in \cite{DPSS}, $r$ divides $\varphi (1)$.  By Theorem D in \cite{IK}, $L$ is solvable, which is a contradiction. Therefore, $K$ is solvable and since $G/K$ is solvable, we have that $G$ is solvable as well, proving (g).
\end{proof}

In Theorem \ref{th:class}, note that if ${\rm Ker} (\chi) = 1$, then $G$ is a group where $G'\setminus\{1\}$ consists of a single conjugacy class of $G$. This implies that $G'$ is a minimal normal subgroup of $G$, and it is not difficult to see that the only possibilities are that $G$ is an extra-special $2$-group or $G$ is a Frobenius group where the Frobenius kernel is elementary abelian of order $p^m$ for a prime $p$ and positive integer $m$ so that a Frobenius complement is cyclic of order $p^m - 1$.  In the next several results, we want to study groups that satisfy the hypotheses of Theorem \ref{th:class} (or Theorem \ref{detailedThm6}).  For this reason, we set down these hypotheses:

\medskip\noindent
\begin{hyp}\label{Hypothesis 2}  Let $G$ be a group.  Assume there exists an element $g \in G \setminus \Center (G)$ and a nonlinear character $\chi \in \irr G$ so that $\chi(g)\neq 0$ and, for every nonlinear irreducible character $\psi \ne \chi$ of $G$, we have $\psi (g) = 0$.  Set $K = \kernel (\chi)$.
\end{hyp}
\medskip

We first consider groups satisfying Hypothesis \ref{Hypothesis 2} with $K = 1$.  

\begin{lem} \label{degenerate case}
If $G$ satisfies Hypothesis $\ref{Hypothesis 2}$ with $K = 1$, then either $G$ is an extra-special $2$-group of order $2^{1+2m}$ or a Frobenius group whose Frobenius kernel is an elementary abelian $p$-group of order $p^m$ and a Frobenius complement is cyclic of order $p^m - 1$, where $p$ is a prime and $m\geq 1$ is an integer.
\end{lem}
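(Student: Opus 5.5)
This is essentially the case $K=1$ of Theorem \ref{detailedThm6}, so the plan is to read it off from that theorem. I will also sketch a self-contained route through Gagola's theorem, so that the paper does not rest only on Seitz's classification.

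First, Theorem \ref{detailedThm6}(c),(d) with $K=1$ gives $G'=\{1\}\cup g^G$. Thus $G'\setminus\{1\}$ is a single conjugacy class and $G'$ is the unique minimal normal subgroup of $G$. Part (e) says $\chi$ is the unique nonlinear irreducible character of $G$, and Seitz's theorem, as already invoked in part (f), gives the two listed possibilities. So the shortest proof is simply to cite Theorem \ref{detailedThm6}(f) with $K=1$.

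For the independent argument, set $M=G'$. Lemma \ref{min standard conj}(a) makes $M$ an elementary abelian $p$-group of order $p^m$ on whose nonidentity elements $G$ acts transitively. Since $\chi$ is faithful and vanishes off $G'$, Lemma \ref{gagola pairs} and Theorem \ref{gagola 2} show that $\chi$ is a Gagola character and that $(G,G')$ is a Camina pair. Now I split on whether $G'\le\Center(G)$.

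\emph{Case $G'\le \Center(G)$.} Transitivity of $G$ on $G'\setminus\{1\}$ forces $|G'|=2$. A Camina pair with $G'$ central of order $2$ gives $\Center(G)=G'$: any central $x\notin G'$ would have $x^G=\{x\}\supseteq xG'$, which is impossible. It also gives that every $x\notin G'$ has $x^G=xG'$, so the commutator map induces a nondegenerate alternating form on $G/G'$. Because $\Center(G)=G'=\{1,z\}$, the Frattini subgroup, being $G'G^2$ inside the center, equals $G'$. Hence $G$ is an extra-special $2$-group of order $2^{1+2m}$, and $\chi(1)^2=|G:G'|$ forces $|G:G'|$ to be an even power of $2$.

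\emph{Case $G'\not\le \Center(G)$.} Since $G'$ is the unique minimal normal subgroup, $\Center(G)=1$. The Camina pair condition gives $|\Centralizer_G(x)|=|\Centralizer_{G/G'}(xG')|=|G:G'|$ for $x\notin G'$. Using Corollary \ref{gagola 1}(b), $|\Centralizer_G(g)|=|G|_p$ for $g\in G'$. Together these should show $|G:G'|$ is coprime to $p$: if $p$ divided $|G:G'|$, a $p$-element outside $G'$ would have a centralizer meeting $G'$ nontrivially, contradicting the Camina property once $\Centralizer_{G'}(x)=1$ is established. Then Schur--Zassenhaus gives a complement $H$ acting fixed-point-freely, and transitivity on $G'\setminus\{1\}$ with $H$ abelian gives $|H|=p^m-1$, with $H$ cyclic. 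So $G$ is a Frobenius group of the stated form.

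The main obstacle is this last case: showing that $\Centralizer_{G'}(x)=1$ for every $x\notin G'$, i.e.\ the Frobenius action. I would try it first via the Camina pair property: for $x\notin G'$, the map $y\mapsto[x,y]$ hits every element of $G'$ by Lemma \ref{Camina pair-equiv}(4). Combined with the class-size count $|x^G|=|G'|$, this gives $x^{G'}=xG'$ and hence $|\Centralizer_{G'}(x)|=1$. In practice, though, I would simply cite Theorem \ref{detailedThm6}(f).
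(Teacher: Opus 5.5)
Your primary proof, reading the statement off Theorem~\ref{detailedThm6}(f) with $K=1$ (so that $G/K=G$), is exactly the paper's proof and is complete. Your self-contained alternative is fine in the case $G'\le\Center(G)$. In the case $G'\not\le\Center(G)$, however, it has a genuine gap at exactly the step you flagged.

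Your derivation of $\Centralizer_{G'}(x)=1$ for $x\notin G'$ does not follow from what you cite.
\begin{itemize}
\item Lemma~\ref{Camina pair-equiv}(4) produces some $y\in G$ with $[x,y]=z$. It does not produce $y\in G'$, so it says nothing about $x^{G'}$.
\item The count $|x^G|=|G'|$ only says $|\Centralizer_G(x)|=|G:G'|$. That is silent about $\Centralizer_G(x)\cap G'$.
\end{itemize}
Both facts also hold in your first case, where $\Centralizer_{G'}(x)=G'$, so they cannot force $\Centralizer_{G'}(x)=1$. For the same reason, a $p$-element outside $G'$ with nontrivial fixed points on $G'$ does not contradict the Camina property, so your coprimality argument also collapses. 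The missing ingredient is $\Center(G)=1$, which you derived but never used.

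With it the case closes as follows.
\begin{itemize}
\item Put $V=G'$ and let $P$ be a Sylow $p$-subgroup of $G$. It is normal, because $P/V$ is the Sylow $p$-subgroup of the abelian group $G/V$.
\item $\Center(P)\cap V$ is a nontrivial normal subgroup of $G$ inside the minimal normal subgroup $V$, so $V\le\Center(P)$.
\item Let $H$ be a complement to $P$ (Schur--Zassenhaus). Then $\Centralizer_V(H)$ is centralized by $PH=G$, so $\Centralizer_V(H)=1$.
\item Since $[P,H]\le G'=V$, coprime action gives $P=\Centralizer_P(H)V$ with $\Centralizer_P(H)\cap V=\Centralizer_V(H)=1$. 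Hence $\Centralizer_P(H)\cong P/V$ is abelian.
\item So $\Centralizer_P(H)$ centralizes itself, $V$ and $H$, and therefore $\Centralizer_P(H)\le\Center(G)=1$. Thus $P=V$.
\item Finally, $H\cong G/V$ is abelian and acts transitively on $V\setminus\{1\}$. It acts faithfully on $V$, since $\Centralizer_H(V)\le\Center(G)$. An abelian group acting faithfully and transitively acts regularly.
\end{itemize}
So $G$ is a Frobenius group with kernel $V$ and cyclic complement of order $p^m-1$.
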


\begin{proof}
This is Theorem \ref{detailedThm6} (f).
\end{proof}

Thus, we now consider groups that satisfy Hypothesis \ref{Hypothesis 2} with $K > 1$.  For these groups, we show that $\Center (G) \le K$.

\begin{lem}
If $G$ satisfies Hypothesis $\ref{Hypothesis 2}$ with $K > 1$, then $\Center (G) \le K$.
\end{lem}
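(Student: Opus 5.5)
The plan is to combine the description of $G'$ from Theorem~\ref{detailedThm6} with the fact that a central element forms a conjugacy class on its own. By Theorem~\ref{detailedThm6}(c), we have $G' = K \cup g^G$, so $g^G = G' \setminus K$. We also know that $\chi$ vanishes on $G \setminus G'$. I would argue by contradiction: assume $z \in \Center(G)$ but $z \notin K$, and derive that $g$ is central.

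\emph{Step 1: $z$ lies in $G'$.} Since $z$ is central, Schur's lemma gives $\chi(z) = \chi(1)\lambda(z)$ for some linear character $\lambda$ of $\Center(G)$. In particular $\chi(z) \neq 0$. Because $\chi$ vanishes off $G'$ by Theorem~\ref{detailedThm6}(c), this forces $z \in G'$. This is also where $z \notin K$ is consistent: $z\notin K$ means only that $\lambda(z) \neq 1$, and the nonvanishing of $\chi(z)$ holds regardless.

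\emph{Step 2: $g$ is conjugate to $z$.} Now $z \in G' \setminus K = g^G$, so $z$ is a $G$-conjugate of $g$. But $z$ is central, so $z^G = \{z\}$, and hence $g = z \in \Center(G)$. This contradicts the assumption in Hypothesis~\ref{Hypothesis 2} that $g \notin \Center(G)$. Therefore $\Center(G) \le K$.

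I do not expect a real obstacle; the only point to verify is that Step~1 uses nothing beyond Theorem~\ref{detailedThm6}(c). I also note that the hypothesis $K > 1$ does not seem to enter this argument. It appears only to separate this case from the degenerate case $K=1$ treated in Lemma~\ref{degenerate case}, where the same reasoning gives $\Center(G)=1$.
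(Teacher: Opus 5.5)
Your proof is correct, and it is more economical than the paper's. The paper invokes Theorem~\ref{detailedThm6}(f), the Seitz-based description of $G/K$ as either an extra-special $2$-group or a Frobenius group, and splits into cases. In the Frobenius case, $\Center(G/K)=1$ gives $\Center(G)\le K$ at once. In the extra-special case, $\Center(G/K)=G'/K$ has order $2$, so a central element outside $K$ would lie in $G'\setminus K=g^G$, contradicting (c).

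You use only (c). Schur's lemma makes $\chi$ nonzero on every central element, and $\chi$ vanishes off $G'$, so $\Center(G)\le G'$. Since $g$ is noncentral, $\Center(G)\cap g^G=\emptyset$, which forces $\Center(G)\le K$. Your argument is thus the paper's extra-special argument made uniform by the extra observation $\Center(G)\le G'$. It avoids both the classification and the case split, and, as you note, it never uses $K>1$.

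It even gives slightly more. When $K=1$ it yields $\Center(G)=1$, so the extra-special alternative in Lemma~\ref{degenerate case} cannot actually occur. Equivalently, under Hypothesis~\ref{Hypothesis 2} an extra-special quotient $G/K$ automatically forces $K>1$. The paper's route gives no additional information for this lemma; its only advantage is that it matches the case analysis based on (f) that the paper pursues afterwards.
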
    

\begin{proof}
By Theorem \ref{detailedThm6} (f), we know that $G/K$ is either an extra-special $2$-group or a Frobenius group.  Let $Z/K = \Center (G/K)$, and observe that $\Center (G) \le Z$.  If $G/K$ is a Frobenius group, then $Z/K = 1$, so $Z = K$ and $\Center (G) \le K$.  If $G/K$ is an extra-special $2$-group, then $Z/K = (G/K)' = G'/K$ and $|Z:K| = 2$.  If $\Center (G)$ is not contained in $K$, then  $Z \setminus K$ contains elements in $\Center (G)$ and this contradicts Theorem \ref{detailedThm6} (c) which states that all of the elements in $G' \setminus K$ are conjugate.   Hence, we conclude that $\Center (G) \le K$.
\end{proof}

For groups satisfying Hypothesis \ref{Hypothesis 2}, we can use the results proved in Theorem \ref{mutual 2} to obtain more information on $G'$. 

\begin{thm}\label{Frob-Wiel}
If $G$ satisfies Hypothesis $\ref{Hypothesis 2}$, then either $G'$ is a $p$-group for a prime $p$ or $G'$ is a Frobenius-Wielandt group with $K$ its Frobenius-Wielandt kernel.
\end{thm}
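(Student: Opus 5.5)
The plan is to reduce the statement to Theorem~\ref{standard res}(b), or equivalently Theorem~\ref{mutual 2}(c), applied with $M = G'$ and $N = K$.

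First, assume Hypothesis~\ref{Hypothesis 2}. By Theorem~\ref{detailedThm6}(c) we have $G' = K \cup g^G$ with $g \notin K$. Since $K \le G'$ is normal in $G$ and $g^G \cap K = \emptyset$, this gives $g^G = G' \setminus K$, and $K < G'$ properly because $g \in G'$ by Theorem~\ref{detailedThm6}(a). So the triple $N = K$, $M = G'$, $g$ satisfies the hypotheses of Theorem~\ref{standard res}: we have normal subgroups $K < G'$ of $G$ and an element $g \in G' \setminus K$ conjugate to every element of $G' \setminus K$.

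Next, I invoke Theorem~\ref{standard res}(b). It yields a prime $p$ such that either $G'$ is a $p$-group, or $G'$ is a Frobenius-Wielandt group with Frobenius-Wielandt kernel $K$. This is exactly the assertion.

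For a more structured route, the proof of Theorem~\ref{detailedThm6} also shows that Hypothesis~\ref{Hypothesis 1} holds with $N = K$ and $M = G'$: condition (3) of Theorem~\ref{mutual 1} holds and $|G/K| > 2$ because $\chi$ is nonlinear. Then Theorem~\ref{mutual 2}(c) gives the same conclusion directly.

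The only point needing care is identifying $M$ with $G'$. In the proof of Theorem~\ref{detailedThm6}(e), $M/K$ is the unique minimal normal subgroup $(G/K)' = G'/K$ of $G/K$, and this is consistent with Theorem~\ref{detailedThm6}(d). I do not expect any real obstacle; the substance sits in Theorem~1.1 of \cite{primepower}, which was already used to prove Theorem~\ref{standard res}(b).
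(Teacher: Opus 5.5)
Your proposal is correct and matches the paper. The paper's proof is just the citation of Theorem~\ref{mutual 2}(c) with $N=K$ and $M=G'$, and that result itself rests on Theorem~\ref{standard res}(b). Your primary route, applying Theorem~\ref{standard res}(b) directly once $g^G = G'\setminus K$ is obtained from Theorem~\ref{detailedThm6}(c), is equally valid and slightly more economical, since it does not need the uniqueness condition $|\irr{G/K \mid G'/K}|=1$ from Hypothesis~\ref{Hypothesis 1}.
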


\begin{proof}
This is Theorem \ref{mutual 2} (c).  
\end{proof}

We now consider the consequences of Theorem \ref{Frob-Wiel} in light of the possibilities for $G/K$ from Theorem \ref{detailedThm6}.  We begin by considering the case where $G/K$ is an extra-special $2$-group in Theorem \ref{detailedThm6}.
In this case, we see that $|G':K| = 2$, so either $G'$ is a $2$-group (in which case the whole $G$ is a $2$-group) or $G'$ is a Frobenius-Wielandt group. We will resume the discussion about $2$-groups in Section~\ref{occurs 2}.  
Instead, we now consider the case where $G/K$ is an extra-special $2$-group and $G'$ is a Frobenius-Wielandt group \emph{with the additional assumption that $|G'/K|$ and $|K|$ are coprime}, that is, $K\neq 1$ has odd order.  We will see in the next theorem that $G'$ is a Frobenius group with Frobenius kernel $K$ and whose Frobenius complements have order~$2$.

\begin{thm} \label{2 quo coprime}
Let $G$ be a group:
\begin{enumerate}[{\rm (a)}]
\item If $G$ satisfies Hypothesis $\ref{Hypothesis 2}$ with $G/K$ an extra-special $2$-group and $K$ a nontrivial subgroup of odd order, then $G'$ is a Frobenius group with Frobenius kernel $K$ and Frobenius complements of order $2$. 
\item  Conversely, if $G'$ is a Frobenius group with Frobenius kernel $K$ and Frobenius complements of order~$2$, 
and $G/K$ is an extra-special $2$-group, then $G$ satisfies Hypothesis $\ref{Hypothesis 2}$.
\end{enumerate}
\end{thm}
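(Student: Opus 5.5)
For part (a), I will work under Hypothesis~\ref{Hypothesis 2} and begin by collecting what Theorem~\ref{detailedThm6} already gives. Since $G/K$ is extra-special, $G'/K = \Center(G/K)$ has order $2$, and Theorem~\ref{detailedThm6}(c) gives $G'\setminus K = g^G$. Hence $g$ is conjugate to every element of $gK = G'\setminus K$. Theorem~\ref{detailedThm6}(f) shows that $g$ is a $2$-element. Because $|K|$ is odd, a Sylow $2$-subgroup $T$ of $G'$ has order $2$ and satisfies $G' = K\rtimes T$. Replacing $g$ by a conjugate, I may take $T=\langle g\rangle$ with $g^2=1$.

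Next I count centralizers. The class of $g$ in $G'$ should already be all of $gK$. For this I use the Frattini-type argument: for $x\in G$, the element $g^x$ lies in $gK$, and $g^x$ is an involution of $G'$. The involutions of $G' = K\langle g\rangle$ lying in $gK$ are exactly the Sylow $2$-generators, and they are all $G'$-conjugate by Sylow's theorem (indeed $K$-conjugate). Every element of $gK$ is $G$-conjugate to $g$, hence is an involution, so all of $gK$ is a single $K$-class. This gives $|K:\Centralizer_K(g)| = |K|$, so $\Centralizer_K(g)=1$. Thus $g$ acts fixed-point-freely on $K$ by conjugation, and $G'=K\rtimes\langle g\rangle$ is a Frobenius group with kernel $K$ and complement of order $2$, as $K\ne 1$. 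The only delicate point is justifying that elements of $gK$ are involutions. This follows because they are conjugates of $g$, once $g$ has been chosen of order $2$ inside $T$ (that is possible since $g^G=G'\setminus K\supseteq T\setminus\{1\}$).

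For part (b), let $\chi$ be the unique nonlinear irreducible character of the extra-special group $G/K$, and take $g\in G'\setminus K$. Then $g\notin\Center(G)$, since $g$ inverts the nontrivial group $K$. In the Frobenius group $G'$ with complement of order $2$, we have $g^{G'} = G'\setminus K = gK$. Since $(G'/K)$ is central of order $2$ in $G/K$, we also have $g^G = G'\setminus K$, so Theorem~\ref{standard conj} applies with $M=G'$ and $N=K$. By $(1)\Rightarrow(2)$ there, every $\psi\in\irr{G\mid K}$ satisfies $\psi(g)=0$. The characters of $\irr{G/K}$ other than $\chi$ are linear. We have $\chi(g) = -\chi(1) \ne 0$, because $gK$ generates the centre of $G/K$ and $\chi$ is faithful on $G/K$. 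Every nonlinear $\psi\ne\chi$ lies in $\irr{G\mid K}$ and so vanishes at $g$. This is Hypothesis~\ref{Hypothesis 2} with $\ker\chi=K$.

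The main obstacle I expect is the step in (a) that passes from $G$-conjugacy of $gK$ to fixed-point-freeness. I plan to handle it by the Sylow argument in $G'$ described above.
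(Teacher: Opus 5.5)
Your argument is correct in both parts, but it follows a different line from the paper's.

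\textbf{Part (a).} The paper places $g$ in a Sylow $2$-subgroup $P$ of $G$. It then uses the centralizer order from Theorem~\ref{detailedThm6}(f), $|\Centralizer_G(g)|=2^{1+2m}=|G|_2$, to get $\Centralizer_G(g)=P$. Hence $g\in\Center(P)=P'$ is an involution, $G'=K\langle g\rangle$, and $\Centralizer_K(g)=K\cap P=1$.

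You use only the class equality $g^G=G'\setminus K$ from Theorem~\ref{detailedThm6}(c), together with Sylow conjugacy inside $G'=K\rtimes T$. This shows that $gK$ is a single $K$-class, and counting gives $\Centralizer_K(g)=1$. Your appeal to (f) for ``$g$ is a $2$-element'' is superfluous, since (c) alone lets you take $g\in T$.

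The Sylow step can also be shortened. Once every $gk\in gK$ is an involution, $g$ inverts $K$ elementwise, so $K$ is abelian and $\Centralizer_K(g)=1$ because $|K|$ is odd. The paper's version gives slightly more, namely that $\Centralizer_G(g)$ is a full Sylow $2$-subgroup. This too follows from (c), via $|g^G|=|K|$.

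\textbf{Part (b).} The paper restricts to $G'$. Since $K=G''$, every nonlinear $\psi\neq\chi$ has nonlinear irreducible constituents on $G'$, and these vanish on the complement element $g$ of the Frobenius group $G'$.

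You argue class-theoretically. In the Frobenius group $G'$ you have $g^{G'}=gK$, so $g^G=G'\setminus K$. Then Theorem~\ref{standard conj}, $(1)\Rightarrow(2)$, gives $\psi(g)=0$ for all $\psi\in\irr{G\mid K}$. This ties the converse back to the paper's main characterization instead of Clifford theory. You also verify explicitly that $\chi(g)=-\chi(1)\neq 0$ and $g\notin\Center(G)$; the paper leaves the first implicit and gets the second from $\Center(G)=1$.
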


\begin{proof}
Let $P$ be a Sylow $2$-subgroup of $G$ so that $G = KP$ and $K \cap P = 1$.  Without loss of generality, we may assume that $g \in P$.  By Theorem \ref{detailedThm6} (f), we know that $\Centralizer_G (g) = P$, so $g \in \Center (P)$.  Since we are assuming $P$ is an extra-special $2$-group, this implies that $g$ has order $2$, and $P' = \Center (P) = \langle g \rangle$.  Because $G/K \cong P$, we see that $G' = K \langle g \rangle$.  It follows that $G'$ is a Frobenius group with $K$ as Frobenius kernel and $\langle g \rangle$ as Frobenius complement.  	This gives the first conclusion.

Suppose now that $G'$ is a Frobenius group with Frobenius kernel $K$ and a Frobenius complement that is cyclic of order $2$, and that $G/K$ is an extra-special $2$-group.  We first note that $\Center (G) = 1$.  Now, take $g$ to be an element of $G'$ that lies outside of $K$, and observe that $g$ must have order $2$ and every nonlinear irreducible character of $G'$ will vanish on $g$.  Since $G/K$ is an extra-special $2$-group, we know that $G/K$ has a unique nonlinear irreducible character, which we will denote by $\chi$.  Note that $K = G''$; so every irreducible character of $G'$ that does not have $K$ in its kernel will be nonlinear.  In particular, if $\psi$ is a nonlinear irreducible character of $G$ other than $\chi$, we know that $K$ is not in the kernel of $\psi$; so $K$ is not in the kernel of the irreducible constituents of $\psi_{G'}$.  Hence, the irreducible constituents of $\psi_{G'}$ are not linear.  We have seen that the nonlinear irreducible characters of $G'$ vanish on $g$.  It follow that $\psi$ will vanish on $g$.  Therefore, we have an element $g \in G\setminus{\rm{Z}}(G)$ and a nonlinear character $\chi\in\irr G$ so that all of the other nonlinear irreducible characters of $G$ vanish on $g$.  Thus, $G$ satisfies Hypothesis \ref{Hypothesis 2}. 
\end{proof}

Note that the quaternion group of order $8$ is the only generalized quaternion group that is an extra-special $2$-group.  So the only Frobenius groups that satisfy the conclusion of Theorem \ref{2 quo coprime} are precisely those where a Frobenius complement is the quaternion group of order $8$.  But there are examples of groups satisfying the conclusion of Theorem \ref{2 quo coprime} where $G$ is {\it not} a Frobenius group: the semi-direct product of the dihedral group of order $8$ acting half-transitively on an elementary abelian $3$-group of order $9$ gives an example of this kind. (See \cite{half}.)   

To sum up, for a group $G$ satisfying the assumptions of Theorem~\ref{detailedThm6} in the situation where $G/K$ is an extra-special $2$-group and $G'$ is a Frobenius-Wielandt group, it remains to treat the case when $K$ has even order. We leave the discussion about the detailed structure of such a group as an open problem.

Next, we move to the situation where (still in the context of Theorem~\ref{detailedThm6}) $G/K$ is a Frobenius group whose Frobenius kernel $G'/K$ is an elementary abelian $p$-group for a prime $p$. By Theorem~\ref{Frob-Wiel}, we know that $G'$ is either a $p$-group or a Frobenius-Wielandt group. As for the former case, we leave it for future research; here we only note that, with the extra assumption $K={\rm Z}(G)$, such a group will satisfy the hypotheses of Theorem~\ref{detailedThm7}, and if we assume further $|G':K|=p^2$ and $|K|=p$, then we will be in the situation discussed at the end of Section 6 (see in particular Theorem~\ref{doubly-transtive p^2}).
Moving to the latter case, similarly to Theorem~\ref{2 quo coprime}, in the following result we consider the situation when $|G'/K|$ and $|K|$ are coprime (leaving the non-coprime case for future research).

\begin{thm}
Let $G$ be a group:	
\begin{enumerate}[{\rm (a)}]
\item If $G$ satisfies Hypothesis $\ref{Hypothesis 2}$ with $G/K$ a Frobenius group whose Frobenius kernel $G'/K$ is an elementary abelian $p$-group of order $p^m$ for some prime $p$ and positive integer $m \ge 1$, $G/G'$ is cyclic of order $p^m-1$, and $K$ is a $p'$-group, then $G$ is a $2$-Frobenius group, i.e. $G'$ is a Frobenius group with Frobenius kernel $K$ and in particular, $m = 1$.
\item Conversely, if $G$ is a $2$-Frobenius group where $G'$ is a Frobenius group with Frobenius kernel $K$ and $G/K$ is a Frobenius group with Frobenius kernel $G'/K$ so that $|G':K| = p$ and $G/G'$ is cyclic of order $p-1$, then $G$ satisfies Hypothesis $\ref{Hypothesis 2}$.
\end{enumerate} 
\end{thm}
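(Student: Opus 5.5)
Part (a) follows the proof of Theorem~\ref{2 quo coprime}. By Theorem~\ref{detailedThm6}(f), $g$ is a $p$-element and $|\Centralizer_G(g)| = p^m$. Since $G'/K$ is a $p$-group and $K$ is a $p'$-group, a Sylow $p$-subgroup $P$ of $G'$ is a complement to $K$ in $G'$, and $|P| = p^m$. Conjugating, we may assume $g \in P$. Because $P$ is abelian (it is isomorphic to $G'/K$), we get $P \le \Centralizer_G(g)$, and comparing orders gives $\Centralizer_G(g) = P$. In particular $\Centralizer_K(g) = 1$.

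By Theorem~\ref{detailedThm6}(c), every element of $P \setminus \{1\} \subseteq G' \setminus K$ is $G$-conjugate to $g$, so $\Centralizer_K(x) = 1$ for every $1 \ne x \in P$. Since $P$ acts fixed-point-freely on $K$, $G' = KP$ is a Frobenius group with kernel $K$ and complement $P$ (assuming $K \ne 1$; if $K = 1$ the statement is degenerate). Frobenius complements have cyclic or generalized quaternion Sylow subgroups, and $P$ is elementary abelian, so $P$ is cyclic of order $p$ and $m = 1$. Together with the fact that $G/K$ is Frobenius with kernel $G'/K$, this shows $G$ is $2$-Frobenius.

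For (b), take $g \in G' \setminus K$ of order $p$, for instance a generator of a complement of $G'$. Let $\chi$ be the unique nonlinear irreducible character of $G/K$; it exists because $G/K$ is Frobenius of order $p(p-1)$ with kernel of order $p$, and it has degree $p-1$. Since $\Center(G) \le \Center(G')$ and $G'$ is a Frobenius group, $\Center(G) = 1$, so $g \notin \Center(G)$. Now let $\psi \ne \chi$ be a nonlinear irreducible character of $G$.
\begin{itemize}
\item If $K \le \kernel(\psi)$, then $\psi$ is a nonlinear character of $G/K$, hence $\psi = \chi$, which is excluded.
\item If $K \not\le \kernel(\psi)$, then every irreducible constituent $\theta$ of $\psi_{G'}$ satisfies $K \not\le \kernel(\theta)$, because $\kernel(\psi_{G'})$ is the intersection of the $G$-conjugates of $\kernel(\theta)$ and $K$ is normal in $G$. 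Hence $\theta$ is induced from a nontrivial character of the Frobenius kernel $K$, so it vanishes off $K$. Thus $\psi(g) = 0$.
\end{itemize}
Finally, $\chi(g) = -1 \ne 0$, since $\chi$ vanishes off $G'/K$, is constant on the nonidentity elements there, and sums to zero over $G/K$. So Hypothesis~\ref{Hypothesis 2} holds.

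The main obstacle is the fixed-point-free step in (a): one has to get $\Centralizer_K(x) = 1$ for \emph{every} nontrivial $x \in P$, not only for $g$. This rests on the transitivity from Theorem~\ref{detailedThm6}(c) together with the exact order computation $|\Centralizer_G(g)| = p^m$, which forces $\Centralizer_G(g) = P$.
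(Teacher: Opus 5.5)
Your proof is correct. Part (b) is essentially the paper's argument. You express the vanishing through induction from the Frobenius kernel $K$ rather than through $K=G''$, and you also verify $\chi(g)=-1\neq 0$, which the paper leaves implicit.

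Part (a) takes a genuinely different route. The paper asserts $g^G=g^K$ and deduces from $G'=K\cup g^G$ that $G'=K\langle g\rangle$. From this it gets that $G'/K$ is cyclic and $m=1$, and only afterwards uses ${\rm C}_G(g)=\langle g\rangle$ to see that $G'$ is Frobenius. You reverse the order:
\begin{enumerate}
\item from $|{\rm C}_G(g)|=p^m$ you identify ${\rm C}_G(g)$ with a Sylow $p$-subgroup $P$ of $G'$;
\item you use Theorem~\ref{detailedThm6}(c) to transport ${\rm C}_K(g)=1$ to every $1\neq x\in P$;
\item you obtain the Frobenius structure of $G'=KP$ first;
\item you get $m=1$ because an abelian Frobenius complement is cyclic.
\end{enumerate}
What your route buys is a complete justification that $G'/K$ is cyclic. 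The equality $g^G=g^K$ cannot hold as written, since $|g^G|=|G'|-|K|=|K|(p^m-1)>|K|\ge|g^K|$ whenever $p^m>2$. For example, in $G=S_4$ with $K=V_4$ and $g$ a $3$-cycle, $|g^G|=8$ while $|g^K|=4$. Some input of your kind is really needed at that point, namely that a noncyclic elementary abelian group cannot act fixed-point-freely on a nontrivial group.

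Two small remarks. First, in (b) the inclusion $\Center(G)\le\Center(G')$ requires $\Center(G)\le G'$. This holds because $\Center(G/K)=1$ gives $\Center(G)\le K\le G'$; alternatively, just note that $g$ acts fixed-point-freely on $K\neq 1$, so $g\notin\Center(G)$. Second, in (a) the case $K=1$ is not merely degenerate but a genuine exception. $A_4$ satisfies all the hypotheses with $K=1$, $p=2$, $m=2$, yet $m\neq 1$. So $K\neq 1$ has to be read into the statement, as it is explicitly in Theorem~\ref{2 quo coprime}(a), and the paper's proof also uses this tacitly.
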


\begin{proof}
Let $g$ be an element so that $G' \setminus K = g^G$.  Applying Theorem \ref{detailedThm6} (c), we have $G' = K \cup g^G$. Since $g^G=g^K$, we see that $G'$ is a semi-direct product of $K$ with $\langle g\rangle$; it follows that the elementary abelian $p$-group $G/K$ is cyclic, so it has order $p$ 
(that is, $m = 1$).  By Theorem \ref{detailedThm6} (f), we know that $\Centralizer_G (g) = \langle g \rangle$.  It follows that $G' = K \langle g \rangle$ is a Frobenius group with Frobenius kernel $K$ and Frobenius complement $\langle g \rangle$.  Since $G/K$ is a Frobenius group with Frobenius kernel $G'/K$, we conclude that $G$ is a $2$-Frobenius group.  

Conversely, suppose $G$ is a $2$-Frobenius group where $G'$ is a Frobenius group with Frobenius kernel $K$ and $G/K$ is a Frobenius group with Frobenius kernel $G'/K$ so that $|G':K| = p$ and $G/G'$ is cyclic of order $p-1$ for some prime $p$.  We first note that $\Center (G) = 1$  Now, take $g$ to be an element of $G'$ that lies outside of $K$, and observe that $g$ must have order $p$ and every nonlinear irreducible character of $G'$ will vanish on $g$.  Since $G/K$ is a Frobenius group whose Frobenius kernel has order $p$ and a Frobenius complement has order $p - 1$, we know that $G/K$ has a unique nonlinear irreducible character which we will call $\chi$.  Note that $K = G''$; so every irreducible character of $G'$ that does not have $K$ in its kernel will be nonlinear.  In particular, if $\psi$ is a nonlinear irreducible character of $G$ other than $\chi$, we know that $K$ is not in the kernel of $\psi$; so $K$ is not in the kernel of the irreducible constituents of $\psi_{G'}$.  Hence, the irreducible constituents of $\psi_{G'}$ are not linear.  We have seen that the nonlinear irreducible characters of $G'$ vanish on $g$.  It follow that $\psi$ will vanish on $g$.  Therefore, we have an element $g \in G$ and a character $\chi$ so that all of the other nonlinear irreducible characters of $G$ vanish on $g$.  Thus, $G$ satisfies Hypothesis~\ref{Hypothesis 2}.	
\end{proof}



\section{Central extensions of Gagola groups: $p$-groups}\label{occurs 2}

Our aim for the rest of this paper is to describe the groups $H$ that can appear as factor groups $G/{\rm{Z}}(G)$, where $G$ is a group satisfying Condition (1)  of Theorem~\ref{detailedThm7}: $G$ has a nonlinear irreducible character $\chi$ and a noncentral element $g$ such that $\chi$ vanishes on all the noncentral conjugacy classes of $G$ except $g^G$. As shown in that theorem, this is equivalent to the fact that $G/{\rm Z}(G)$ is a Gagola group with Gagola character $\chi$ and every character in $\irr{G\mid{\rm Z}(G)}$ vanishes on $g$ (where the conjugacy class of $g{\rm Z}(G)$ in $G/{\rm Z}(G)$ is the set of nontrivial elements of the unique minimal normal subgroup of $G/{\rm Z}(G)$). Clearly, the groups $H$ as above in the case when ${\rm Z}(G)=1$ are precisely the Gagola groups, so we will assume that ${\rm Z}(G)$ is nontrivial.

In view of the paragraph above, the groups $H$ of our interest are Gagola groups, and we start by considering the case when ${\rm Z}(H)$ is nontrivial. By the definition of a Gagola group, it is immediate to see that ${\rm Z}(H)\neq 1$ if and only if $|{\rm Z}(H)|=2$ (and ${\rm Z}(H)$ is the unique minimal normal subgroup of $H$); in this section we will focus on this case \emph{assuming that $H$ is a $p$-group}, which of course implies that $H$ is a $2$-group. In view of Lemma \ref{gagola pairs}, we also know that $|\irr {G \mid \Center (G)}| = 1$ and $(G,\Center (G))$ is a Camina pair. We note that there has been almost no research studying Camina pairs $(H,\Center (H))$ where $H$ is a $2$-group and $|\Center (H)| = 2$, but Theorem 7.1 of \cite{mac} shows that $|H:\Center (H)|$ is a square in this case. The smallest order for such a group $H$ is obviously $8$ (both the dihedral group and the quaternion group of order $8$ are Gagola groups), and ``small" groups of this kind can be easily determined using {\texttt{GAP}} \cite{GAP} or {\texttt{MAGMA}} \cite{MAGMA} (for instance, there are $7$ Gagola groups of order $32$ up to isomorphism, $75$ Gagola groups of order $128$ and $3095$ Gagola groups of order $512$). As another remark, of course in this situation the group $H$ will have nilpotency class $2$ if and only if it is an extra-special $2$-group. 

To the end of describing central extensions of Gagola groups, the next general lemma will be useful.

\begin{lem} \label{lem: center}
Let $G$ be a group having an irreducible character that vanishes on all but one noncentral conjugacy class of $G$.  If $G = H \times K$, then either $H$ or $K$ is trivial.
\end{lem}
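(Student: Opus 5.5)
The plan is to use the factorization of irreducible characters of a direct product and play the noncentral class $g^G$ against elements of the form $(h_0,k)$. I read the hypothesis as: there is an irreducible character $\chi$ and a noncentral element $g$ with $\chi(g)\neq 0$, such that $\chi$ vanishes on every noncentral element outside $g^G$.

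Suppose $G=H\times K$ with both factors nontrivial. By Theorem 4.21 of \cite{Isaacs}, $\chi=\alpha\times\beta$ with $\alpha\in\irr H$ and $\beta\in\irr K$, so $\chi(h,k)=\alpha(h)\beta(k)$. Recall the two facts about direct products that drive the argument:
\begin{itemize}
\item $\Center(G)=\Center(H)\times\Center(K)$;
\item $(h,k)$ and $(h',k')$ are $G$-conjugate exactly when $h\sim_H h'$ and $k\sim_K k'$.
\end{itemize}
Write $g=(h_0,k_0)$. Since $g\notin\Center(G)$, after possibly swapping the roles of $H$ and $K$ we may assume $h_0\notin\Center(H)$. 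From $\chi(g)\neq0$ we get $\alpha(h_0)\neq0$ and $\beta(k_0)\neq0$.

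\textbf{Step 1: $k_0=1$.} Consider $(h_0,1)$. It is noncentral because $h_0\notin\Center(H)$. Its character value is $\alpha(h_0)\beta(1)\neq 0$. By hypothesis $(h_0,1)$ must therefore lie in $g^G$. Comparing second coordinates, $k_0$ is $K$-conjugate to $1$, so $k_0=1$.

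\textbf{Step 2: $\beta$ vanishes on $K\setminus\{1\}$.} Take any $k\in K$ with $k\neq1$. The element $(h_0,k)$ is noncentral, again because $h_0\notin\Center(H)$. It is not conjugate to $g=(h_0,1)$, since $k\not\sim 1$. Hence $0=\chi(h_0,k)=\alpha(h_0)\beta(k)$, and since $\alpha(h_0)\neq0$ we get $\beta(k)=0$.

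\textbf{Step 3: $K=1$.} An irreducible $\beta$ vanishing off the identity satisfies $1=[\beta,\beta]=\beta(1)^2/|K|$. It also satisfies $[\beta,1_K]=\beta(1)/|K|=1/\beta(1)$, which must be a nonnegative integer. This forces $\beta(1)=1$ and hence $|K|=\beta(1)^2=1$. This contradicts $K$ being nontrivial, which proves the lemma.

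The only delicate point is the initial reduction. One must use noncentrality of $g$ to choose the factor, here $H$, in which the coordinate $h_0$ is noncentral. That choice is what guarantees that every element $(h_0,k)$ is noncentral, so the hypothesis applies to all of them. The rest is a routine computation.
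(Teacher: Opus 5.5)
Your proposal is correct and follows essentially the same route as the paper. Both arguments write $\chi=\alpha\times\beta$, use a factor in which $g$ has a noncentral coordinate to force the other coordinate of $g$ to be $1$, show that $\beta$ vanishes on $K\setminus\{1\}$, and conclude $K=1$. The only differences are cosmetic: you pick the noncentral factor directly from $\Center(G)=\Center(H)\times\Center(K)$ where the paper compares $(h,1)$ with $(1,k)$, and you give the explicit inner-product computation for the last step where the paper just notes that $\beta$ would be a multiple of the regular character.
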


\begin{proof}
Suppose that $H > 1$ and $K > 1$.  We know $\Center (G) = \Center (H) \times \Center (K)$.   Let $\chi$ be the irreducible character that vanishes on all of the conjugacy classes of $G$ except those in the center of $G$ and one noncentral conjugacy class. Suppose $H$ and $K$ are both nontrivial.  We can write $\chi = \theta \times \gamma$ where $\theta \in \irr H$ and $\gamma \in \irr K$.  Let $g$ be a representative of the noncentral conjugacy class that $\chi$ does not vanish on.  We can write $g = (h,k)$ where $h \in H$ and $k \in K$.  We know that $\chi (g) = \theta (h) \gamma(k)$.  Since $\chi (g)$ is not $0$, neither $\theta (h)$ nor $\gamma (k)$ can be $0$.   Now, $\chi ((h,1)) = \theta(h) \gamma (1) \ne 0$ and $\chi ((1,k) )= \theta (1) \gamma(k) \ne 0$.  Since $(h,1)$ and $(1,k)$ are in different conjugacy classes, they cannot both be noncentral, and if they are both central, then $g$ would be central.  Hence, without loss of generality $(h,1)$ is noncentral in $G$ and $(1,k)$ is central in $G$.  Further, $(h,1)$ and $g$ have to be conjugate, so $k = 1$.  Thus, if $k' \in K \setminus \{1\}$, then $\chi ((h,k')) = 0 = \theta (h) \gamma (k')$ and $\gamma (k') = 0$ since $\theta (h) \ne 0$.  It follows that $\gamma$ is a multiple of the regular character of $K$, but this only occurs when $K = 1$.  This proves the lemma.
\end{proof}

As an immediate consequence, we get the following.

\begin{prop}
Let $G$ be a group, and let $H=G/{\rm Z}(G)$. Assume that $G$ has an irreducible character that vanishes on all but one noncentral conjugacy class of $G$, and that $H$ is a $p$-group. Then $H$ is a Gagola $2$-group and $G$ is a $2$-group. 
\end{prop}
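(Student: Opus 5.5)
The plan is to combine Theorem~\ref{detailedThm7} with Lemma~\ref{lem: center}, using nilpotency of $G$ to split off the Sylow $p$-subgroup. Let $\chi$ be the given irreducible character of $G$ and let $g$ be a representative of the unique noncentral conjugacy class on which $\chi$ does not vanish. Since $g$ is noncentral, $H = G/\Center(G)$ is nontrivial.

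\textbf{Step 1: $\chi$ is nonlinear.} A linear character of $G$ never vanishes. Since $g^G$ is the only noncentral class on which $\chi$ is nonzero, $G$ would then have exactly one noncentral class. If that happens, I will check directly that the claims still hold (via the same Step~2 argument), or exclude it by noting that the paper's conventions take $\chi$ nonlinear. Granting this, Theorem~\ref{detailedThm7} applies. It gives that $H$ is a Gagola group with Gagola character $\chi$, and that $M \setminus \Center(G)$ is a single conjugacy class of $G$, where $M/\Center(G)$ is the unique minimal normal subgroup of $H$.

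\textbf{Step 2: $p = 2$ and $H$ is a Gagola $2$-group.} By Theorem~\ref{gagola}~(f), the nonidentity elements of $M/\Center(G)$ form a single $H$-conjugacy class. Its size is therefore $|M/\Center(G)| - 1 = p^a - 1$ for some $a \ge 1$. Since $H$ is a $p$-group, every class size in $H$ is a power of $p$. Hence $p^a - 1$ is a power of $p$, which forces $p = 2$ and $a = 1$, exactly as in the proof of Corollary~\ref{conj p groups}. Consequently $H$ is a Gagola $2$-group.

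\textbf{Step 3: $G$ is a $2$-group.} Since $G/\Center(G)$ is a $2$-group, $G$ is nilpotent, so $G = P \times Q$ with $P$ the Sylow $2$-subgroup and $Q$ the Hall $2'$-subgroup. By Lemma~\ref{lem: center}, either $P$ or $Q$ is trivial. If $P = 1$, then $G = Q$ has odd order, so $H$ is simultaneously a $2$-group and of odd order, hence trivial. This contradicts $g \notin \Center(G)$. Therefore $Q = 1$ and $G$ is a $2$-group.

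As an alternative to Step~3, one can argue directly. Theorem~\ref{detailedThm7} shows that $|\Centralizer_G(g)| = |\Centralizer_H(g\Center(G))|$ is a power of $H$'s prime, which is $2$. Since $\Center(G) \le \Centralizer_G(g)$, the centre $\Center(G)$ is a $2$-group, and so $G$ is a $2$-group.

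I expect the main subtlety to be Step~1, namely ensuring that the nonlinearity hypothesis of Theorem~\ref{detailedThm7} is available. The remaining steps are routine applications of earlier results.
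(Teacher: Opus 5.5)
Your argument is correct and is essentially the paper's: Theorem~\ref{detailedThm7} makes $H$ a Gagola group, the single class $(M/\Center(G))\setminus\{1\}$ of size $p^a-1$ in the $p$-group $H$ forces $p=2$ (the paper phrases this as $\Center(H)\neq 1$ forcing $|\Center(H)|=2$), and nilpotency plus Lemma~\ref{lem: center} removes the odd part of $G$. Your alternative, that $|\Center(G)|$ divides $|\Centralizer_G(g)|=|\Centralizer_H(g\Center(G))|$, also works and bypasses that lemma.

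The nonlinearity issue you flag in Step~1 is covered in the paper only by its standing convention, but it is in fact vacuous. A linear $\chi$ vanishes nowhere, so $G=\Center(G)\cup g^G$ and $|G|-|\Center(G)|=|G:\Centralizer_G(g)|\le |G|/(2|\Center(G)|)$. This forces $|G:\Center(G)|\le 2$, so $G$ is abelian, a contradiction.
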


\begin{proof} The fact that $H$ is a Gagola $2$-group is explained in the first two paragraphs of this section. In particular, $G/{\rm Z}(G)=H$ being nilpotent, we have that $G$ is nilpotent as well. Now, for any odd prime $p$, the Sylow $p$-subgroup $P$ of $G$ must be central; but then $P$ is a direct factor of $G$, which implies $P=1$ by Lemma~\ref{lem: center}.
\end{proof}

As a (very) partial converse, we also get:

\begin{prop} \label{lemma:central ext}
Let $H$ be a Gagola $2$-group and let $G$ be a group such that $G/\Center (G) \cong H$ and $|\Center (G)| = 2$.   Then $G$ has an irreducible character that vanishes on all but one noncentral conjugacy class.
\end{prop}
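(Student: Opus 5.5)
The plan is to verify Condition $(3)$ of Theorem~\ref{detailedThm7} for $G$ and then invoke the equivalence $(3)\Rightarrow(1)$ of that theorem. Set $Z=\Center(G)$, so $|Z|=2$, and let $\chi$ be a Gagola character of $G/Z\cong H$, viewed as a character of $G$ with $Z\le\kernel(\chi)$. By our definition a Gagola character is nonlinear, so $\chi$ is a nonlinear irreducible character of $G$, as Theorem~\ref{detailedThm7} requires. It then remains to check the second half of Condition $(3)$.

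First I identify the relevant element $g$. Since $H$ is a Gagola $2$-group, Theorem~\ref{gagola} gives that $H$ has a unique minimal normal subgroup. It is elementary abelian and its nonidentity elements form a single $H$-class. Because $H$ is a nontrivial $2$-group, this subgroup is $\Center(H)$ and has order $2$, as noted at the start of this section. Write it as $M/Z$ and pick $g\in M\setminus Z$. Then $gZ$ is the unique nontrivial element of $\Center(G/Z)$, and it is the representative of the nontrivial class on which $\chi$ does not vanish (Theorem~\ref{gagola}(e)). The element $g$ is noncentral in $G$: otherwise $M\le\Center(G)=Z$, which is impossible since $|M|=4>|Z|$.

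Next I show that every $\psi\in\irr{G\mid Z}$ vanishes on $g$. Let $z$ be the nonidentity element of $Z$. Since $gZ$ is central in $G/Z$, we have $[g,x]\in Z$ for all $x\in G$. Since $g\notin\Center(G)$, some $x$ has $[g,x]\ne 1$, so $g^x=gz$. Because $Z$ is central and $Z\not\le\kernel(\psi)$, we have $\psi_Z=\psi(1)\lambda$ with $\lambda$ the nontrivial linear character of $Z$, so $\lambda(z)=-1$. By Schur's lemma $z$ acts as the scalar $-1$ in a representation affording $\psi$. Hence $\psi(g)=\psi(g^x)=\psi(gz)=-\psi(g)$, giving $\psi(g)=0$.

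Thus Condition $(3)$ of Theorem~\ref{detailedThm7} holds. That theorem then yields that $\chi$ vanishes on all noncentral conjugacy classes of $G$ except $g^G$, which is the claim. No step here looks like a real obstacle. The only point needing care is the noncentrality of $g$, which rests entirely on the hypothesis $|\Center(G)|=2$.
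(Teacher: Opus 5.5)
Your proof is correct, but it takes a more roundabout route than the paper.

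The paper argues directly. Since $|\Center(H)|=2$, the second center $\mathrm{Z}_2(G)$ has order $4$. Any $g\in \mathrm{Z}_2(G)\setminus\Center(G)$ is noncentral, so $g^G=\mathrm{Z}_2(G)\setminus\Center(G)$. The inflated Gagola character vanishes on every class of $G$ whose image lies outside $\Center(H)$. The only noncentral class of $G$ mapping into $\Center(H)$ is $g^G$, and that finishes the proof.

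Your observation $g^x=gz$ is exactly the fact $g^G=\{g,gz\}=M\setminus Z$, so you could have concluded the same way at that point. Instead you use it to show that every $\psi\in\irr{G\mid Z}$ vanishes on $g$, and then invoke $(3)\Rightarrow(1)$ of Theorem~\ref{detailedThm7}. That implication is proved via Theorem~\ref{mutual 1} and Lemma~3 of \cite{flat}, which essentially turn the vanishing back into the statement that $g$ is conjugate to all of $gZ$, which you had already shown. Your route is logically valid. It has the small advantage of placing $G$ explicitly in the setting of Theorem~\ref{detailedThm7}, so that theorem's further conclusions apply at once. It does, however, rest on much heavier machinery than needed, while the paper's argument is self-contained.

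One small correction to your closing remark. The noncentrality of $g$ is immediate from $g\notin Z=\Center(G)$ and needs no argument. The hypothesis $|\Center(G)|=2$ is really used in the step $g^x=gz$: any nontrivial commutator $[g,x]\in Z$ must equal $z$. With a larger center you would only get $g^G\subseteq gZ$, possibly a proper subset.
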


\begin{proof}
Let ${\rm Z}_2(G)$ denote the second center of $G$, that is, the subgroup of $G$ containing ${\rm Z}(G)$ and such that ${\rm Z}_2(G)/{\rm Z}(G)={\rm Z}(G/{\rm Z}(G))$. Since $H$ is a $2$-group with a Gagola character, we know that $|\Center (H)| = 2$, thus $|{\rm Z}_2 (G)/\Center (G)| = 2$.  This implies that $|{\rm Z}_2 (G)| = 4$ and $|{\rm Z}_2 (G) \setminus \Center (G)| = 2$. If $g \in {\rm Z}_2 (G) \setminus \Center (G)$, then $g$ is not central in $G$, hence $|g^G|> 1$ and this forces $g^G$ to be the whole set $Z_2 (G) \setminus Z(G)$. Now, let $\chi$ be the Gagola character of $H$, and denote by $\hat\chi$ the character of $G$ obtained from $\chi$ by inflation. Since $\chi$ vanishes on all the noncentral conjugacy classes of $H$, it follows that $\hat\chi$ vanishes on all the noncentral conjugacy classes of $G$ except $g^G$. This proves the claim.  
\end{proof}

It is not difficult to see that the only groups $G$ of order $16$ with $|\Center (G)| = 2$ are the dihedral group, the generalized quaternion group, and the semi-dihedral group. In all cases, $H=G/\Center (G)$ is isomorphic to the dihedral group of order $8$ and it has a Gagola character. Thus, when $|H| = 8$, the only group with a Gagola character that has a central extension whose center has order $2$ is the dihedral group of order $8$, which has three such central extensions.   

Similarly, checking groups of order $32$ or $128$ with {\texttt{MAGMA}}, we find that for both orders there is also only one Gagola group $H$ that has a central extension whose center has order~$2$ (in both cases, $H$ is not an extra-special $2$-group). Again using {\texttt{MAGMA}}, in the two cases we obtain respectively $6$ central extensions of order $64$ and $8$ central extensions of order $256$.  On the other hand, we have found $64$ Gagola groups of order $512$ each having at least two central extension whose center has order $2$, hence, at least $128$ groups of order $1024$ as in Proposition~\ref{lemma:central ext}. We note that for the orders $16$, $64$ and $256$, all of the central extensions $G$ as in Proposition~\ref{lemma:central ext} for each order have the same character degree multi-set. On the other hand, for order $1024$ we see that there are at least $29$ different multi-sets that occur as character degree sets of the relevant central extensions.

\section{Central extensions Gagola groups: Frobenius groups} \label{examples}

The other special case of a Gagola group $H$ we wish to consider is when $H$ is not a group of prime-power order, and the unique minimal normal subgroup $N$ of $H$ is a Sylow $p$-subgroup of $H$ for a suitable prime $p$; in view of Lemma~\ref{gagola 1}, $(H,N)$ is then a Camina pair whose Camina kernel $N$ has an order coprime to the index $|H:N|$, which implies that $H$ is a Frobenius group with Frobenius kernel $N$.  In fact, by Theorem 6.2 of \cite{Gagola}, we have $N = {\rm O}_p (H)$ if and only if $H$ is a doubly transitive Frobenius group. 
Theorem~\ref{detailedThm7} yields that, for a group $G$ satisfying the equivalent conditions of that theorem and such that $H=G/{\rm Z}(G)$ is as above, the center ${\rm Z}(G)$ is a $p$-group: we now show that, in this situation and with the extra assumption that $|{\rm Z}(G)|=p$, the structure of $G$ is very limited.

\begin{thm} \label{th: semi-direct prod}
Let $G$ be a group having a nonlinear irreducible character that vanishes on all but one noncentral conjugacy class of $G$. Assume that $G/\Center (G)$ is a doubly transitive Frobenius group and, denoting by $N/{\rm Z}(G)$ its Frobenius kernel, let $p$ be the prime divisor of $|N/{\rm Z}(G)|$. If $|\Center (G)| = p$, then $G$ is a semi-direct product of a group $X$ acting on $N$ and one of the following occurs:
\begin{enumerate}[{\rm (a)}]
\item  $p=2$, $N$ is isomorphic to $Q_8$, $X$ has order $3$, and $G$ is isomorphic to ${\rm SL}_2 (3)$.
\item  $p$ is odd, and $N$ is an extra-special $p$-group of exponent $p$ and order $p^{2a+1}$ for some positive integer $a$. Also, $X$ is a $p'$-group that centralizes $\Center (N)$ and is isomorphic to a subgroup of ${\rm Sp}_{2a} (p)$.  
\end{enumerate}   

\end{thm}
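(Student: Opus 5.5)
Set $Z=\Center(G)$, $H=G/Z$, and let $N/Z$ be the Frobenius kernel of $H$. Since $H$ is a doubly transitive Frobenius group, $N/Z$ is elementary abelian of order $p^n$ and a complement $XZ/Z$ is cyclic of order $p^n-1$. It acts regularly on $(N/Z)\setminus\{1\}$, which is the set of nontrivial elements of the unique minimal normal subgroup of $H$. By Theorem~\ref{detailedThm7}, $N\setminus Z$ is a single $G$-class and $|\Centralizer_G(g)|=|\Centralizer_H(gZ)|=p^n$ for $g\in N\setminus Z$. Also $|N|=p^{n+1}$, so $N$ is a Sylow $p$-subgroup of $G$ and $Z\le N$.

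First I get the semidirect product. By Schur--Zassenhaus applied to the normal Hall subgroup $N$, we have $G=N\rtimes X$ with $X\cong G/N$ cyclic of order $p^n-1$. Because $Z$ is central, $X$ centralizes $Z=\Center(N)\cap\ldots$. I will check that $Z=\Center(N)$: if some $g\in N\setminus Z$ were central in $N$, then $N\le\Centralizer_G(g)$, which forces $\Centralizer_G(g)=N$ of order $p^n$, contradicting $|N|=p^{n+1}$. Hence $\Center(N)=Z$ has order $p$. Next, $N$ is nonabelian: otherwise $|\Centralizer_G(g)|\ge|N|>p^n$. The Camina condition from Theorem~\ref{standard conj} gives $g^G=gZ\cdot(\text{all cosets})$, and since $|g^N|\le |N:\Centralizer_N(g)|=p$ while $N'\le Z$, we get $N'=Z=\Center(N)$. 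We also need $\Phi(N)=Z$. The group $X$ acts transitively on $(N/Z)^\#$, so $\Phi(N)/Z$ is an $X$-invariant proper subgroup, hence trivial. Therefore $N$ is extra-special of order $p^{1+2a}$ with $n=2a$, and $X$ embeds in $\mathrm{Sp}_{2a}(p)$ via its action on $N/Z$ preserving the commutator form. It embeds faithfully because $\Centralizer_X(N)\le\Centralizer_G(g)\cap X=1$.

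For $p$ odd I must show that $N$ has exponent $p$. The $p$-th power map $N\to Z$ is a homomorphism for odd $p$ and class $2$, and it factors through $N/Z$. It is $X$-equivariant, with $X$ acting trivially on $Z$. Its kernel is therefore an $X$-invariant subgroup of $N/Z$, hence $1$ or everything. It cannot be $1$, because a nonzero linear map $(\mathbb F_p)^{2a}\to\mathbb F_p$ has a kernel of dimension $2a-1\ge1$. Hence the exponent is $p$.

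For $p=2$ I must pin down $N$. The squaring map $N/Z\to Z$ is the quadratic form associated with the form. It is $X$-invariant, and $X$ is transitive on nonzero vectors. So the form is either identically $0$, which is impossible for a nondegenerate quadratic form, or it takes the value $1$ on all nonzero vectors. The latter forces all non-central elements to have order $4$, so $N$ has a unique involution. Being extra-special, $N$ must then be $Q_8$. Hence $a=1$, $|X|=3$, and $G=Q_8\rtimes C_3\cong \mathrm{SL}_2(3)$.

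I expect the main obstacle to be the careful verification that $\Center(N)=Z$ and $N'=Z$. This is where the centralizer equality from Theorem~\ref{detailedThm7} must be used precisely. If the direct argument falters, I will instead use the Camina pair property of $(G,Z)$ to obtain $[g,N]=Z$.
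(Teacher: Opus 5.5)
Your argument is correct, and for the key step (showing that $N$ is extra-special) it takes a different and more elementary route than the paper.

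\textbf{How the two routes differ.} The paper argues character-theoretically:
\begin{enumerate}[(i)]
\item If a nonprincipal $\lambda\in\Irr(\Center(G))$ extended to $N$, Gallagher's theorem and Glauberman's lemma would give an $X$-invariant extension.
\item By coprimality this extends to a linear character of $G$, which cannot vanish at $g$, contradicting Theorem~\ref{detailedThm7}(3).
\item Hence every such $\lambda$ is fully ramified over $N/\Center(G)$ (Isaacs, Problem 6.12), and a quoted result from Huppert then yields that $N$ is extra-special.
\end{enumerate}
You use only the numerical consequence $|\Centralizer_G(g)|=|\Centralizer_{G/\Center(G)}(g\Center(G))|=p^n<|N|$ of Theorem~\ref{detailedThm7}. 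This immediately gives $\Center(N)=\Center(G)$. Since $N/\Center(G)$ is elementary abelian and $|\Center(G)|=p$, you then get $N'=\Phi(N)=\Center(N)$ of order $p$ essentially for free. Your $X$-invariance argument for $\Phi(N)$ is not even needed. The character-theoretic input is hidden in the second-orthogonality argument behind the centralizer equality, so you avoid Clifford theory and the external citation entirely.

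For the exponent and the case $p=2$, you use $X$-equivariance of the power map together with transitivity of $X$ on $(N/\Center(G))\setminus\{1\}$. The paper instead uses that $N\setminus\Center(G)$ is a single $G$-class; the two amount to the same thing. Your quadratic-form phrasing for $p=2$ (the squaring form is nondegenerate, hence not identically zero, hence anisotropic) is a clean way to reach $Q_8$.

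\textbf{Two things to fix.}
\begin{enumerate}[(1)]
\item A doubly transitive Frobenius group has complements of order $p^n-1$, but they need not be cyclic. The paper's own examples $E_3\rtimes Q_8$, $E_5\rtimes\mathrm{SL}_2(3)$ and $E_{11}\rtimes\mathrm{SL}_2(5)$ have noncyclic $X$. In fact, for $p$ odd a cyclic group of order $p^{2a}-1$ acting on $\mathbb{F}_p^{2a}$ is generated by a Singer cycle. Its eigenvalues are not closed under inversion, so it does not lie in $\mathrm{Sp}_{2a}(p)$, and your claim would make case (b) empty. You never use cyclicity, so simply delete it.
\item The fact $\Centralizer_X(N)=1$ shows that $X$ acts faithfully on $N$. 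The embedding into $\mathrm{Sp}_{2a}(p)$, however, needs faithfulness on $N/\Center(G)$. This follows at once because $X\cong X\Center(G)/\Center(G)$ is a Frobenius complement acting fixed-point-freely on $N/\Center(G)$. Alternatively, use coprime action: $\Centralizer_X(N/\Center(G))=\Centralizer_X(N)$, since $X$ centralizes $\Center(G)$.
\end{enumerate}
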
 

\begin{proof}
Note that, $G/{\rm Z}(G)$ being a doubly transitive Frobenius group, $N/{\rm Z}(G)$ is a chief factor of $G$ (hence, an elementary abelian $p$-group). Also, $G/N$ is a $p'$-group, so $N$ is a Sylow $p$-subgroup of $G$.  Because $N$ is normal, we may use the Schur-Zassenhaus theorem to see that $G$ has a Hall $p$-complement $X$.  
Let $\lambda$ be a nonprincipal irreducible character of $\Center (G)$.  We see that $X$ acts on the irreducible constituents of $\lambda^N$ and $X$ acts on the irreducible characters of $N/\Center (G)$.  If $\lambda$ extends to $N$, then by Gallagher's theorem (see Corollary 6.17 of \cite{Isaacs}), the linear characters of $N/\Center (G)$ act transitively by 
multiplication on the (linear) irreducible constituents of $\lambda^N$ and we may apply Glauberman's lemma (see Lemma 13.8 of \cite{Isaacs}) to see that $\lambda$ has an $X$-invariant extension $\theta$ to $N$.  Since $|G:N|$ and $|N|$ are coprime, we may use Corollary 8.16 of \cite{Isaacs} to see that $\theta$ has an extension $\psi\in\irr G$.  Observe that $\psi$ is an extension of $\lambda$ and so $\psi$ is linear.  This is a contradiction to Theorem \ref{detailedThm7} (3) which states that $\psi$ should have zeros in $N$, 
but since $\psi$ is linear, it does not vanish anywhere.  Therefore, $\lambda$ does not extend to $N$ and, in particular, $N$ is not abelian.
	
Recalling that $N/\Center (G)$ is a chief factor for $G$, by Problem 6.12 of \cite{Isaacs} we see that every nonprincipal irreducible character of $\Center (G)$ either extends to $N$, induces irreducibly to $N$, or is fully ramified with respect to $N/\Center (G)$.  Since the irreducible characters of $\Center (G)$ are invariant in $N$, they do not induce irreducibly, and we showed in the previous paragraph that they do not extend.  Thus, they are all fully-ramified.  Applying Proposition 7.7 of \cite{Hu}, this implies that $N$ is an extra-special $p$-group.  By Theorem~\ref{detailedThm7}, we have that the elements in $N \setminus \Center (G)$ form a conjugacy class of $G$.  

Now, assume that $p = 2$.  Since $N$ is nonabelian, we know that $N$ does not have exponent $2$.  On the other hand, since $|\Center (G)| = 2$, we see that every element in $N \setminus \Center (G)$ has order $4$.  Since the only extra-special group with a unique element of order $2$ is the quaternion group of order $8$ (see Satz III.13.8 of \cite{hup}) and $G/\Center (G)$ is a doubly transitive Frobenius group, we have that $|G/N| = |X| = 3$.  This implies that $G$ is isomorphic to ${\rm SL}_2 (3)$, as desired (see Theorem 19.10 of \cite{passman}). 
	
Assume then $p\neq 2$.  We know that $|N| = p^{2a+1}$ for a positive integer $a$.  Suppose $N$ has exponent $p^2$.  Since $N$ is extra-special, 
$N'$ is central and both $N/\Center (N)$ and $\Center (N)$ are elementary abelian.  It is not difficult to see that the map $n \mapsto n^p$ is a surjective homomorphism from $N$ to $\Center (N)$.  Clearly, the kernel of this homomorphism has index $p$ and contains $\Center (N)$.   It follows that $N \setminus \Center (N)$ contains elements of both order $p$ and $p^2$.  In other words, it is not possible for all elements of $N \setminus \Center (G)$ to be conjugate in this case.  Thus, $N$ must have exponent $p$.  We know that $X$ centralizes every element in $\Center (N)=\Center (G)$, and hence it is isomorphic to a subgroup of ${\rm Sp}_{2a} (p)$ (see \cite{winter}). 
\end{proof}

Using {\texttt{MAGMA}}, it is possible to construct groups as in the previous theorem in which the pairs $(|N|, X)$ are the following: $(3^3, Q_8)$, $(5^3, {\rm SL}_2 (3))$, and $(11^3,{\rm SL}_2 (5))$. We will prove that these are the only groups of this kind with $p$ odd and $a = 1$. In the following statement, we denote by $E_p$ an extra-special $p$-group of order $p^3$ and exponent~$p$.

\begin{thm}\label{doubly-transtive p^2}
Let $G$ be a group having a nonlinear irreducible character that vanishes on all but one noncentral conjugacy class of $G$. Assume that $G/\Center (G)$ is a doubly transitive Frobenius group and, denoting by $N/\Center(G)$ its Frobenius kernel, let $|N/\Center (G)|=p^2$ for a prime $p$. If $|\Center(G)|=p$, then $G$ is one of the following:
\begin{enumerate}[{\rm (a)}]
\item ${\rm SL}_2 (3)$.
\item $E_3 \rtimes Q_8$.
\item $E_5 \rtimes {\rm SL}_2 (3)$.
\item $E_{11} \rtimes {\rm SL}_2 (5)$. 
\end{enumerate}	
\end{thm}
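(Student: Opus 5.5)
The plan is to specialize Theorem~\ref{th: semi-direct prod} to $|N/\Center(G)|=p^2$, which means $a=1$, and then reduce everything to a question about subgroups of ${\rm SL}_2(p)$.

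\emph{Case $p=2$.} Theorem~\ref{th: semi-direct prod}(a) already gives $G\cong {\rm SL}_2(3)$, which is case (a).

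\emph{Case $p$ odd.} Theorem~\ref{th: semi-direct prod}(b) gives $G=N\rtimes X$. Here $N\cong E_p$, and $X$ is a $p'$-group centralizing $\Center(N)=\Center(G)$. Moreover $X$ embeds in ${\rm Sp}_2(p)={\rm SL}_2(p)$ through its action on $N/\Center(G)\cong \mathbb{F}_p^2$. Since $G/\Center(G)$ is a doubly transitive Frobenius group with kernel $N/\Center(G)$ and complement isomorphic to $X$, the group $X$ acts regularly on the nonzero vectors of $\mathbb{F}_p^2$. Hence $|X|=p^2-1$.

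The key step is to classify the $p'$-subgroups $X\le {\rm SL}_2(p)$ with $|X|=p^2-1$. For this I would use Dickson's list. A $p'$-subgroup of ${\rm SL}_2(p)$ is one of the following:
\begin{itemize}
\item cyclic, of order dividing $p\pm 1$;
\item binary dihedral (dicyclic), of order $4n$ with $2n\mid p\pm1$, so of order at most $2(p+1)$;
\item ${\rm SL}_2(3)$, of order $24$;
\item the binary octahedral group, of order $48$, which occurs only when $p\equiv\pm1 \pmod 8$;
\item ${\rm SL}_2(5)$, of order $120$, which occurs only when $p\equiv\pm1\pmod{10}$.
\end{itemize}
Comparing orders with $p^2-1$:
\begin{itemize}
\item Cyclic subgroups are impossible, since $p+1<p^2-1$ for odd $p$.
\item A binary dihedral group forces $p^2-1\le 2(p+1)$, so $p=3$ and $X\cong Q_8$.
\item $|X|=24$ forces $p=5$ and $X\cong{\rm SL}_2(3)$.
\item $|X|=120$ forces $p=11$ and $X\cong{\rm SL}_2(5)$.
\end{itemize}
These give cases (b)--(d).

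Existence in these three cases follows from a general argument. In ${\rm SL}_2(p)$, a nonidentity element fixing a nonzero vector is unipotent, so it has order $p$. Therefore every $p'$-subgroup acts fixed-point-freely, and a $p'$-subgroup of order $p^2-1$ automatically acts regularly on nonzero vectors. It then remains to check that the resulting group satisfies the hypotheses. Take $g\in N\setminus\Center(G)$.
\begin{itemize}
\item $N\setminus\Center(G)$ is a single class: $N$ is transitive on each coset $n\Center(G)$ because $N$ is extra-special, and $X$ is transitive on the nontrivial cosets.
\item Each $\psi\in\irr{G\mid \Center(G)}$ lies over a fully ramified $\theta\in\irr N$, which vanishes on $N\setminus \Center(N)$. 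Coprimality lets $\theta$ extend to $G$, and Gallagher's theorem describes $\psi$. Since $\psi_N$ is a multiple of $\theta$, we get $\psi(g)=0$.
\end{itemize}
Then Theorem~\ref{detailedThm7}(3) applies, and one inflates the Gagola character of $G/\Center(G)$.

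I expect the main obstacle to be the order $48$ case. Here $p^2-1=48$ gives $p=7$, and $7\equiv -1\pmod 8$, so ${\rm SL}_2(7)$ does contain a binary octahedral subgroup $2.S_4$ of order $48$. By the fixed-point-free argument above, it seems to act regularly on $\mathbb{F}_7^2\setminus\{0\}$; this matches the exceptional Zassenhaus nearfield of order $49$. My first step would be to verify this case directly, for instance with \texttt{MAGMA}. If it survives, $E_7\rtimes 2.S_4$ would have to be added to the list. If it fails, I would look for the obstruction in the extension of $\theta$ to $G$ and in the character values on $g$, or in whether $X$ really centralizes $\Center(N)$ inside ${\rm Sp}_2(7)$. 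In either event, this case has to be settled explicitly rather than by the order count.
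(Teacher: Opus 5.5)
The case you flag is a genuine counterexample, so the statement as written is false and no proof of it can be completed. Your own existence argument already settles this.

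\textbf{Comparison with the paper.} Your reduction is the paper's: apply Theorem~\ref{th: semi-direct prod} with $a=1$, embed $X\hookrightarrow {\rm Sp}_2(p)={\rm SL}_2(p)$, then use Dickson's list. The paper only extracts $|X|\le 120$, which tacitly needs $|X|=p^2-1$ to dispose of the cyclic and binary dihedral cases. From this it gets $p\le 11$ and hands the remaining primes to a \texttt{MAGMA} search, which it reports as yielding only $p=3,5,11$. You instead compare orders by hand with $|X|=p^2-1$. That is the better route, and it is exactly what exposes the missing case at $p=7$.

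\textbf{The counterexample.} Let $B\le{\rm SL}_2(7)$ be the binary octahedral group, the preimage of an $S_4\le{\rm PSL}_2(7)$. It is a $7'$-group of order $48=7^2-1$. By your unipotent-fixed-point remark it therefore acts regularly on $\mathbb{F}_7^2\setminus\{0\}$.

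For odd $p$, the action of ${\rm Sp}_2(p)$ lifts to automorphisms of $E_p$ centralizing $\mathrm{Z}(E_p)$. In the model $(v,s)(w,t)=(v+w,\,s+t+\tfrac12\omega(v,w))$, let $A$ act by $(v,s)\mapsto(Av,s)$. Your existence paragraph uses this lifting tacitly, and you should state it.

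Put $G=E_7\rtimes B$, of order $7^3\cdot 48$. Then:
\begin{enumerate}
\item $G/\mathrm{Z}(E_7)\cong\mathbb{F}_7^2\rtimes B$ is a doubly transitive Frobenius group. Its center is trivial and $B$ centralizes $\mathrm{Z}(E_7)$, so $\mathrm{Z}(G)=\mathrm{Z}(E_7)$ has order $7$.
\item For $g\in E_7\setminus\mathrm{Z}(G)$, the Frobenius property gives $\mathrm{C}_G(g)=\mathrm{C}_{E_7}(g)$, of order $49$. Hence $g^G=E_7\setminus\mathrm{Z}(G)$.
\item The inflated Gagola character of degree $48$ equals $-1$ on $g^G$ and vanishes off $E_7$.
\end{enumerate}
Thus $G$ satisfies every hypothesis but is none of (a)--(d). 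This is a fifth group, and it is consistent with Theorem~\ref{th: semi-direct prod}(b). The omission is in the paper's computer check at $p=7$.

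The correct conclusion of your otherwise complete argument is a list of five groups, with $E_7\rtimes B$ added. State that outright rather than leaving it conditional on a further check; there is no hidden obstruction in the extension of $\theta$ or in the centralizing of $\mathrm{Z}(N)$.
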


\begin{proof}
From Theorem \ref{th: semi-direct prod}, $G$ is either ${\rm SL}_2 (3)$ or $G$ is a semi-direct product of an extra-special $p$-group $N$ of order $p^{2a+1}$ and exponent $p$ acted on by a group $X$ that centralizes $\Center (N)$ and is isomorphic to a subgroup of ${\rm Sp}_{2a} (p)$.   By assumption $a = 1$.  Recall that ${\rm Sp}_2 (p) \cong {\rm SL}_2 (p)$.  Also, we use Dickson's classification of the subgroups of ${\rm PSL}_2 (p)$ which can be found in Hauptsatz II.8.27 of \cite{hup}.  In particular, the classification of these groups includes cyclic groups of order dividing $(p-1)/2$, $p$, or $(p+1)/2$, the dihedral groups of orders dividing $p-1$ or $p+1$, the Frobenius groups whose Frobenius kernels have order $p$ and whose Frobenius complements have orders dividing $(p-1)/2$, groups isomorphic to $S_3$, $A_4$, $S_4$, and $A_5$.  Since $X$ will be the preimage of one of these groups and $p$ does not divide its order, we see that $|X| \le 120$.  This implies that $p \le 11$.   For the primes less than or equal to $11$, we can construct the actual semi-direct products using {\texttt{MAGMA}}. 
\end{proof}

Note that the groups in Theorem \ref{doubly-transtive p^2} (a), (b), and (c) are solvable, but in (d) we obtain a nonsolvable example.

It might be interesting to determine whether the groups in Theorem~\ref{doubly-transtive p^2} are the only groups satisfying the assumptions of Theorem~\ref{th: semi-direct prod} (b) (i.e., whether the positive integer $a$ in Theorem~\ref{th: semi-direct prod} (b) is necessarily $1$). In fact, we believe it is not possible to have groups of this kind where $a>1$; this would be equivalent to showing that, for $a>1$, ${\rm Sp}_{2a} (p)$ has no subgroups of order $p^{2a} - 1$ that are isomorphic to a Frobenius complement.  Unfortunately, we have not been able to prove it at this time.

Finally, let $G$ be a group such that $H=G/\Center(G)$ is a Gagola group, let $N$ be the unique minimal normal subgroup of $H$, and denote by $p$ be the prime divisor of $|N|$. The ``intermediate case" between the situations considered in Section 5 and in this section is when $N< {\rm O}_p(H)<H$; apart from the fact that $\Center(G)$ is a $p$-group by Theorem~\ref{detailedThm7}, at this point we have not found any other general restrictions on what can occur in this case, and we leave this for future investigation.

\end{document}